\def\namedlabel#1#2{\begingroup
	#2%
	\def\@currentlabel{#2}%
	\phantomsection\label{#1}\endgroup}
\newcommand\D{\mathbb D}
\newcommand\R{\mathbb R}
\newcommand\T{\mathbb T}
\newcommand\BB{\mathcal B}
\newcommand\DD{\mathcal D}
\newcommand\LL{\mathcal L}
\newcommand\PP[1]{\textnormal{\ref{P#1}}}
\DeclareMathOperator*{\esssup}{ess\,sup}
\DeclareMathOperator*{\essinf}{ess\,inf}
\theoremstyle{plain}
\newtheorem{theorem}{Theorem}[section]
\newtheorem{proposition}[theorem]{Proposition}
\newtheorem{lemma}[theorem]{Lemma}
\newtheorem{corollary}[theorem]{Corollary}
\theoremstyle{definition}
\newtheorem{definition}[theorem]{Definition}
\newtheorem{example}[theorem]{Example}
\theoremstyle{remark}
\newtheorem{remark}[theorem]{Remark}
\numberwithin{equation}{section}
\begin{document}

\title[Sharpness of a condition for B\'ekoll\'e-Bonami weights]{Sharpness of the side condition in a characterization of B\'ekoll\'e-Bonami weights}

\author[A. C. Goksan]{Alptekin Can Goksan}
\address{Department of Mathematics, University of Toronto, Toronto, Ontario, Canada}
\email{a.goksan@mail.utoronto.ca}

\keywords{B\'ekoll\'e-Bonami weights, reverse H\"older inequality, Muckenhoupt weights, maximal function}
\subjclass[2020]{Primary:\ 42B25, 46E30; Secondary:\ 42B20, 47B38}

\begin{abstract}

We study the sharpness of the side condition in a recent characterization of a limiting class $B_\infty$ of B\'ekoll\'e-Bonami weights by Aleman, Pott and Reguera. This side condition bounds the oscillation of a weight on the top halves of Carleson squares and allows for the development of a rich theory for B\'ekoll\'e-Bonami weights, analogous to that of Muckenhoupt weights. First, we prove that the side condition can essentially be dropped when the weight is radial and monotonic. Then, by means of counterexamples, we show that the side condition is sharp for non-monotonic weights. In addition, we extend the characterization of the $B_\infty$ class so that it includes all twelve $A_\infty$ conditions recently studied by Duoandikoetxea, Mart\'in-Reyes and Ombrosi, and we present a complete picture of the relationships between these twelve conditions for arbitrary weights on the unit disc. Finally, we use our results to prove an analogue of the self-improvement property of Muckenhoupt weights for monotonic B\'ekoll\'e-Bonami weights.

\end{abstract}

\maketitle

\section{Introduction}
\label{sec_intro}

Weighted norm inequalities constitute one of the main branches of harmonic analysis and are closely connected to singular integrals, which play an important role in partial differential equations, operator theory, and other areas. In the early 1970s, Hunt, Muckenhoupt and Wheeden \cite{HMW} characterized the weights $w$ such that the Hilbert transform is bounded on $L^p(w)$, and this led to the definition of the $A_p$ class of weights. Their work was extended by Coifman and Fefferman \cite{CF} to all Calder\'on-Zygmund operators, and a tremendous amount of work has been done on $A_p$ weights since then (see e.g.\ \cite{D}, \cite{G}, and the references therein). In the late 1970s, B\'ekoll\'e and Bonami \cite{BB} characterized the weights $w$ on the unit disc such that the Bergman projection is bounded on $L^p(w)$. These weights are called $B_p$ weights, and their definition closely resembles that of $A_p$ weights (see Definition \ref{def_disc}(b)).

The theory of $A_p$ weights is highly developed, mostly thanks to the reverse H\"older inequality, which was first proved in \cite{CF} and which paves the way for a number of other desirable properties, such as self-improvement to a lower $A_p$ class. Unfortunately, $B_p$ weights do not in general satisfy the reverse H\"older inequality (see Definition \ref{def_disc}(d)), and this has hindered the development of an analogous theory for $B_p$ weights.

Recently, certain subclasses of $B_p$ weights have attracted interest as they allow parts of the $A_p$ theory to be recovered. For example, Borichev \cite{B} has shown that $B_p$ weights of the form $\exp u$, where $u$ is a subharmonic function, satisfy a self-improvement property similar to that of $A_p$ weights. More recently, Aleman, Pott and Reguera \cite{APR1} have identified a broad class of $B_p$ weights which satisfy most of the desirable properties of $A_p$ weights, thus opening the door to a theory as fruitful as the $A_p$ theory. We focus on this class, which consists of weights that are ``almost constant'' on the top halves of Carleson squares, i.e.\ weights $w$ for which there is a constant $C \ge 1$ such that
\begin{equation}
	\label{eq_AC}
	C^{-1} w(z_2) \le w(z_1) \le C w(z_2)
\end{equation}
for every interval $I$ of the unit circle and a.e.\ $z_1, z_2 \in T_I$, where $T_I$ is the top half of the Carleson square $Q_I$ associated with $I$. (See \eqref{eq_Q_I}, \eqref{eq_T_I}, and also Definition \ref{def_AC}.) The following is part of the main theorem in \cite{APR1}:

\begin{theorem}[Aleman, Pott and Reguera, 2017]
	\label{thm_equiv_1}
	Let $w$ be a weight on the unit disc which is almost constant on top halves. Then the following are equivalent:
	\begin{enumerate}[label=\upshape(\alph*)]
		\item $w \in B_p$ for some $1<p<\infty$.
		\item $w$ has the reverse H\"older property.
		\item $w$ has the reverse Jensen property.
		\item $w \in B_\infty$.
	\end{enumerate}
\end{theorem}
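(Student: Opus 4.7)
My plan is to close the loop $(a) \Rightarrow (d) \Rightarrow (b) \Rightarrow (a)$ and then prove $(c) \Leftrightarrow (d)$, exploiting throughout the observation that, by \eqref{eq_AC}, every interval $I$ carries a single value $w_I := \langle w \rangle_{T_I}$ with $w(z) \simeq w_I$ for a.e.\ $z \in T_I$. This collapses the weight to a function on the dyadic tree of Carleson squares, so that $A_\infty$-style arguments can be reproduced verbatim at the cost of a uniform constant coming from the AC hypothesis.

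The easy directions should go as follows. For $(a) \Rightarrow (d)$, I would apply H\"older's inequality to the defining ratio $\langle w \rangle_{Q_I} \langle w^{1-p'} \rangle_{Q_I}^{p-1}$, compare with the log-average, and read off the $B_\infty$ bound; no use of AC is needed. The self-improvement $(b) \Rightarrow (a)$ is the standard H\"older argument transplanted to Carleson squares: from $\langle w^{1+\varepsilon} \rangle_{Q_I}^{1/(1+\varepsilon)} \le C \langle w \rangle_{Q_I}$ one controls $\langle w^{1-p'} \rangle_{Q_I}^{p-1}$ for $p$ large enough. The equivalence $(c) \Leftrightarrow (d)$ is soft: Jensen gives $(d) \Rightarrow (c)$ immediately, and the converse follows from an exponential integrability / John--Nirenberg type bound on $\log w - \langle \log w \rangle_{Q_I}$, which in turn is produced either from (b), once it is in hand, or directly from a stopping-time argument analogous to the one below.

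The core of the argument, and the step I expect to demand the most care, is $(d) \Rightarrow (b)$. I would fix a Carleson square $Q_{I_0}$ and run a Calder\'on--Zygmund decomposition on the dyadic tree, stopping at the maximal subsquares $Q_{I_j}$ where $\langle w \rangle_{Q_{I_j}}$ first doubles $\langle w \rangle_{Q_{I_0}}$. On the top half $T_I$ of any non-stopped $I$, the AC property \eqref{eq_AC} upgrades the averaged bound $\langle w \rangle_{T_I} \le C \langle w \rangle_{Q_I} \le 2C \langle w \rangle_{Q_{I_0}}$ to a pointwise bound $w(z) \lesssim \langle w \rangle_{Q_{I_0}}$ for a.e.\ $z \in T_I$. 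The $B_\infty$ hypothesis, applied at each stopping square, controls the weighted measure of $\bigcup_j Q_{I_j}$ by a fixed fraction of the weighted measure of $Q_{I_0}$; iterating the stopping time then yields exponential decay of $|\{z \in Q_{I_0} : w(z) > \lambda \langle w \rangle_{Q_{I_0}}\}|$ in $\lambda$, and integration against $\lambda^{\varepsilon - 1}$ produces the reverse H\"older inequality for some small $\varepsilon > 0$.

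The main obstacle I foresee is precisely the interplay between averaged and pointwise data: all the natural hypotheses ($B_p$, $B_\infty$, reverse H\"older) are averages over full Carleson squares, whereas the Calder\'on--Zygmund iteration needs pointwise control on the pieces of $Q_{I_0}$ that survive the stopping rule, and those pieces are exactly the top halves of non-stopped intervals. The AC hypothesis is not cosmetic but is the engine converting averaged information on $T_I$ into pointwise information on $T_I$; without it the iteration produces no usable estimate. I would accordingly organize the write-up around the single lemma $w \simeq w_I$ a.e.\ on $T_I$, and check at each step that every averaged inequality invoked is applied on a top half, where \eqref{eq_AC} upgrades it to the pointwise estimate that feeds the next stage of the argument.
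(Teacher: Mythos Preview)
The paper does not itself prove this theorem: it is quoted from \cite{APR1} and invoked as a black box in the proof of Theorem~\ref{thm_equiv_2}, so there is no proof in the present paper to compare against.

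On its own merits, your proposal has two genuine gaps. First, your $(b)\Rightarrow(a)$ is wrong as stated: the reverse H\"older inequality bounds a \emph{positive} moment $\langle w^{1+\varepsilon}\rangle_{Q_I}$, whereas $B_p$ demands control of the \emph{negative} moment $\langle w^{1-p'}\rangle_{Q_I}$, and no H\"older manipulation bridges the two. Indeed, Example~\ref{ex_3} exhibits a weight in $RH$ lying in no $B_p$, so this implication genuinely requires the AC hypothesis and a separate argument (for instance, running your stopping-time machine on $w^{-1}$, which is also in $AC$). Second, you appear to be using a different definition of $B_\infty$ than the paper: here $B_\infty$ is the Fujii--Wilson maximal-function condition of Definition~\ref{def_disc}(e), not a log-average condition. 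With the correct definition, neither ``$(a)\Rightarrow(d)$ by comparing with the log-average'' nor ``Jensen gives $(d)\Rightarrow(c)$ immediately'' parses---Jensen's inequality goes the wrong way for reverse Jensen, and $B_p\Rightarrow B_\infty$ in the Fujii--Wilson sense is the nontrivial chain of Proposition~\ref{prop_arb_1}, not a one-line H\"older estimate. Your sketch of $(d)\Rightarrow(b)$ via Calder\'on--Zygmund stopping, with AC upgrading averages to pointwise bounds on top halves, has the right shape; but with the Fujii--Wilson hypothesis you still owe an explanation of how $\int_{Q_I} M(w1_{Q_I})\le C\,w(Q_I)$ produces the geometric decay of $w(\bigcup_j Q_{I_j})$ that drives the iteration.
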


The two properties which have been omitted here will be addressed separately in Section \ref{sec_further}. The reverse Jensen and $B_\infty$ properties mentioned in Theorem \ref{thm_equiv_1} (see Definition \ref{def_disc}(c) and (e)) are the unit disc analogues of two different definitions of the $A_\infty$ property for weights on $\R^n$. The former definition was introduced independently by Garc\'ia-Cuerva and Rubio de Francia \cite{GR} and Hru\v{s}\v{c}ev \cite{H}, and the latter definition, which involves a maximal function, was introduced by Fujii \cite{F} and also studied by Wilson \cite{W}. It is well-known that the $\R^n$ counterparts of properties (a) - (d) in Theorem \ref{thm_equiv_1} are equivalent (see e.g.\ \cite{GR}*{Section IV.2} and \cite{F}). Let us also mention that the unit disc analogue of the $A_1$ property is called the $B_1$ property (see Definition \ref{def_disc}(a)). When we say ``unit disc analogue'', we mean that cubes or balls in the definition of the property are replaced with Carleson squares.

The notion of a weight being almost constant on top halves is relatively new in the literature. A version of it seems to have first appeared in \cite{B}, where weights that are constant on the top halves associated with a dyadic grid are considered. The formulation \eqref{eq_AC} is equivalent to a property called bounded hyperbolic oscillation \cite{HKZ}*{Lemma 2.17}, which has recently been studied by Limani and Nicolau \cite{LN} and by Dayan, Llinares and Perfekt \cite{DLP}. In the recent preprint \cite{MP}, Mudarra and Perfekt introduce new side conditions involving dyadic maximal and minimal operators and deduce a nice alternative characterization of properties (a) - (d) in Theorem \ref{thm_equiv_1}. Our viewpoint is different from \cite{MP}, since we focus on conditions on the growth, decay, or oscillation of the weight, as expressed by \eqref{eq_AC}.

In this paper, we address the following natural question: Is the side condition in Theorem \ref{thm_equiv_1} sharp? In other words, can we replace the assumption that $w$ is almost constant on top halves with a weaker assumption on the growth, decay, or oscillation of $w$, and keep the same nice equivalences? Our first result is that, for weights which are radial and monotonic (from the centre of the disc to the boundary), the hypothesis of Theorem \ref{thm_equiv_1} is satisfied, provided that at least one of the equivalent conditions is satisfied, except in one specific case. To be precise, we have the following:

\begin{theorem}
	\label{thm_mono_1}
	Let $w$ be a weight on the unit disc which is radial and satisfies $0 < w(0) < \infty$. Suppose either $w$ is increasing and satisfies one of conditions \textup{(a)} - \textup{(d)} in Theorem \textup{\ref{thm_equiv_1}}, or $w$ is decreasing and satisfies one of conditions \textup{(a)} - \textup{(c)} in Theorem \textup{\ref{thm_equiv_1}}. Then $w$ is almost constant on top halves.
\end{theorem}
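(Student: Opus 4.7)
The plan is to reduce the almost-constant-on-top-halves (ACTH) condition to a one-dimensional inequality on the radial profile of $w$, and to verify this inequality from each hypothesis by testing the given condition on a Carleson square of a carefully chosen radial extent.

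Writing $w(z) = g(\lvert z\rvert)$ for a monotonic profile $g \colon [0,1) \to (0,\infty)$, monotonicity makes the essential supremum and infimum of $w$ on the top half $T_I$ of a Carleson square of radial extent $h$ coincide with $g(1-h)$ and $g(1-h/2)$, in one order or the other. Hence ACTH is equivalent to the uniform bound
\[
\frac{g(1-h/2)}{g(1-h)} \le C \ \text{(increasing case)}, \qquad \frac{g(1-h)}{g(1-h/2)} \le C \ \text{(decreasing case)},
\]
uniformly in $h \in (0,1)$.

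The central idea is to test each hypothesis on a Carleson square $Q$ of doubled radial extent $2h$, so that the radial interval $[1-2h,1]$ splits into disjoint sub-intervals $[1-2h,1-h]$ and $[1-h/2,1]$ on which monotonicity pins $g$ close to $g(1-h)$ and $g(1-h/2)$, respectively. For the $B_p$ case and increasing $g$, restrict $\int_Q w$ to the subregion $\{1-h/2 \le \lvert z \rvert < 1\}$ (where $g \ge g(1-h/2)$) to obtain $\lvert Q\rvert^{-1}\int_Q w \ge c\, g(1-h/2)$, and restrict $\int_Q w^{-1/(p-1)}$ to the subregion $\{1-2h \le \lvert z \rvert \le 1-h\}$ (where $g \le g(1-h)$, so $w^{-1/(p-1)} \ge g(1-h)^{-1/(p-1)}$) to obtain $\lvert Q\rvert^{-1}\int_Q w^{-1/(p-1)} \ge c\, g(1-h)^{-1/(p-1)}$. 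Substituting into the $B_p$ inequality isolates the ratio $g(1-h/2)/g(1-h)$ with a constant depending only on $p$ and $[w]_{B_p}$. The decreasing case is symmetric, with the roles of the two sub-intervals swapping.

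The same doubled-scale testing handles (b) and (c), with $w^s$ and $\log w$ in place of $w^{-1/(p-1)}$: restricting the relevant integrals to $[1-h/2,1]$ and $[1-2h,1-h]$ produces bounds involving $g(1-h/2)$ and $g(1-h)$ in separate factors of the reverse H\"older or reverse Jensen inequality, from which the ratio can then be extracted. For (d) in the increasing case, the Fujii formulation $\int_Q M(w\mathbf{1}_Q) \le C \int_Q w$ is combined with the pointwise lower bound $M(w\mathbf{1}_Q)(z) \ge w(z) \ge g(1-h/2)$ on $\{\lvert z \rvert \ge 1-h/2\} \cap Q$, giving a lower bound on the left-hand side involving $g(1-h/2)$; the monotonicity-based upper bound on $\int_Q w$ coming from the sub-interval $[1-2h,1-h]$ then isolates the ratio. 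In the decreasing case this argument breaks down because $M(w\mathbf{1}_Q)$ is dominated by the large interior values and gives no lower bound linked to the boundary oscillation, which is precisely the obstruction that will underlie the counterexamples constructed later in the paper.

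The main technical point is the choice of scale. Testing any of these conditions on the natural Carleson square $Q_I$ of radial extent $h$ produces only a trivial estimate: once the monotonicity-based bounds are substituted, both factors of the multiplicative inequality collapse to the single value $g(1-h/2)$ and cancel. Doubling the radial extent creates just enough room to place $g(1-h)$ and $g(1-h/2)$ into disjoint sub-intervals whose integrals feed different factors of the inequality, so that their ratio separates cleanly.
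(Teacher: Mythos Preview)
Your doubled-scale test is correct for condition (a): the $B_p$ inequality has one factor large where $w$ is large and another large where $w$ is small, so restricting them to the outer quarter $[1-h/2,1]$ and the inner half $[1-2h,1-h]$ respectively isolates $g(1-h/2)$ and $g(1-h)^{-1}$, and the product bounds the ratio. The argument also handles $h\le 1/2$ only, but Lemma~\ref{lem_AC} fills in the rest.

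The genuine gap is in condition (d), and the same obstruction recurs in (b). For $B_\infty$ with increasing $w$, your lower bound $\int_Q M(w\mathbf 1_Q)\ge c\,|Q|\,g(1-h/2)$ from the outer quarter is no stronger than the trivial bound $\int_Q M(w\mathbf 1_Q)\ge\int_Q w$, because $\int_Q w$ itself already contains that same outer-quarter contribution $\ge c\,|Q|\,g(1-h/2)$. The claimed ``monotonicity-based upper bound on $\int_Q w$ coming from $[1-2h,1-h]$'' does not exist: the full integral $\int_Q w$ is dominated by the \emph{outer} portion, where $g$ is largest, and cannot be bounded above in terms of $g(1-h)$ alone. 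Substituting your two bounds into $\int_Q M(w\mathbf 1_Q)\le C\int_Q w$ therefore yields only $c\,g(1-h/2)\le C\cdot(\text{something}\ge c\,g(1-h/2))$, which is vacuous. The reverse H\"older case has the identical defect, since both sides of $(w^q)_Q^{1/q}\le C\,w_Q$ involve positive powers of $w$: for a two-valued increasing weight equal to $a$ on the inner half and $M$ on the outer half, the RH ratio on $Q$ is $\approx(1-\alpha)^{1/q-1}$ regardless of $M/a$, so the test on a fixed doubled square cannot see the jump.

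The paper sidesteps this by not treating (a)--(d) separately. It first uses Proposition~\ref{prop_arb_1} to reduce every hypothesis to the weakest one --- $B_\infty$ in the increasing case and the Coifman--Fefferman condition \ref{P4} in the decreasing case --- and then proves just those two base cases (Theorem~\ref{thm_mono_2}). For increasing $w$ under $B_\infty$, after the change of variable $f(t)=g(\sqrt{1-t})$ the key step (Lemmas~\ref{lem_P7_1}--\ref{lem_P7_2}) is that if $f$ drops sharply near some point, one tests the Fujii inequality $\int_0^a(\tfrac1y\int_0^y f)\,dy\le C\int_0^a f$ on an interval whose length $a$ is chosen \emph{large depending on the size of the drop}, which produces a logarithmic gain $1+\log(2a)$ on the left; merely doubling the scale gives no such gain. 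For decreasing $w$ under \ref{P4}, the argument (Lemmas~\ref{lem_P4_1}--\ref{lem_P4_2}) subdivides the top half into roughly $2/\alpha$ equal pieces and selects the one on which $f$ jumps the most, which then carries nearly all the mass and violates \ref{P4}; again a single doubling is insufficient.
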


The reader will notice that the case where $w$ is decreasing and satisfies condition (d) is missing from Theorem \ref{thm_mono_1}. It turns out that condition (d) holds automatically for decreasing weights; see Proposition \ref{prop_dec}. Moreover, the assumption in Theorem \ref{thm_mono_1} that $0<w(0)<\infty$ cannot be dropped; see Example \ref{ex_centre}.

Together, Theorems \ref{thm_equiv_1} and \ref{thm_mono_1} imply that the reverse H\"older property, the reverse Jensen property, and the property of belonging to some $B_p$ class with $1<p<\infty$ are equivalent for monotonic radial weights; see Corollary \ref{cor_2}. The two theorems also lead to a self-improvement property for such weights; see Corollary \ref{cor_3}.

Let us briefly describe the key ideas in the proof of Theorem \ref{thm_mono_1}. First, we carry out a change of variables to translate the two-dimensional problem into a one-dimensional problem. Then, we argue by contradiction. Notice that, if a monotonic weight $w$ fails to be almost constant on top halves, then $w$ exhibits arbitrarily large variation (in multiplicative terms) between the inner and outer edges of a top half. However, this does not immediately imply that $w$ is convex enough for one of conditions (a) - (d) to fail. To overcome this difficulty, we either (if $w$ is decreasing) subdivide the top half into small pieces and pick the one on which $w$ varies the most, or (if $w$ is increasing) choose the top half so that it occupies a small portion of the unit disc close to the boundary.

It is natural to ask what happens if the assumption of monotonicity in Theorem \ref{thm_mono_1} is dropped. In other words, if a weight $w$ satisfies one of conditions (a) - (d) in Theorem \ref{thm_equiv_1}, is $w$ necessarily almost constant on top halves? In Section \ref{sec_counter}, we present counterexamples which disprove this and other possible implications between the properties that we have discussed, for weights which are radial but not monotonic. These counterexamples are summarized in Figure \ref{fig_map_1}. In particular, Example \ref{ex_6} shows that Theorem \ref{thm_mono_1} fails completely for non-monotonic weights.

\begin{figure}
	\begin{tikzpicture}[scale=1.125]
		\def \cross {\Large $\times$};
		\tikzset{arrow/.style={-implies, double equal sign distance}};
		\tikzset{txt/.style={font=\Small, align=center}};
		
		\draw (2,8) node {$B_p$};
		\draw (6,8) node {$B_1$};
		\draw (2,5) node {$RJ$};
		\draw (6,5) node {$AC$};
		\draw (2,2) node {$B_\infty$};
		\draw (6,2) node {$RH$};
		
		\draw[arrow] (5.5,8) to (2.5,8);
		\draw[arrow] (2,7.5) to (2,5.5);
		\draw[arrow] (2,4.5) to (2,2.5);
		\draw[arrow] (5.5,2) to (2.5,2);
		
		\draw[arrow, densely dashed] (1.75,5.5) to [bend left=21pt]
		node[txt, left=3pt] {if $w \in AC$} (1.75,7.5);
		\draw[arrow, densely dashed] (1.75,2.5) to [bend left=21pt]
		node[txt, left=3pt] {if $w \in AC$} (1.75,4.5);
		\draw[arrow, densely dashed] (2.5,1.75) to [bend right=15pt]
		node[txt, below=3pt] {if $w \in AC$} (5.5,1.75);
		
		\draw[arrow, densely dashed, thick] (2.5,5) to
		node[txt, above=1pt] {if $w \in DEC$} (5.5,5);
		\draw[arrow, densely dashed, thick] (2.4,2.5) to
		node[txt, pos=2/3, rotate=atan(2.2/2.9), above=1pt] {if $w \in INC$} (5.3,4.7);
		\draw[arrow, densely dashed, thick] (6,2.5) to
		node[txt, rotate=90, above=1pt] {if $w \in DEC$} (6,4.5);
		\draw[thick] (2,2) circle[radius=10pt]
		node[txt, below left=5pt] {automatic \\ if $w \in DEC$};
		
		\draw[arrow, thick] (6,5.5) to node {\cross} (6,7.5);
		\draw[densely dotted, thick] (6,6.5) to (6.75,6.8);
		\draw (7.5,6.8) node[txt, draw, densely dotted, thick] {even if \\ $w \in B_\infty$ \\ {[Ex \ref{ex_9}]}};
		\draw[arrow, thick] (5.4,4.45) to node[pos=1/3, rotate=atan(2.2/2.9)] {\cross}
		node[txt, pos=1/3, right=12pt, below=4.5pt] {[Ex \ref{ex_10}]} (2.5,2.25);
		
		\draw[arrow, thick] (6.25,7.5) to [bend left=15pt] node[pos=2/3, rotate=-7.5] {\cross}
		node[txt, pos=2/3, right=3pt] {[Ex \ref{ex_1}]} (6.25, 2.5);
		\draw[arrow, thick] (5.5,2.25) to [bend left=21pt] node[pos=4/5, rotate=-48] {\cross}
		node[txt, pos=4/5, right=12pt, above=6pt] {[Ex \ref{ex_3}]} (2.25,4.5);
		\draw[arrow, thick] (2.25,5.5) to [bend right=21pt] node {\cross}
		node[txt, right=3pt] {even if \\ $w \in RH$ \\ {[Ex \ref{ex_4}]}} (2.25,7.5);
		\draw[arrow, thick] (5.75,7.5) to [bend right=21pt] node {\cross}
		node[txt, left=3pt] {even if \\ $w \in RH$ \\ {[Ex \ref{ex_6}]}} (5.75,5.5);
		\draw[arrow, thick] (2.5,8.25) to [bend left=15pt] node {\cross}
		node[txt, above=3pt] {even if $w \in RH$ [Ex \ref{ex_7}]} (5.5,8.25);
		
		\matrix[draw, font=\Small, nodes={align=left}, cells={anchor=north west}] at (10.7,5)
		{
			\node {$RJ$}; & \node {reverse Jensen}; \\
			\node {$RH$}; & \node {reverse H\"older}; \\
			\node {$AC$}; & \node {almost constant \\ on top halves}; \\
			\node {$INC$}; & \node {increasing and \\ $0<w(0)<\infty$}; \\
			\node {$DEC$}; & \node {decreasing and \\ $0<w(0)<\infty$}; \\
			\node {}; \\
			\node[right=2pt, below=4pt] (A) {}; \node[right of=A, node distance=30pt] (B) {};
			\draw[arrow] (A) to (B); & \node {true implication};\\
			\node[right=2pt, below=4pt] (A) {}; \node[right of=A, node distance=30pt] (B) {};
			\draw[arrow, dashed] (A) to (B); & \node {conditional \\ implication};\\
			\node[right=2pt, below=4pt] (A) {}; \node[right of=A, node distance=30pt] (B) {};
			\draw[arrow] (A) to (B); \node[right of=A, node distance=13pt] {\cross}; & \node {false implication};\\
		};
	\end{tikzpicture}
	\caption{Relationships between the properties in Definitions \ref{def_disc} and \ref{def_AC}, for arbitrary weights on the unit disc. The label $B_p$ represents $\bigcap_{1<p<\infty} B_p$ in the two implications to its right and $\bigcup_{1<p<\infty} B_p$ in the three implications below it. Thick arrows indicate contributions of this paper.}
	\label{fig_map_1}
\end{figure}
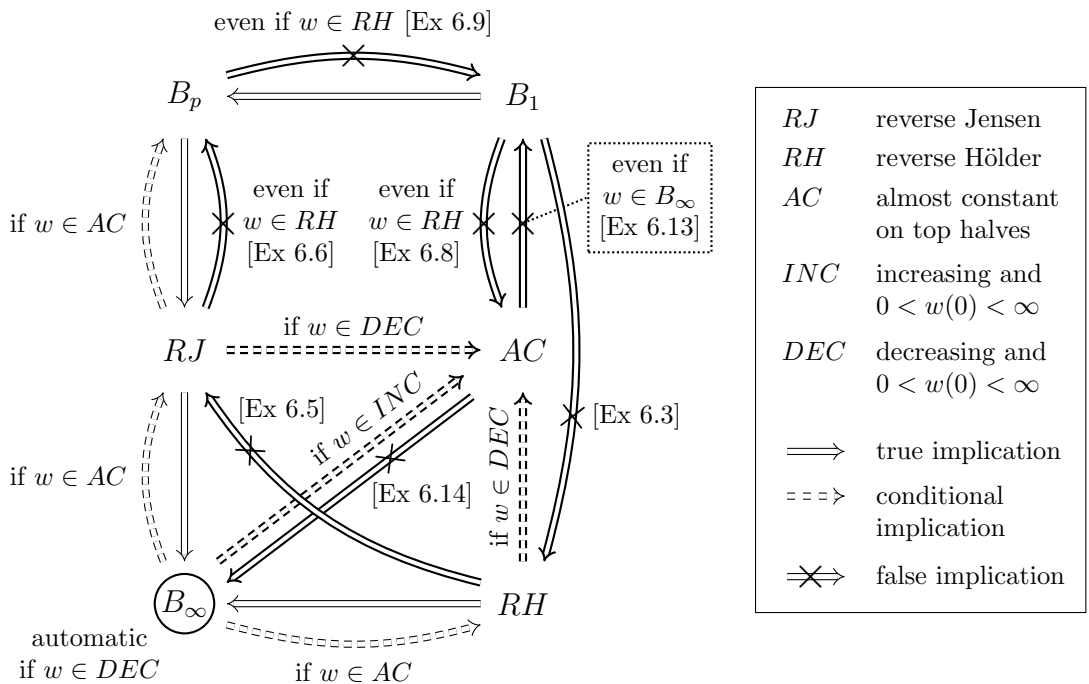

The main construction behind most of our counterexamples is a radial weight which is piecewise constant and oscillates between a fixed value and increasingly larger or smaller values as the boundary of the disc is approached. By judicious choice of the sizes of the exceptional sets and the values of the weight on the exceptional sets, we obtain a wide variety of counterexamples.

Our counterexamples show that the side condition in Theorem \ref{thm_equiv_1} is sharp. Indeed, in Examples \ref{ex_1} - \ref{ex_7} and \ref{ex_8}, the oscillations of the weight (in multiplicative terms) can be chosen to go to infinity as slowly as desired. By definition, a weight is almost constant on top halves if and only if its oscillations on top halves (in multiplicative terms) are bounded. Examples \ref{ex_1}, \ref{ex_3} and \ref{ex_4} show that even the slightest relaxation of this condition causes the equivalences in Theorem \ref{thm_equiv_1} to fail. Therefore, the hypothesis of Theorem \ref{thm_equiv_1} cannot be improved. (See also Remark \ref{rmk_osc}.)

Other conclusions that can be drawn from our counterexamples include the following: Example \ref{ex_9} shows that, even for monotonic weights, the $B_1$ condition cannot be included in the list of equivalent conditions in Theorem \ref{thm_equiv_1}. Example \ref{ex_10} shows that monotonic weights that are almost constant on top halves need not satisfy any of these four conditions.

Together, Theorems \ref{thm_equiv_1} and \ref{thm_mono_1}, Propositions \ref{prop_dec} and \ref{prop_arb_2}, and Examples \ref{ex_1}, \ref{ex_3}, \ref{ex_4}, \ref{ex_6}, \ref{ex_7}, \ref{ex_9} and \ref{ex_10} allow us to construct a complete picture of the relationships between the properties that we have mentioned so far; see Figure \ref{fig_map_1}. In particular, we have the following:

\begin{theorem}
	\label{thm_map_1}
	For arbitrary weights on the unit disc, the truth or falsity of any pairwise implication between the following six properties is given by Figure \textup{\ref{fig_map_1}}: $B_1$, $B_p~(1<p<\infty)$, $B_\infty$, reverse H\"older, reverse Jensen, and almost constant on top halves.
\end{theorem}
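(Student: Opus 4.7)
The plan is to enumerate the $30$ ordered pairs of distinct properties among the six listed, and to classify each pairwise implication as true or false using the four unconditional positive implications depicted as solid arrows in Figure \ref{fig_map_1}, their transitive consequences, and the counterexamples in Section \ref{sec_counter}.

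\emph{Positive implications.} First, I would verify the four solid arrows. The inclusion $B_1 \Rightarrow B_p$ for $p>1$ is immediate from H\"older's inequality applied to the Carleson-square average defining $B_p$. The implications $B_p \Rightarrow RJ$, $RJ \Rightarrow B_\infty$, and $RH \Rightarrow B_\infty$ are classical in the $A_p$ theory, and their proofs (using only Carleson-square averages together with Jensen/H\"older inequalities) carry over to the $B_p$ setting without any use of the almost-constant-on-top-halves hypothesis. Chaining these gives the transitive positive implications $B_1 \Rightarrow RJ,\ B_\infty$ and $B_p \Rightarrow B_\infty$, for seven true implications in total.

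\emph{Negative implications.} Second, I would refute the remaining twenty-three pairs by exploiting the same chain $B_1 \subset B_p \subset RJ \subset B_\infty$, so that each counterexample disposes of several negative pairs at once. Example \ref{ex_1} furnishes a $B_1$ weight not in $RH$, refuting $X \Rightarrow RH$ for $X \in \{B_1, B_p, RJ, B_\infty\}$. Example \ref{ex_3} produces a weight in $RH$ (hence in $B_\infty$) that fails $RJ$ (and therefore also $B_p$ and $B_1$), refuting six implications $X \Rightarrow Y$ with $X \in \{RH, B_\infty\}$ and $Y \in \{RJ, B_p, B_1\}$. Example \ref{ex_6} (a weight in $B_1 \cap RH$ which is not $AC$) refutes $X \Rightarrow AC$ for every $X$ other than $AC$, while Example \ref{ex_10} (an $AC$ weight not in $B_\infty$) refutes $AC \Rightarrow Y$ for every $Y$ other than $AC$. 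Finally, Examples \ref{ex_4}, \ref{ex_7}, and \ref{ex_9} cover the remaining pairs: $RJ \not\Rightarrow B_p$ (and hence $RJ \not\Rightarrow B_1$), $B_p \not\Rightarrow B_1$, and $AC \not\Rightarrow B_1$.

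The real content lies in Theorem \ref{thm_equiv_1} and in the construction of the counterexamples in Section \ref{sec_counter}; what remains for this theorem is bookkeeping. The main obstacle is simply verifying that the seven positive and twenty-three negative cases partition the $30$ ordered pairs, with no overlap and no omission. I would organize this as a $6 \times 6$ table, filling each off-diagonal entry with either a solid-arrow reference, a transitivity deduction, or a counterexample label.
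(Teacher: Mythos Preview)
Your approach is correct and matches the paper's: the theorem is a summary statement whose content lies in Proposition~\ref{prop_arb_2} (for the four solid positive arrows and their transitive consequences) together with the counterexamples in Section~\ref{sec_counter}, and all that remains is the bookkeeping you describe. One small redundancy: your citation of Example~\ref{ex_9} for $AC \not\Rightarrow B_1$ overlaps with what Example~\ref{ex_10} already gives (since $B_1 \subset B_\infty$), so your tally of the ``remaining pairs'' actually lists $24$ rather than $23$; Example~\ref{ex_9} is needed in the figure only to justify the stronger ``even if $w \in B_\infty$'' annotation, not the bare pairwise implication.
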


In fact, even more can be said. Properties (a) - (d) in Theorem \ref{thm_equiv_1} are part of a larger collection of properties which are equivalent for weights on $\R^n$ and are collectively referred to as the $A_\infty$ property. Recently, Duoandikoetxea, Mart\'in-Reyes and Ombrosi \cite{DMO1} carried out a comprehensive study of these properties in the context of a general measure space equipped with a ``basis'', such as the basis of cubes (or balls) in $\R^n$ and the basis of Carleson squares in the unit disc. They came up with an almost complete map of the implications between twelve of these properties, thus greatly clarifying how these properties relate to one another in a very general setting. Their map was later completed by Kosz \cite{K}. We list these twelve properties in Definition \ref{def_P1-P8} as \ref{P1} - \ref{P8} and \ref{P1'} - \ref{P4'}, in keeping with the notation in \cite{DMO1}. It is shown in \cite{DMO1} that \textbf{(P\textit{j}')} is always equivalent to \textbf{(P\textit{j})} for $j=1,\dots,4$, so we mainly focus on \ref{P1} - \ref{P8} in this paper.

The characterization in Theorem \ref{thm_equiv_1} can be extended so that it encompasses all of properties \ref{P1} - \ref{P8}, as follows:

\begin{theorem}
	\label{thm_equiv_2}
	For weights on the unit disc which are almost constant on top halves, properties \textup{\ref{P1}} - \textup{\ref{P8}} are equivalent.
\end{theorem}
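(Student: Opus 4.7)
The plan is to combine Theorem \ref{thm_equiv_1} with the general implication diagram for the twelve $A_\infty$-type properties that Duoandikoetxea, Mart\'in-Reyes and Ombrosi developed on an abstract basis (and that was later completed by Kosz). Theorem \ref{thm_equiv_1} already identifies four of these properties---$B_p$ for some $1<p<\infty$, reverse H\"older, reverse Jensen, and $B_\infty$---as a single equivalence class under the almost-constant-on-top-halves hypothesis, and the goal is to show that the remaining properties collapse into the same class under the same hypothesis.

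First, I would verify that the abstract framework of \cite{DMO1} applies to the basis of Carleson squares on the unit disc. This amounts to checking a Vitali-type covering property and a weak type $(1,1)$ bound for the associated maximal operator, both of which are standard. Consequently, every implication among \textup{(P1)}--\textup{(P8)} proved in \cite{DMO1} and \cite{K} transfers to our setting \emph{without} any side condition, leaving only a handful of ``missing'' arrows to close.

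Second, I would close those missing arrows using the AC hypothesis. The key observation is that under AC, for each interval $I \subset \T$, the essential supremum and essential infimum of $w$ on the top half $T_I$ are comparable, and hence so are the average of $w$ over $T_I$ and any pointwise value of $w$ in $T_I$. Since $|T_I|$ is a fixed fraction of $|Q_I|$, this also yields comparisons between averages of $w$ (or of $w^p$, $\log w$, etc.) over $Q_I$ and over $T_I$. These comparisons allow the classical Euclidean equivalences between the twelve $A_\infty$ conditions to be transferred to the Carleson-square setting, and, together with the reverse H\"older inequality from Theorem \ref{thm_equiv_1}---which becomes available as soon as any one of \textup{(P1)}--\textup{(P8)} holds, via the general diagram---they close each remaining arrow.

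The main obstacle will be the handful of implications that remain open in the general diagram, since for these no abstract proof can be invoked. My strategy is a round-trip: given AC and any one of \textup{(P1)}--\textup{(P8)}, travel through unconditional implications until one reaches a condition appearing in Theorem \ref{thm_equiv_1}; invoke that theorem to obtain all four such conditions; then travel back to the target property along unconditional arrows. When this round-trip is blocked, I would give a short direct argument using the pointwise comparability furnished by AC to reduce the question to a Muckenhoupt-type statement on the arc $I$, where the twelve conditions are classically equivalent.
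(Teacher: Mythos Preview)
Your overall plan matches the paper's: start from Theorem~\ref{thm_equiv_1} (which identifies \ref{P1}, \ref{P2}, \ref{P3}, \ref{P7} under the AC hypothesis), import the unconditional chains $\ref{P1}\Rightarrow\ref{P2}\Rightarrow\ref{P5}\Rightarrow\ref{P4}$, $\ref{P8}\Rightarrow\ref{P3}\Rightarrow\ref{P6}\Rightarrow\ref{P4}$, and $\ref{P4}\Rightarrow\ref{P7}$ from \cite{DMO1}, and then close the remaining gaps using AC. Two small corrections: the first two chains hold for \emph{any} basis (no Vitali or weak-$(1,1)$ hypothesis is needed for Theorem~4.1 of \cite{DMO1}); only $\ref{P4}\Rightarrow\ref{P7}$ requires a covering/decomposition argument specific to Carleson squares (cf.\ Theorem~6.1 of \cite{DMO1} and Lemma~\ref{lem_sup}).

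The genuine gap in your plan is the route \emph{into} \ref{P8}. Your round-trip works for \ref{P4}, \ref{P5}, \ref{P6}: from any of \ref{P1}--\ref{P8} one reaches \ref{P7} unconditionally, Theorem~\ref{thm_equiv_1} then gives \ref{P1}, \ref{P2}, \ref{P3}, and unconditional arrows lead back out to \ref{P4}, \ref{P5}, \ref{P6}. But no unconditional arrow in the \cite{DMO1}/\cite{K} diagram points \emph{toward} \ref{P8}, so the round-trip is blocked precisely there. Your fallback (``reduce to a Muckenhoupt-type statement on the arc $I$'') is not a viable repair: \ref{P8} is a statement about super-level sets of $w$ on the full square $Q_I$, and AC only gives pointwise comparability on individual top halves $T_J$, not a reduction to a one-dimensional weight on $I$. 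The paper closes this gap with a concrete new ingredient (Proposition~\ref{prop_P4P8}): under AC one shows $\ref{P4}\Rightarrow\ref{P8}$ directly via a dyadic Calder\'on--Zygmund stopping-time argument on the descendants $J$ of $I$, using AC in the form $w(z)\le 4C\,w_{Q_J}$ for a.e.\ $z\in T_J$ to dominate $w$ pointwise by the dyadic maximal function $M_{I,d}w$, and then combining the stopping cubes with \ref{P4'}. You should make this step explicit; it is the one place where AC is actually used beyond Theorem~\ref{thm_equiv_1}.
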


By combining Proposition \ref{prop_arb_1} and Examples \ref{ex_1} - \ref{ex_6} and \ref{ex_8}, we arrive at the following complete picture of the relationships between properties \ref{P1} - \ref{P8}:

\begin{theorem}
	\label{thm_map_2}
	For arbitrary weights on the unit disc, the truth or falsity of any pairwise implication between properties \textup{\ref{P1}} - \textup{\ref{P8}} is given by Figure \textup{\ref{fig_map_2}}. In particular, no two of these properties are equivalent for such weights.
\end{theorem}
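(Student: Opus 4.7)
The proof is a verification that the arrows and crosses in Figure \ref{fig_map_2} correctly record every pairwise implication between \ref{P1}--\ref{P8} for arbitrary weights on the unit disc. My plan is to split the argument into a positive half (establishing every implication drawn as an arrow) and a negative half (producing a weight violating every implication drawn as a cross).

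For the positive half, I would appeal to Proposition \ref{prop_arb_1}, which is stated earlier in the paper and presumably records a core set of implications among \ref{P1}--\ref{P8} that hold without any side condition. These should parallel the implications proved in \cite{DMO1} for general measure spaces equipped with a basis, specialized to the basis of Carleson squares on the disc; crucially, Theorem \ref{thm_equiv_2} cannot be invoked here because its hypothesis is the almost-constant-on-top-halves property, which we are not assuming. Once this core set is in hand, the transitive closure of the resulting directed graph automatically supplies every implication that Figure \ref{fig_map_2} records as true.

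For the negative half, I would use Examples \ref{ex_1}--\ref{ex_6} and \ref{ex_8}. Each of these is a radial, piecewise-constant weight oscillating between a fixed value and an exotic value near the boundary, with the oscillation parameters tuned so that the weight satisfies a prescribed subset of \ref{P1}--\ref{P8} and fails the rest. The efficient way to proceed is to tabulate, for every example, exactly which of the eight properties it satisfies; the table, combined with the transitive closure from the positive half, lets one read off which non-implications each example refutes. A single example typically kills many implications at once: if $w$ satisfies $P_i$ and fails $P_j$, then $w$ also disproves $P_k \Rightarrow P_l$ for any $P_k$ above $P_i$ and any $P_l$ below $P_j$ in the Hasse diagram of the positive-half implications.

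The main obstacle will be the accounting. With eight properties there are $56$ ordered pairs, and with only seven counterexamples one must verify that the examples are chosen densely enough in the Hasse diagram to cover every missing arrow, and moreover that none of them accidentally satisfies both endpoints of an implication we mean to refute. Because each example is designed around tunable parameters (the measures of the oscillating annuli, the values taken on them, the divergence rate of those values), the check that a given example lies in the intended cell of the lattice reduces to a family of elementary integral estimates over dyadic Carleson squares, with the sharpness of the tuning ensuring that borderline properties fall on the correct side. The concluding assertion that no two of \ref{P1}--\ref{P8} are equivalent is then immediate, since for every pair at least one example separates them.
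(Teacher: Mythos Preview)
Your proposal is correct and is essentially the paper's own argument: the paper states just before Theorem \ref{thm_map_2} that the result follows by combining Proposition \ref{prop_arb_1} (which supplies exactly the seven positive arrows in Figure \ref{fig_map_2}) with Examples \ref{ex_1}--\ref{ex_6} and \ref{ex_8} (which supply the seven crossed arrows), and the remaining non-implications are obtained by transitivity from these, precisely as you describe.
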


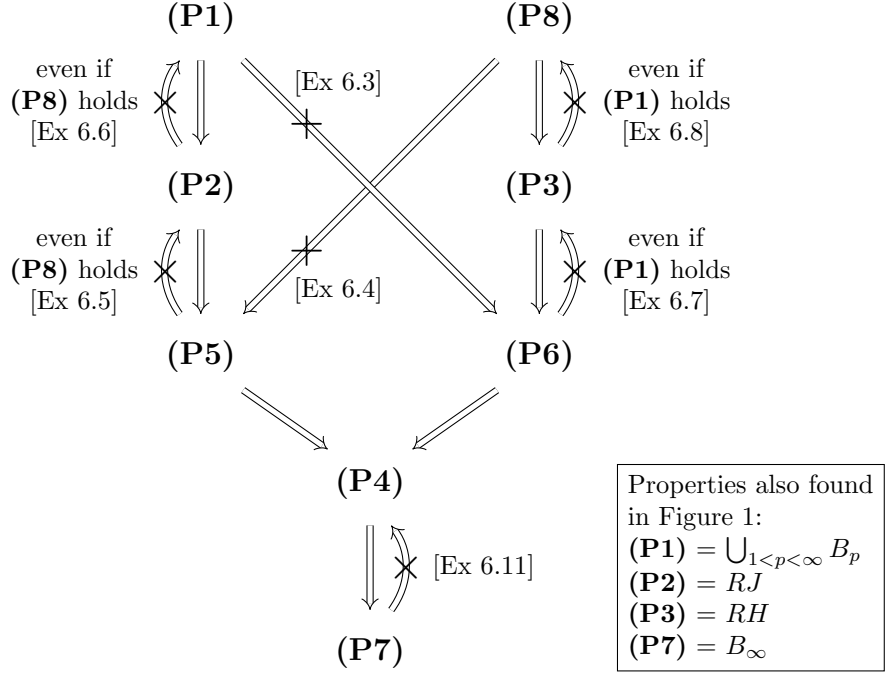
\begin{figure}
	\begin{tikzpicture}[scale=1.125]
		\def \cross {\Large $\times$};
		\tikzset{arrow/.style={-implies, double equal sign distance}};
		\tikzset{txt/.style={font=\Small, align=center}};
		
		\draw (2,8) node {\ref{P1}};
		\draw (2,6) node {\ref{P2}};
		\draw (2,4) node {\ref{P5}};
		\draw (6,8) node {\ref{P8}};
		\draw (6,6) node {\ref{P3}};
		\draw (6,4) node {\ref{P6}};
		\draw (4,2.5) node {\ref{P4}};
		\draw (4,0.5) node {\ref{P7}};
		
		\draw[arrow] (2,7.5) to (2,6.5);
		\draw[arrow] (2,5.5) to (2,4.5);
		\draw[arrow] (6,7.5) to (6,6.5);
		\draw[arrow] (6,5.5) to (6,4.5);
		\draw[arrow] (2.5,3.6) to (3.5,2.9);
		\draw[arrow] (5.5,3.6) to (4.5,2.9);
		\draw[arrow] (4,2) to (4,1);
		
		\draw[arrow] (1.75,6.5) to [bend left=36pt] node {\cross}
		node[txt, left=6pt] {even if \\ \ref{P8} holds \\ {[Ex \ref{ex_4}]}} (1.75,7.5);
		\draw[arrow] (1.75,4.5) to [bend left=36pt] node {\cross}
		node[txt, left=6pt] {even if \\ \ref{P8} holds \\ {[Ex \ref{ex_3}]}} (1.75,5.5);
		\draw[arrow] (6.25,6.5) to [bend right=36pt] node {\cross}
		node[txt, right=6pt] {even if \\ \ref{P1} holds \\ {[Ex \ref{ex_6}]}} (6.25,7.5);
		\draw[arrow] (6.25,4.5) to [bend right=36pt] node {\cross}
		node[txt, right=6pt] {even if \\ \ref{P1} holds \\ {[Ex \ref{ex_5}]}} (6.25,5.5);
		\draw[arrow] (4.25,1) to [bend right=36pt] node {\cross}
		node[txt, right=6pt] {[Ex \ref{ex_8}]} (4.25,2);
		\draw[arrow] (5.5,7.5) to node[pos=3/4, rotate=45] {\cross}
		node[txt, pos=3/4, right=12pt, below=6pt] {[Ex \ref{ex_2}]} (2.5,4.5);
		\draw[arrow] (2.5,7.5) to node[pos=1/4, rotate=45] {\cross}
		node[txt, pos=1/4, right=12pt, above=6pt] {[Ex \ref{ex_1}]} (5.5,4.5);
		
		\draw (8.5,1.5) node[draw, font=\Small, align=left] {Properties also found \\ in Figure \ref{fig_map_1}: \\ \ref{P1} = $\bigcup_{1<p<\infty} B_p$ \\ \ref{P2} = $RJ$ \\ \ref{P3} = $RH$ \\ \ref{P7} = $B_\infty$};
	\end{tikzpicture}
	\caption{Relationships between properties \ref{P1} - \ref{P8} in Definition \ref{def_P1-P8}, for arbitrary weights on the unit disc.}
	\label{fig_map_2}
\end{figure}

Moreover, we deduce Theorem \ref{thm_mono_1} as a consequence of the following more general result:

\begin{theorem}
	\label{thm_mono_2}
	Let $w$ be a weight on the unit disc which is radial and satisfies $0<w(0)<\infty$. Suppose either $w$ is increasing and satisfies \textup{\ref{P7}}, or $w$ is decreasing and satisfies \textup{\ref{P4}}. Then $w$ is almost constant on top halves.
\end{theorem}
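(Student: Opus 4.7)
The plan is to reduce to a one-dimensional problem via radiality and then argue by contradiction, using monotonicity to locate a Carleson square on which the relevant $A_\infty$-type condition breaks down. I would write $w(z) = f(|z|)$ for a Borel function $f:[0,1)\to(0,\infty]$; since each Carleson square $Q_I$ and its top half $T_I$ are, up to rotation, determined by the scale $\ell=|I|/(2\pi)$, the property of being almost constant on top halves is equivalent to a uniform bound, in $\ell\in(0,1]$, on the ratio of $\esssup$ to $\essinf$ of $f$ over $[1-\ell,1-\ell/2]$. Monotonicity identifies these essential extrema with the values of $f$ at the endpoints.

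Suppose $w$ fails this property. Then there are scales $\ell_n$ along which $f(1-\ell_n/2)/f(1-\ell_n)$ (or its reciprocal, in the decreasing case) diverges. Local integrability of $w$ together with $0<w(0)<\infty$ rule out any positive accumulation point of the $\ell_n$, so one may assume $\ell_n\to 0$. The task is then to use the assumed $A_\infty$-type condition on an appropriately chosen Carleson square to obtain a contradiction.

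For the increasing case, where $w$ satisfies \ref{P7}, I would test the Fujii--Wilson form of $B_\infty$ on the Carleson square $Q_{I_n}$ of scale $\ell_n$ itself, so that $Q_{I_n}$ occupies only a thin wedge near the boundary and the value $w(0)$ plays no role. By radiality, the maximal function $M(w\chi_{Q_{I_n}})$ reduces, at a radial point, to a one-dimensional average of $f$ over subintervals of $[1-\ell_n,1)$; the assumed multiplicative jump of factor $N_n\to\infty$ between $r=1-\ell_n$ and $r=1-\ell_n/2$ can then be shown to force the quantity $w(Q_{I_n})^{-1}\int_{Q_{I_n}}M(w\chi_{Q_{I_n}})\,dA$ to diverge with $n$, contradicting $[w]_{B_\infty}<\infty$.

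For the decreasing case, $B_\infty$ holds automatically by Proposition \ref{prop_dec}, so the stronger \ref{P4} must be used. Following the sketch in the introduction, I would subdivide the radial interval $[1-\ell_n,1-\ell_n/2]$ into $k_n$ equal pieces (with $k_n$ chosen to depend on the jump size $N_n$) and apply the pigeonhole principle to extract a narrower sub-annulus on which $f$ still drops by at least $N_n^{1/k_n}$. Viewing this sub-annulus as the top half of a smaller Carleson square, I would then show that it witnesses a quantitative failure of \ref{P4}. The main obstacle is the quantitative translation of the abstract conditions \ref{P4} and \ref{P7} into one-dimensional integral inequalities on $f$, together with the balance of scales ($\ell_n$ in the increasing case, $k_n$ in the decreasing case) needed to turn the pointwise jump into a violation of these inequalities---particularly in the decreasing case, where $B_\infty$ alone is insufficient and the specific strengthening encoded in \ref{P4} must be exploited.
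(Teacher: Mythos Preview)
Your overall strategy---reduce to one variable, argue by contradiction along scales $\ell_n\to 0$, and in the decreasing case subdivide and pigeonhole---matches the paper's approach, and your treatment of the decreasing/\ref{P4} case is essentially the paper's Lemma~\ref{lem_P4_1}. However, the increasing/\ref{P7} case contains a genuine gap.

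You propose to test the Fujii--Wilson inequality on the Carleson square $Q_{I_n}$ of scale $\ell_n$ itself. This does not yield a contradiction. In the one-variable picture (with the transformed decreasing $f$ on $(0,1]$ as in Lemma~\ref{lem_dict}), rescale so the jump sits at $t=1/2$ to $t=1$: say $f\equiv N_n$ on $(0,1/2]$ and $f\equiv 1$ on $(1/2,1]$. Then $\int_0^1 f = (N_n+1)/2$, while a direct computation gives
\[
\int_0^1 M(f1_{(0,1]})
= \int_0^1 \frac{1}{t}\int_0^t f
= \frac{N_n}{2} + \int_{1/2}^1 \frac{(N_n-1)/2 + t}{t}\,dt
= \frac{N_n+1}{2} + \frac{N_n-1}{2}\log 2,
\]
so the ratio tends to $1+\log 2$, not to $\infty$. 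The point is that for decreasing $f$ the averaged maximal function on $(0,x]$ equals $\int_0^x f(s)\log(x/s)\,ds \big/ \int_0^x f$, and a single jump inside $(0,x]$ contributes only a bounded factor.

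The paper's fix (Lemmas~\ref{lem_P7_1} and~\ref{lem_P7_2}) is to test \ref{P7} not on the scale of the jump but on a \emph{much larger} interval $(0,a]$, where $a=a(\delta)$ is chosen so that $\log(2a)$ beats the \ref{P7} constant $C$, while $a\delta$ stays bounded (so the tail $\int_1^a f\le (a-1)\delta$ does not inflate $\int_0^a f$). The delicate part is arranging that such an $a$ still fits inside $(0,1]$ after rescaling; this is why Lemma~\ref{lem_P7_2} first shows the jump scale $x_n$ must satisfy $x_n<2^{-n}$ (using the lower bound $f\ge b>0$), and then builds the auxiliary function $u$ with $tu(t)\le 2$ so that $x_n a<1$. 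Your sketch misses this enlargement of the test square and the accompanying balance between the jump size and the scale, which is exactly the ``choose the top half so that it occupies a small portion of the unit disc'' idea mentioned in the introduction---small relative to the whole disc, but large relative to where the jump occurs.
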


In Section \ref{sec_further}, we add two more conditions to the list of $A_\infty$ conditions (see Definition \ref{def_P4ab}), and we indicate how these two conditions are related to \ref{P1} - \ref{P8} (see Figure \ref{fig_map_P4}).

This paper is organized as follows: Section \ref{sec_def} contains the definitions, Section \ref{sec_prelim} contains a few preliminary results, Section \ref{sec_equiv} contains the proof of Theorem \ref{thm_equiv_2}, Section \ref{sec_mono} contains the proof of Theorem \ref{thm_mono_1} and a few corollaries, Section \ref{sec_counter} contains the counterexamples mentioned above, and Section \ref{sec_further} contains a discussion of three additional $A_\infty$ conditions.

\section{Definitions}
\label{sec_def}

We equip the unit circle $\T=\{|z|=1\}$ with normalized arc length measure and the unit disc $\D=\{|z|<1\}$ with normalized area measure. Whenever we write $I \subset \T$, we mean~that~$I$ is an interval (arc) of $\T$. For any $I \subset \T$, the \emph{Carleson square} associated with $I$ is defined by
\begin{equation}
	\label{eq_Q_I}
	Q_I = \left\{ z \in \D \setminus \{0\} : z/|z| \in I ~\text{and}~ 1 - |z| < |I| \right\},
\end{equation}
and the \emph{top half} of $Q_I$ is defined by
\begin{equation}
	\label{eq_T_I}
	T_I = \left\{z \in \D \setminus \{0\} : z/|z| \in I ~\text{and}~ |I|/2 < 1-|z| < |I| \right\}.
\end{equation}
(See Figure \ref{fig_Carleson}.) It is easy to check that $|Q_I| = |I|^2 (2-|I|)$ and $|T_I| = |I|^2 (1-\frac{3}{4}|I|)$. In particular, $|Q_I| \le 4|T_I|$. For any measurable function $f : \D \to \R$, the associated \emph{maximal function} is the function on $\D$ defined by
\[
	Mf(z) = \sup_{I \subset \T\,:\,z \in Q_I} \frac{1}{|Q_I|} \int_{Q_I} |f|.
\]

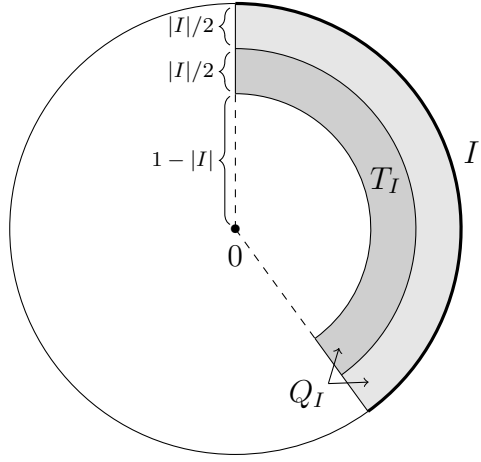
\begin{figure}
	\begin{tikzpicture}[x=3cm,y=3cm]
		\def \mod {3/5};
		\def \arg {18};
		\pgfmathsetmacro{\avg}{(1+\mod)/2};
		\pgfmathsetmacro{\angle}{180*(1-\mod)};
		\pgfmathsetmacro{\min}{\arg - \angle};
		\pgfmathsetmacro{\max}{\arg + \angle};
		\tikzset{brace/.style={decorate, decoration={brace, amplitude=4pt, raise=2pt}}};
		\tikzset{txt/.style={midway, left=4pt, font=\tiny}};
		
		\draw (\max:1) arc (\max:{\min + 360}:1);
		\draw[very thick] (\min:1) arc (\min:\max:1);
		\draw (\min:\mod) arc (\min:\max:\mod);
		\draw (\min:\avg) arc (\min:\max:\avg);
		
		\draw[dashed] (0,0) -- ($\mod*({cos(\max)},{sin(\max)})$);
		\draw[dashed] (0,0) -- ($\mod*({cos(\min)},{sin(\min)})$);
		\draw ($\mod*({cos(\max)},{sin(\max)})$) -- ($({cos(\max)},{sin(\max)})$);
		\draw ($\mod*({cos(\min)},{sin(\min)})$) -- ($({cos(\min)},{sin(\min)})$);
		
		\fill[opacity=0.1] (\min:\mod) arc (\min:\max:\mod) -- (\max:\avg) arc (\max:\min:\avg) -- cycle;
		\fill[opacity=0.1] (\min:\mod) arc (\min:\max:\mod) -- (\max:1) arc (\max:\min:1) -- cycle;
		
		\draw (0,0) node{\tiny $\bullet$} node[below=2pt]{0};
		\draw ($1.1*({cos(\arg)},{sin(\arg)})$) node{$I$};
		\draw (${(\mod+\avg)/2}*({cos(\arg)},{sin(\arg)})$) node{$T_I$};
		\draw ($\avg*({cos(\arg-\angle-12.5)},{sin(\arg-\angle-12.5)})$) node{$Q_I$};
		
		\draw[->] ($\avg*({cos(\min-5)},{sin(\min-5)})$) -- ($ {(\mod+\avg)/2}*({cos(\min+5)},{sin(\min+5)})$);
		\draw[->] ($\avg*({cos(\min-5)},{sin(\min-5)})$) -- ($ {(\avg+1)/2}*({cos(\min+5)},{sin(\min+5)})$);
		\draw[brace] ($0.02*({cos(\max)},{sin(\max)})$) -- (${0.98*\mod}*({cos(\max)},{sin(\max)})$) node[txt]{$1-|I|$};
		\draw[brace] (${1.02*\mod}*({cos(\max)},{sin(\max)})$) -- (${0.98*\avg}*({cos(\max)},{sin(\max)})$) node[txt]{$|I|/2$};
		\draw[brace] (${1.02*\avg}*({cos(\max)},{sin(\max)})$) -- ($0.98*({cos(\max)},{sin(\max)})$) node[txt]{$|I|/2$};
	\end{tikzpicture}
	\caption{An interval $I$ of the unit circle, the associated Carleson square $Q_I$, and its top half $T_I$.}
	\label{fig_Carleson}
\end{figure}

A measurable function $w : \D \to [0, \infty]$ such that $0 < w < \infty$ a.e.\ is called a \emph{weight}. For any measurable set $A\subset\D$, we define $w(A) = \int_A w$. If $|A|>0$, we define $w_A = w(A)/|A|$, the average of $w$ over $A$.

In what follows, $p'$ denotes the H\"older conjugate of an exponent $p$, $1_A$ denotes the indicator function of a set $A$, and $\log^+(x)\coloneq\max(\log(x),0)$.

\begin{definition}
	\label{def_disc}
	For any integrable weight $w$ on $\D$, we say that
	\begin{enumerate}[label=(\alph*)]
		\item $w \in B_1$ if there exists $C > 0$ such that
		\begin{equation*}
			w_{Q_I} \le C \essinf_{Q_I} w
		\end{equation*}
		for every $I\subset\T$ or, equivalently,
		\begin{equation*}
			Mw(z) \le Cw(z)
		\end{equation*}
		for a.e.\ $z \in \D$;
		\item $w \in B_p\;(1 < p < \infty)$ if there exists $C > 0$ such that, for every $I \subset \T$,
		\begin{equation*}
			\left( \frac{1}{|Q_I|} \int_{Q_I} w \right) \left( \frac{1}{|Q_I|} \int_{Q_I} w^{1-p'} \right)^{p-1} \le C;
		\end{equation*}
		\item $w$ has the \emph{reverse Jensen property} (or $w \in RJ$ for short) if there exists $C > 0$ such that, for every $I \subset \T$,
		\begin{equation*}
			\frac{1}{|Q_I|} \int_{Q_I} w \le C \exp{\left( \frac{1}{|Q_I|} \int_{Q_I} \log w \right)};
		\end{equation*}
		\item $w$ has the \emph{reverse H\"older property} (or $w \in RH$ for short) if there exist $1< q < \infty$ and $C > 0$ such that, for every $I \subset \T$,
		\begin{equation*}
			\left( \frac{1}{|Q_I|} \int_{Q_I} w^q \right)^{1/q} \le \frac{C}{|Q_I|} \int_{Q_I} w;
		\end{equation*}
		\item $w \in B_\infty$ if there exists $C > 0$ such that, for every $I \subset \T$,
		\begin{equation*}
			\int_{Q_I} M(w1_{Q_I}) \le C \int_{Q_I} w.
		\end{equation*}
	\end{enumerate}
\end{definition}

The class $B_\infty$ was introduced by Aleman, Pott and Reguera in \cite{APR2} and further studied by the same authors in \cite{APR1}, where the following property was also introduced:

\begin{definition}
	\label{def_AC}
	A weight $w$ on $\D$ is \emph{almost constant on top halves} (or $w \in AC$ for short) if there exists $C \ge 1$ such that, for every $I \subset \T$ and a.e.\ $z \in T_I$,
	\[
		C^{-1} w_{T_I} \le w(z) \le C w_{T_I},
	\]
	or, equivalently, if there exists $C \ge 1$ such that, for every $I \subset \T$,
	\[
		\esssup_{T_I} w \le C \essinf_{T_I} w.
	\]
	More generally, if $\mathcal{I}$ is any collection of intervals of $\T$, we say that $w$ is \emph{almost constant on top halves $T_I$ with $I \in \mathcal{I}$} if the same statement holds for every $I \in \mathcal{I}$ but perhaps not for every $I\subset\T$.
\end{definition}

In this paper, we say that a weight $w$ is \emph{radial} if there is a measurable function $f : [0, 1) \to [0, \infty]$ such that $w(z) = f(|z|)$ for all $z \in \D$. In this case, we say that $w$ is \emph{monotonic} (resp.\ \emph{increasing}, \emph{decreasing}) if $f$ is monotonic (resp.\ increasing, decreasing). We use the terms ``increasing'' and ``decreasing'' in the weak (non-strict) sense.

The following definitions due to Duoandikoetxea, Mart\'in-Reyes and Ombrosi \cite{DMO1} will help us carry out a more thorough investigation of $A_\infty$ conditions for weights on $\D$: Let $(X,\Sigma,\mu)$ be a $\sigma$-finite measure space. A \emph{basis} for $X$ is a collection $\BB$ of measurable subsets of $X$ such that $0<\mu(B)<\infty$ for all $B\in\BB$ and $\bigcup_{B\in\BB} B = X$ up to a set of measure zero. For any measurable function $f$ on $X$, we define the associated \emph{maximal function} by
\[
	Mf(x) = \sup_{B\in\BB\,:\,x \in B} \frac{1}{\mu(B)} \int_B |f|\,d\mu, \qquad x \in X.
\]
A measurable function $w$ on $X$ such that $0<w<\infty$ a.e.\ is called a \emph{weight} on $X$. For any measurable set $A \subset X$, we set $w(A) = \int_A w\,d\mu$. For any $B \in \BB$, we set $w_B = w(B)/\mu(B)$, and we define $m(w;B)$ to be the median of $w$ over $B$, i.e.\ a number $t\in(0,\infty)$ such that
\[
	\mu(\{x \in B : w(x)<t\}) \le \mu(B)/2
	\qquad \text{and} \qquad
	\mu(\{x \in B : w(x)>t\}) \le \mu(B)/2.
\]
The set of all such $t$ is a closed interval $[a,b]$, and $m(w;B)$ can be defined as $a$, $b$, or some combination thereof; the precise definition does not matter for our purposes.

\begin{definition}
	\label{def_P1-P8}
	As in \cite{DMO1}, for any weight $w$ on $X$ such that $w(B)<\infty$ for all $B\in\BB$, we define the following $A_\infty$ conditions:
	\begin{enumerate}
		\item[\namedlabel{P1}{\bfseries(P1)}] There exist $1<p<\infty$ and $C>0$ such that, for every $B\in\BB$,
		\[
			\left(\frac{1}{\mu(B)} \int_B w\,d\mu\right)
			\left(\frac{1}{\mu(B)} \int_B w^{1-p'}\,d\mu\right)^{p-1}
			\le C.
		\]
		\item[\namedlabel{P1'}{\bfseries(P1')}] There exist $\delta,C>0$ such that, for every $B \in \BB$ and every measurable set $E \subset B$,
		\[
			\frac{\mu(E)}{\mu(B)} \le C \left(\frac{w(E)}{w(B)}\right)^\delta.
		\]
		\item[\namedlabel{P2}{\bfseries(P2)}] There exists $C>0$ such that, for every $B\in\BB$,
		\[
			\frac{1}{\mu(B)} \int_B w\,d\mu
			\le C
			\exp\left(\frac{1}{\mu(B)} \int_B \log w\,d\mu\right).
		\]
		\item[\namedlabel{P2'}{\bfseries(P2')}] There exists $C>0$ such that, for every $B\in\BB$ and every $0<s<1$,
		\[
			\frac{1}{\mu(B)} \int_B w\,d\mu \le C \left(\frac{1}{\mu(B)} \int_B w^s\,d\mu\right)^{1/s}.
		\]
		\item[\namedlabel{P3}{\bfseries(P3)}] There exist $1<q<\infty$ and $C>0$ such that, for every $B\in\BB$,
		\[
			\left(\frac{1}{\mu(B)} \int_B w^q\,d\mu\right)^{1/q}
			\le \frac{C}{\mu(B)} \int_B w\,d\mu.
		\]
		\item[\namedlabel{P3'}{\bfseries(P3')}] There exist $\delta,C>0$ such that, for every $B\in\BB$ and every measurable set $E \subset B$,
		\[
			\frac{w(E)}{w(B)} \le C \left(\frac{\mu(E)}{\mu(B)}\right)^\delta.
		\]
		\item[\namedlabel{P4}{\bfseries(P4)}] There exist $\alpha,\beta\in(0,1)$ such that, for every $B\in\BB$ and every measurable set $E \subset B$,
		\[
			\mu(E)<\alpha\mu(B) \implies w(E)<\beta w(B).
		\]
		\item[\namedlabel{P4'}{\bfseries(P4')}] There exist $\alpha,\beta\in(0,1)$ such that, for every $B\in\BB$,
		\[
			\mu(\{x \in B : w(x) \le \alpha w_B\}) \le \beta \mu(B).
		\]
		\item[\namedlabel{P5}{\bfseries(P5)}] There exists $C>0$ such that, for every $B\in\BB$,
		\[
			w_B \le C m(w;B).
		\]
		\item[\namedlabel{P6}{\bfseries(P6)}] There exists $C>0$ such that, for every $B\in\BB$,
		\[
			\int_B w \log^+ \left(\frac{w}{w_B}\right) \,d\mu \le C w(B).
		\]
		\item[\namedlabel{P7}{\bfseries(P7)}] There exists $C>0$ such that, for every $B\in\BB$,
		\[
			\int_B M(w1_B)\,d\mu \le C w(B).
		\]
		\item[\namedlabel{P8}{\bfseries(P8)}] There exist $\beta,C>0$ such that, for every $B\in\BB$ and every $\lambda>w_B$,
		\[
			w(\{x \in B : w(x)>\lambda\})
			\le C \lambda \mu(\{x \in B : w(x)>\beta\lambda\}).
		\]
	\end{enumerate}
\end{definition}

The reader will notice that \ref{P1} is the union of the classes $A_p~(1<p<\infty)$, \ref{P2} is the reverse Jensen property, and \ref{P3} is the reverse H\"older property. Of the remaining properties, \ref{P5} was introduced by Str\"omberg and Torchinsky \cite{ST}, \ref{P6} and \ref{P7} were introduced by Fujii \cite{F}, and \ref{P4'} and \ref{P8} first appeared in the work of Coifman and Fefferman \cite{CF}. As we mentioned in Section \ref{sec_intro}, \textbf{(P\textit{j}')} is always equivalent to \textbf{(P\textit{j})} for $j=1,\dots,4$. It is also worth mentioning that a slightly different version of \ref{P4} (see \ref{P4a} in Section \ref{sec_further}) is the original definition of the $A_\infty$ condition due to Muckenhoupt \cite{M}, and that \ref{P3'} is the definition of $A_\infty$ adopted in \cite{CF}. The authors of \cite{DMO1} were motivated to study \ref{P1} - \ref{P8} in general measure spaces after observing in \cite{DMO2} that these properties are not equivalent for the ray $(0,\infty)$ equipped with the basis of intervals $(0,b)$, where $0<b<\infty$.

\begin{definition}
	\label{def_A1}
	A weight $w$ on $X$ is said to be an $A_1$ weight if there exists $C>0$ such that
	\[
		w_B \le C \essinf_{B} w
	\]
	for every $B\in\BB$ or, equivalently,
	\[
		Mw(x) \le Cw(x)
	\]
	for a.e.\ $x \in X$.
\end{definition}

In this paper, we consider $\D$ with the basis of Carleson squares, and $(0,1]$ with the basis of intervals $(0,x]$, where $0<x\le1$. Let us reiterate that, for weights on $\D$, the class $A_p$ is denoted by $B_p$ for $1 \le p < \infty$, and the class of weights satisfying \ref{P7} is denoted by $B_\infty$.

\section{Preliminaries}
\label{sec_prelim}

We begin with an estimate related to the $B_\infty$ condition.

\begin{lemma}
	\label{lem_sup}
	Let $w$ be a weight on $\D$. For $I \subset \T$ and $z \in Q_I$, define
	\[
		M_I w(z) = \sup_{J \subset I\,:\,z \in Q_J} w_{Q_J}.
	\]
	(Note that $M_Iw(z)$ is the same as $Mw(z)$ except that we require $J \subset I$.) Then
	\[
		\int_{Q_I} M(w1_{Q_I}) \le 4 \int_{Q_I} M_I w.
	\]
	Moreover, if $w$ is radial, then $M(w1_{Q_I})=M_Iw$ on $Q_I$.
\end{lemma}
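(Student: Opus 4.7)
My plan is to prove the inequality by a pointwise analysis of $M(w 1_{Q_I})$ on $Q_I$. Fix $z \in Q_I$ and any arc $K \subset \T$ with $z \in Q_K$; the aim is to control the average $\frac{1}{|Q_K|} \int_{Q_K \cap Q_I} w$ by $M_I w(z)$, possibly up to a correction involving $w_{Q_I}$. I would split the argument into geometric cases based on how $K$ sits relative to $I$.

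The first three cases admit direct pointwise bounds. If $K \subseteq I$, then $\frac{1}{|Q_K|} \int_{Q_K \cap Q_I} w = w_{Q_K} \le M_I w(z)$ trivially. If $|K| \ge |I|$, monotonicity of $t \mapsto t^2(2-t)$ on $[0,1]$ gives $|Q_K| \ge |Q_I|$, so $\frac{1}{|Q_K|} \int_{Q_K \cap Q_I} w \le \frac{|Q_I|}{|Q_K|} w_{Q_I} \le w_{Q_I} \le M_I w(z)$, the last bound obtained by taking $J = I$ in the supremum defining $M_I w$. If $|K| < |I|$ with $K \not\subseteq I$ and $K \cap I$ a single arc (so $K$ pokes out of $I$ on only one side), I would construct an arc $K^{**} \subseteq I$ of length $|K|$ with $K \cap I \subseteq K^{**}$ and $z \in Q_{K^{**}}$, essentially by sliding $K$ inward until it fits inside $I$; then $Q_{K^{**}} \supseteq Q_K \cap Q_I$ and $|Q_{K^{**}}| = |Q_K|$, so the average is $\le w_{Q_{K^{**}}} \le M_I w(z)$.

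The delicate and only remaining case is $|K| < |I|$ with $K \cap I$ the union of two disjoint arcs $J_1, J_2 \subseteq I$, possible only when $|I| > 1/2$ and $|K| + |I| > 1$, so that $K$ wraps around the complementary arc $\T \setminus I$ and re-enters $I$ on both sides. Here $J_1$ and $J_2$ sit at opposite ends of $I$ and no single sub-arc of $I$ of length $|K|$ contains both, so the sliding trick fails on one of them. I would split $\int_{Q_K \cap Q_I} w = \int_{A_1} w + \int_{A_2} w$, where $A_i$ is the portion of $Q_K \cap Q_I$ over $J_i$, apply the sliding argument to whichever $A_i$ contains $z$'s argument (bounding it by $M_I w(z)$), and estimate the remote piece in terms of $w_{Q_I}$. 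Integrating the resulting bound over $Q_I$ and using $w(Q_I) \le \int_{Q_I} M_I w$ (which follows from $w_{Q_I} \le M_I w(z)$ pointwise on $Q_I$) yields the factor of $4$. This two-arc geometry is the main obstacle.

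The moreover claim is a clean consequence of the same analysis, simplified by the fact that for radial $w$ the quantity $w_{Q_J}$ depends only on $|J|$. Then in every case, including the two-arc one, one may choose $J \subseteq I$ of length $\min(|K|, |I|)$ with $z \in Q_J$; a direct calculation using the radial formula shows $\frac{1}{|Q_K|} \int_{Q_K \cap Q_I} w \le w_{Q_J} \le M_I w(z)$, since the factor $|K \cap I|/|K| \le 1$ is absorbed cleanly. The reverse inequality is immediate since $J \subseteq I$ forces $Q_J \subseteq Q_I$, so averaging $w 1_{Q_I}$ over $Q_J$ equals $w_{Q_J}$. Hence $M(w 1_{Q_I}) = M_I w$ on $Q_I$.
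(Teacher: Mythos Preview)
Your treatment of the first three cases ($K \subseteq I$; $|K| \ge |I|$; single-arc overlap with $|K|<|I|$) and of the radial ``moreover'' statement is correct and parallels the paper's argument. The gap is in the two-arc case. You propose to bound the remote contribution $\frac{1}{|Q_K|}\int_{A_2} w$ ``in terms of $w_{Q_I}$'' and then integrate, but no pointwise estimate of the form $\frac{1}{|Q_K|}\int_{A_2} w \le C\, w_{Q_I}$ holds with a uniform constant. Take $|I| = 1-\varepsilon$, let $K$ have length $2\varepsilon$ and straddle $\T\setminus I$ so that $|J_1|=|J_2|=\varepsilon/2$, and set $w=N$ on $A_2$ and $w=1$ elsewhere. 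Then $\frac{1}{|Q_K|}\int_{A_2} w$ is comparable to $N$ while $w_{Q_I}$ is comparable to $1+N\varepsilon^2$; letting $\varepsilon\to 0$ with $N$ large shows the ratio is unbounded. Consequently a pointwise inequality $M(w1_{Q_I})(z) \le M_I w(z) + C\,w_{Q_I}$ is false, and the integration step you sketch cannot yield any fixed constant, let alone $4$.

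The paper handles this case by a reflection trick that you are missing. For $z \in Q_I$ let $z^*$ be the mirror image of $z$ across the axis of symmetry of $Q_I$. In the two-arc situation one rotates $K$ into $I$ in whichever direction retains the larger of $\int_{A_1} w$ and $\int_{A_2} w$; this costs at most a factor $2$ in the numerator, and the rotated Carleson square necessarily contains either $z$ or $z^*$. This yields the pointwise bound $M(w1_{Q_I})(z) \le 2M_I w(z) + 2M_I w(z^*)$, and integrating over $Q_I$ together with the measure-preserving change of variables $z\mapsto z^*$ gives the factor $4$. The reflection is exactly what replaces your attempted global control by $w_{Q_I}$ with a localized maximal quantity $M_I w(z^*)$ that behaves well under integration.
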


\begin{proof}
	For each $z \in Q_I$, let $z^*$ be the point of $Q_I$ obtained by reflecting $z$ in the axis of symmetry of $Q_I$. By definition,
	\[
		M(w1_{Q_I})(z) = \sup_{J \subset \T\,:\,z \in Q_J} \frac{w(Q_I \cap Q_J)}{|Q_J|}.
	\]
	Suppose $J\subset\T$ is such that $z \in Q_J$. If $|J| \ge |I|$, rotating $J$ about the origin until $J \supset I$ increases $w(Q_I \cap Q_J)$ to $w(Q_I)$, and then shrinking $J$ until $J = I$ decreases $|Q_J|$ to $|Q_I|$. If $|J| \le |I|$, there are two cases, depending on whether $I \cap J$ is a single interval or a union of two intervals. (The latter can occur if $|I|>1/2$.) In the former case, if we rotate $J$ just enough so that $J \subset I$, then $w(Q_I \cap Q_J)$ will increase and $z$ will remain in $Q_J$. In the latter case, if we rotate $J$ in the direction which contributes more to $w(Q_I \cap Q_J)$, again just enough so that $J \subset I$, then $w(Q_I \cap Q_J)$ will lose at most half of its value, and the new $Q_J$ will contain either $z$ or $z^*$. This reasoning shows that
	\[
		M(w1_{Q_I})(z) \le 2M_I w(z) + 2M_I w(z^*).
	\]
	Integrating over $Q_I$ and using the fact that $\int_{Q_I} f(z^*)\,dz = \int_{Q_I} f(z)\,dz$ for any nonnegative measurable function $f$ on $Q_I$, we get the desired inequality.
	
	If $w$ is radial, then, in the case where $|J|\le|I|$ and $I \cap J$ is a union of two intervals, rotating $J$ in the direction of $z$, just enough so that $J \subset I$, increases $w(Q_I \cap Q_J)$ and keeps $z$ in $Q_J$, and we obtain $M(w1_{Q_I})(z)=M_Iw(z)$.
\end{proof}

Next, we establish a dictionary between radial weights on $\D$ and weights on $(0,1]$.

\begin{lemma}
	\label{lem_dict}
	Let $w$ be a weight on $\D$ and let $f$ be a weight on $(0,1]$. Suppose $w(z) = f(1-|z|^2)$ for all $z\in\D$. Suppose $I\subset\T$ and $x \in (0,1]$ satisfy $x = |I|(2-|I|)$, i.e.\ $|I|=1-\sqrt{1-x}$. Then the following hold:
	\begin{enumerate}[label=\upshape(\alph*)]
		\item For any measurable function $\varphi:(0,\infty)\to\R$, we have
		\[
			\frac{1}{|Q_I|} \int_{Q_I} (\varphi \circ w) = \frac{1}{x} \int_0^x (\varphi \circ f),
		\]
		provided that at least one of the integrals is defined. In particular, $w$ is integrable if and only if $f$ is integrable.
		\item For all $z\in\D$, we have
		\[
			Mw(z) = Mf(1-|z|^2).
		\]
		\item For all $z \in Q_I$ and all $t \in (0,x]$, we have
		\[
			M(w1_{Q_I})(z) = \sup_{J \subset I\,:\,z \in Q_J} w_{Q_J}
			\qquad \text{and} \qquad
			M(f1_{(0,x]})(t) = \sup_{t \le y \le x} f_{(0,y]}.
		\]
		\item We have
		\[
			\frac{1}{|Q_I|} \int_{Q_I} M(w1_{Q_I}) = \frac{1}{x} \int_0^x M(f1_{(0,x]}).
		\]
		\item We have
		\[
			\esssup_{Q_I} w = \esssup_{(0,x]} f
			\qquad \text{and} \qquad
			\essinf_{Q_I} w = \essinf_{(0,x]} f.
		\]
		\item If $E \subset (0,x]$ is measurable, then
		\[
			w(\{z \in Q_I : 1-|z|^2 \in E\}) = |I|f(E).
		\]
		In particular,
		\[
			|\{z \in Q_I : 1-|z|^2 \in E\}| = |I||E|.
		\]
		\item If $S \subset (0,\infty)$ is measurable, then
		\[
			w(\{z \in Q_I : w(z) \in S\}) = |I| f(\{t \in (0,x] : f(t) \in S\}).
		\]
		In particular,
		\[
			|\{z \in Q_I : w(z) \in S\}| = |I| |\{t \in (0,x] : f(t) \in S\}|.
		\]
		\item We have
		\[
			m(w;Q_I) = m(f;(0,x]).
		\]
	\end{enumerate}
\end{lemma}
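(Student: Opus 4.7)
The proof rests on a single polar-coordinates change of variables from which all eight parts follow directly. My plan is to prove part (a) first, upgrade the resulting identity to the more general statement
\[
	\int_{Q_I} \psi(1-|z|^2)\,dz = |I|\int_0^x \psi(u)\,du
\]
for any nonnegative measurable $\psi$ on $(0,x]$, use this to derive parts (e)--(g) by specializing $\psi$, deduce part (h) from part (g) via the definition of median, and handle the maximal-function parts (b), (c), (d) using the radial structure together with Lemma \ref{lem_sup}.

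For part (a), I would parametrize $Q_I$ as $\{re^{i\theta}:e^{i\theta}\in I,\ 1-|I|<r<1\}$ and compute
\[
	\int_{Q_I}(\varphi \circ w) = 2|I| \int_{1-|I|}^1 \varphi(f(1-r^2))\,r\,dr,
\]
where the factor $2|I|$ absorbs the two normalizations $|\T| = |\D| = 1$. The substitution $u=1-r^2$ sends $(1-|I|,1)$ onto $(0,x)$ precisely because $x=1-(1-|I|)^2=|I|(2-|I|)$, converting the right-hand side to $|I|\int_0^x(\varphi\circ f)$. Dividing by $|Q_I|=|I|\cdot x$ yields the formula. The same computation with $\varphi\circ f$ replaced by an arbitrary nonnegative measurable $\psi$ on $(0,x]$ gives the displayed general identity, and the case $\varphi=\mathrm{id}$ gives the equivalence of integrability.

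With the general identity in hand, the remaining parts are short. Part (f) follows by taking $\psi=f\cdot 1_E$ for the weighted equality and $\psi=1_E$ for the Lebesgue equality. Part (g) is the special case of (f) with $E=\{t\in(0,x]:f(t)\in S\}$. Part (e) follows because, for every $\lambda>0$, part (g) with $S=(\lambda,\infty)$ shows $|\{w>\lambda\}\cap Q_I|>0$ if and only if $|\{f>\lambda\}\cap(0,x]|>0$, and symmetrically for the essential infimum. Part (h) follows from part (g) applied with $S=(0,t)$ and $S=(t,\infty)$: after dividing the defining inequalities for a median by $|I|$ and using $|Q_I|=|I|\cdot x$, the set of valid values of $t$ coincides on both sides.

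For part (b), I would note that radiality makes $w_{Q_I}$ depend only on $|I|$, so the condition $z\in Q_I$ reduces to $|I|>1-|z|$, equivalently (by monotonicity of $s\mapsto s(2-s)$ on $(0,1]$) to $|I|(2-|I|)>t$ where $t=1-|z|^2$; combined with part (a) this gives $Mw(z)=\sup_{y>t}f_{(0,y]}=Mf(t)$. Part (c) is essentially by definition: the first equality is the radial case of Lemma \ref{lem_sup}, and the second holds because for $y>x$ the value $\tfrac{1}{y}\int_0^x f$ is dominated by $f_{(0,x]}$, so the supremum collapses to $[t,x]$. Part (d) then follows by combining (c) with the general identity: (c) shows $M(w1_{Q_I})(z)=G(1-|z|^2)$ on $Q_I$ where $G=M(f1_{(0,x]})$, so the identity with $\psi=G$, divided by $|Q_I|=|I|\cdot x$, gives the stated integral equality. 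The main nuisance will be bookkeeping -- keeping normalization constants straight and noting that the harmless strict-versus-non-strict endpoint discrepancies in (b) and (c) cause no problem because $y\mapsto f_{(0,y]}$ is continuous on $(0,1]$ for integrable $f$ (and both sides are $\infty$ otherwise).
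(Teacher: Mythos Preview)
Your proposal is correct and follows essentially the same approach as the paper: the polar-coordinate substitution $u=1-r^2$ to prove (a), Lemma~\ref{lem_sup} for the first half of (c), the collapse of the supremum to $[t,x]$ for the second half, and deducing (g) and (h) from (f) and (g) respectively. The only notable difference is organizational: you package the change of variables once as the general identity $\int_{Q_I}\psi(1-|z|^2)\,dz=|I|\int_0^x\psi$ and then specialize $\psi$ to obtain (e), (f), and (d), whereas the paper repeats the polar-coordinate computation for (f) and proves (e) directly from the equivalence $|z|>1-|I|\iff 1-|z|^2<x$; your packaging is slightly cleaner but the content is identical.
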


\begin{proof}
	(a) Note that $I = \{z\in\T : \theta_1 < \arg z < \theta_2\}$ for some $\theta_1,\theta_2\in\R$ with $0<\theta_2-\theta_1\le2\pi$. Hence $Q_I = \{z\in\D : |z|>1-|I| ~\text{and}~ \theta_1<\arg z<\theta_2\}$ and $\theta_2-\theta_1 = 2\pi|I|$. Using polar coordinates and the substitution $u=1-r^2$, we get
	\begin{align*}
		\int_{Q_I} \varphi(w(z))\,dz
		&= \frac{1}{\pi} \int_{\theta_1}^{\theta_2} \int_{1-|I|}^1 \varphi(w(re^{i\theta})) r\,dr\,d\theta
		= 2|I| \int_{1-|I|}^1 \varphi(f(1-r^2))r\,dr \\
		&= |I| \int_0^{|I|(2-|I|)} \varphi(f(u))\,du.
	\end{align*}
	Since $|Q_I| = |I|^2 (2-|I|)$, the desired formula follows. Taking $I=\T$, $x=1$, and $\varphi(t)=t$, we get $\int_\D w = \int_0^1 f$, so $\int_\D w < \infty$ if and only if $\int_0^1 f < \infty$.
	
	(b) For any measurable function $f : (0,1] \to \R$, we have
	\[
		Mf(t) = \sup_{t \le y \le 1} \frac{1}{y} \int_0^y |f|,
		\qquad t \in (0,1].
	\]
	If $J\subset\T$ and $z \in Q_J$, then $|z|>1-|J|$ and hence $1-|z|^2<|J|(2-|J|)\le1$. Conversely, if $z\in\D$ and $1-|z|^2<y\le1$, let $\alpha=1-\sqrt{1-y}$; then $y=\alpha(2-\alpha)$ and hence $|z|>1-\alpha$, so $z \in Q_J$ for some $J\subset\T$ with $|J|=\alpha$. Now, for any $z \in \D$, by part (a) and what we have just proved,
	\begin{align*}
		Mw(z)
		&= \sup_{J \subset \T\,:\,z \in Q_J} \frac{1}{|Q_J|} \int_{Q_J} w
		= \sup_{J \subset \T\,:\,z \in Q_J} \frac{1}{|J|(2-|J|)} \int_0^{|J|(2-|J|)} f \\
		&= \sup_{1-|z|^2 < y \le 1} \frac{1}{y} \int_0^y f
		= Mf(1-|z|^2).
	\end{align*}

	(c) For the first part, see Lemma \ref{lem_sup}. For the second part, let $t \in (0,x]$. By definition,
	\[
		M(f1_{(0,x]})(t) = \sup_{t \le y \le 1} \frac{1}{y} \int_0^{\min(x,y)} f.
	\]
	If $x \le y \le 1$, replacing $y$ with $x$ increases the value of $\frac{1}{y} \int_0^{\min(x,y)} f$. Thus, it suffices to take the supremum over $t \le y \le x$.
	
	(d) We compute
	\begin{align*}
		\frac{1}{|Q_I|} \int_{Q_I} M(w1_{Q_I})(z)\,dz
		&= \frac{1}{|Q_I|} \int_{Q_I} \left(\sup_{J \subset I\,:\,z \in Q_J} \frac{1}{|Q_J|} \int_{Q_J} w\right)\,dz \\
		&= \frac{1}{|Q_I|} \int_{Q_I} \biggl(\sup_{J \subset I\,:\,z \in Q_J} \frac{1}{|J|(2-|J|)} \int_0^{|J|(2-|J|)} f\biggr)\,dz \\
		&= \frac{1}{|Q_I|} \int_{Q_I} \biggl(\sup_{1-|z|^2 < y \le x} \frac{1}{y} \int_0^y f\biggr)\,dz \\
		&= \frac{1}{|Q_I|} \int_{Q_I} M(f1_{(0,x]})(1-|z|^2)\,dz \\
		&= \frac{1}{x} \int_0^x M(f1_{(0,x]})(t)\,dt.
	\end{align*}
	Here we used part (c), part (a), the fact that $1-|z|<|J|\le|I|$ if and only if $1-|z|^2<|J|(2-|J|)\le|I|(2-|I|)$, part (c) again, and finally a computation in polar coordinates as in part (a).
	
	(e) Since $|z|>1-|I|$ if and only if $1-|z|^2<x$, we have
	\[
		\esssup_{z \in Q_I} w(z)
		= \esssup_{1-|I|<|z|<1} f(1-|z|^2)
		= \esssup_{0<t<x} f(t).
	\]
	The proof for $\essinf$ is similar.
	
	(f) Let $\theta_1$ and $\theta_2$ be as in part (a). Let $A = \{r \in [0,1)\,:\,1-r^2 \in E\}$. Since $|z|>1-|I|$ if and only if $1-|z|^2<x$, we have
	\begin{align*}
		w(\{z \in Q_I : 1-|z|^2 \in E\})
		&= w(\{z \in \D : 1-|z|^2 \in E ~\text{and}~ \theta_1 < \arg z < \theta_2\}) \\
		&= \frac{1}{\pi} \int_{\theta_1}^{\theta_2} \int_A w(re^{i\theta})r\,dr\,d\theta
		= 2|I| \int_A f(1-r^2)r\,dr
		= |I|f(E).
	\end{align*}
	For the special case, take $w=1$ and $f=1$.
	
	(g) Set $E = \{t \in (0,x] : f(t) \in S\}$ in part (f).
	
	(h) Since $|Q_I|=|I||(0,x]|$, part (g) gives, for any $m\in\R$,
	\[
		|\{z \in Q_I : w(z) < m\}| \le |Q_I|/2
		\iff
		|\{t \in (0,x] : f(t) < m\}| \le |(0,x]|/2,
	\]
	and the same equivalence for $w(z)>m$ and $f(t)>m$.
\end{proof}

Now, we use our dictionary to show that, to determine whether a radial weight on $\D$ has a certain property, it often suffices to check whether the corresponding weight on $(0,1]$ has that property.

\begin{lemma}
	\label{lem_equiv}
	Let $w$ and $f$ be as in Lemma \textup{\ref{lem_dict}}, i.e.\ $w(z) = f(1-|z|^2)$. Then, for any $(\star) \in \{\PP1,\dots,\PP8,A_1\}$, we have $w \in (\star) \iff f \in (\star)$. Moreover, any constant(s) (one or two of $C$, $p$, $q$, $\alpha$, $\beta$) that work(s) for one of the two weights also work(s) for the other.
\end{lemma}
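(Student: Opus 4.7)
The plan is to proceed property by property, converting the $\D$-side condition to the $(0,1]$-side condition (and back) by invoking the appropriate part of Lemma \ref{lem_dict}. Since the map $I \mapsto x = |I|(2-|I|)$ is a bijection between $\{I\subset\T\}$ and $(0,1]$, universal quantification over $I$ translates directly to universal quantification over $x$; moreover every relevant quantity on $Q_I$ matches the analogous quantity on $(0,x]$, so the constants $C$, $p$, $q$, $\alpha$, $\beta$ are automatically preserved.

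For the properties whose formulation involves only averages of the form $\frac{1}{|Q_I|}\int_{Q_I}\varphi(w)$---namely \ref{P1}, \ref{P2}, \ref{P3} and \ref{P6}---I would simply invoke Lemma \ref{lem_dict}(a), using the case $\varphi(t)=t$ to identify $w_{Q_I}$ with $f_{(0,x]}$ wherever that quantity appears inside a $\varphi$. Property $A_1$ additionally uses the essential-infimum identity (e); property \ref{P5} uses the median identity (h); property \ref{P7} is essentially the statement of (d) combined with (a); and property \ref{P8} follows from (g) applied to the super-level sets $\{w>\lambda\}$ and $\{w>\beta\lambda\}$, noting that the constraint $\lambda>w_{Q_I}$ matches $\lambda>f_{(0,x]}$ by (a). Each of these cases is a direct substitution with no change of constants.

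The only delicate case is \ref{P4}. The obstacle is that \ref{P4} quantifies over \emph{all} measurable $E\subset Q_I$, not merely the cylindrical subsets $\{z\in Q_I : 1-|z|^2\in E'\}$ covered by Lemma \ref{lem_dict}(f). I would handle this by a rearrangement argument: for arbitrary measurable $E\subset Q_I$, define $\psi_E:(0,x]\to[0,1]$ by letting $\psi_E(1-r^2)$ be the normalized angular fraction of $E$ at radius $r$; a short polar-coordinate computation then yields $|E|=|I|\int_0^x\psi_E$ and $w(E)=|I|\int_0^x f\psi_E$. The bathtub principle produces a set $E^{\ast}\subset(0,x]$ with $|E^{\ast}|=\int_0^x\psi_E$ and $\int_{E^{\ast}}f\ge\int_0^x f\psi_E$. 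Thus $|E|<\alpha|Q_I|$ forces $|E^{\ast}|<\alpha x$, so \ref{P4} for $f$ yields $f(E^{\ast})<\beta f((0,x])$, which gives $w(E)<\beta w(Q_I)$. The converse direction is immediate by applying (f) to radial test sets.

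The main obstacle is exactly this \ref{P4} transfer: every other property either averages $w$ (handled by (a)) or is defined purely via the level sets of $w$ (handled by (g) or (h)), whereas \ref{P4} mixes arbitrary subsets with both $|\cdot|$ and $w(\cdot)$. Once the bathtub step is in place, the remainder is routine bookkeeping.
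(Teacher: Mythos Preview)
Your proposal is correct and matches the paper's proof: all cases $(\star)\ne\PP4$ are immediate from the corresponding parts of Lemma \ref{lem_dict}, and for \ref{P4} the paper uses the same rearrangement idea, replacing an arbitrary $A\subset Q_I$ by the radial set $A'=\{z\in Q_I:1-|z|^2\in E\}$ with $E\subset(0,x]$ chosen so that $|E|=|A|/|I|$ and $\inf_E f\ge\sup_{(0,x]\setminus E}f$---precisely your bathtub set $E^{\ast}$. One minor wording issue: $I\mapsto|I|(2-|I|)$ is not literally a bijection from intervals of $\T$ to $(0,1]$ (rotated intervals collapse), but since $w$ is radial all relevant quantities depend only on $|I|$, which is what you actually need.
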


\begin{proof}
	For $(\star)\ne\PP4$, this follows immediately from Lemma \ref{lem_dict}. For $(\star)=\PP4$, a slightly longer argument is needed:
	
	$(\implies)$ Suppose $w \in \PP 4$. Let $x \in (0,1]$ and a measurable set $E \subset (0,x]$ be given. Choose $I \subset \T$ such that $|I| = 1-\sqrt{1-x}$, and let $A=\{z \in Q_I : 1-|z|^2 \in E\}$. Suppose $|E| < \alpha |(0,x]|$. By Lemma \ref{lem_dict}, $|A| < \alpha |Q_I|$, so $w(A) < \beta w(Q_I)$. Again by Lemma \ref{lem_dict}, $f(E) < \beta f((0,x])$. Thus, $f \in \PP 4$.
	
	$(\impliedby)$ Suppose $f \in \PP 4$. Let $I \subset \T$ and a measurable set $A \subset Q_I$ be given. Then $|A| \le |Q_I|$, so $|A|/|I| \le x$. Choose a measurable set $E \subset (0,x]$ such that $|E| = |A|/|I|$ and $\inf_E f \ge \sup_{(0,x] \setminus E} f$, and let $A' = \{z \in Q_I : 1-|z|^2 \in E\}$. Then $|A'|=|A|$ by Lemma \ref{lem_dict}, and $w(A') \ge w(A)$ since $\inf_{A'} w \ge \sup_{Q_I \setminus A'} w$. Suppose $|A| < \alpha |Q_I|$, i.e.\ $|A'| < \alpha |Q_I|$. By Lemma \ref{lem_dict}, $|E| < \alpha |(0,x]|$, so $f(E) < \beta f((0,x])$. Again by Lemma \ref{lem_dict}, $w(A') < \beta w(Q_I)$, so $w(A) < \beta w(Q_I)$. Thus, $w \in \PP 4$.
\end{proof}

The following two lemmas will be useful in Sections \ref{sec_mono} and \ref{sec_counter}, respectively.

\begin{lemma}
	\label{lem_AC}
	Let $w$ be a monotonic weight on $\D$ with $0<w(0)<\infty$. Suppose there exists $0<\varepsilon<1$ such that $w$ is almost constant on top halves $T_I$ with $|I|<\varepsilon$. Then $w$ is almost constant on all top halves.
\end{lemma}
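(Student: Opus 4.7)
The plan is to handle intervals $I$ with $|I| \ge \varepsilon$ by a direct compactness-and-monotonicity argument, and then combine with the hypothesis for $|I| < \varepsilon$.

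First I would observe that the defining inequality $|I|/2 < 1 - |z|$ for $T_I$ forces $|z| < 1 - |I|/2$, so whenever $|I| \ge \varepsilon$ we have $T_I \subset \{z \in \D : |z| < 1 - \varepsilon/2\}$. Writing $w(z) = f(|z|)$ for the monotone function $f : [0,1) \to [0,\infty]$ representing $w$, this means the behaviour of $w$ on $T_I$ is controlled by the behaviour of $f$ on the fixed interval $[0, 1 - \varepsilon/2]$, independently of $I$.

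Next I would check that $f$ has positive, finite essential infimum and supremum on $[0, 1 - \varepsilon/2]$. Since $w$ is a weight, $f \in (0, \infty)$ almost everywhere, and combined with monotonicity this actually forces $f \in (0, \infty)$ at every point of $[0, 1)$ (otherwise $f$ would vanish or blow up on a half-line of positive measure, contradicting a.e.\ finiteness and positivity). When $f$ is increasing, this gives $\essinf_{[0, 1 - \varepsilon/2]} f \ge f(0) = w(0) > 0$ and $\esssup_{[0, 1 - \varepsilon/2]} f \le f(1 - \varepsilon/2) < \infty$; when $f$ is decreasing, the roles swap but the conclusion is the same. Hence there is a constant $C_2$ (depending on $\varepsilon$ and $w$) such that $\esssup_{T_I} w \le C_2 \essinf_{T_I} w$ for every $I$ with $|I| \ge \varepsilon$.

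Combining with the hypothesis, which supplies a constant $C_1$ for intervals with $|I| < \varepsilon$, yields the lemma with constant $\max(C_1, C_2)$. I expect no real obstacle: the only mildly delicate point is the observation that a monotone weight on $\D$ with $0 < w(0) < \infty$ is automatically finite and positive everywhere on $[0, 1)$, which rules out any degeneracy in the compactness step.
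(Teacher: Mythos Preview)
Your proof is correct and essentially identical to the paper's. Both argue that for $|I|\ge\varepsilon$ the top half $T_I$ sits inside $\{|z|<1-\varepsilon/2\}$, where the monotone radial function $f$ is bounded between the finite positive values $f(0)$ and $f(1-\varepsilon/2)$, yielding a uniform constant; the paper records this constant as $A_2=f(1-\varepsilon/2)/f(0)$ (for increasing $f$) and then takes $\max(A_1,A_2)$ exactly as you do.
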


\begin{proof}
	Assume $w$ is increasing. (The proof is similar if $w$ is decreasing.) Then $w(z)=f(|z|)$, where $f : [0,1) \to [0,\infty]$ is increasing. Since $f(0)>0$ and $w<\infty$ a.e., we have $0<f(t)<\infty$ for all $t\in[0,1)$. There exists $A_1>0$ such that $f(1-\frac{t}{2}) \le A_1 f(1-t)$ for all $0<t<\varepsilon$. Let $A_2 = f(1-\frac{\varepsilon}{2}) / f(0)$. Then $f(1-\frac{t}{2}) \le A_2 f(1-t)$ for all $\varepsilon \le t \le 1$, so $f(1-\frac{t}{2}) \le \max(A_1,A_2) f(1-t)$ for all $0<t\le1$.
\end{proof}

\begin{lemma}
	\label{lem_series}
	There exists $C>0$ such that, for all $n \ge 0$,
	\begin{equation}
		\label{eq_series}
		\sum_{k=n}^\infty 2^{-k} (k+1) \le C\,2^{-n} (n+1).
	\end{equation}
\end{lemma}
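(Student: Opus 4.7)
The plan is to reduce the tail sum to a fixed convergent series by the substitution $j = k-n$, which factors out the $2^{-n}$ prefactor cleanly. Specifically, writing
\[
	\sum_{k=n}^\infty 2^{-k}(k+1)
	= 2^{-n} \sum_{j=0}^\infty 2^{-j}(j+n+1)
	= 2^{-n}\left[(n+1)\sum_{j=0}^\infty 2^{-j} + \sum_{j=0}^\infty j\,2^{-j}\right],
\]
one sees immediately that the term in brackets equals $2(n+1)+2 = 2(n+2)$, since $\sum_{j\ge0} 2^{-j} = 2$ and $\sum_{j\ge 0} j\,2^{-j} = 2$.

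Therefore $\sum_{k=n}^\infty 2^{-k}(k+1) = 2(n+2)\,2^{-n}$, and comparing with the right-hand side of \eqref{eq_series} reduces the claim to the elementary inequality $2(n+2) \le C(n+1)$ for all $n \ge 0$. Since $(n+2)/(n+1)$ is maximized at $n=0$ with value $2$, the constant $C=4$ works (any $C\ge 4$ would do, but the statement only requires existence).

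There is no real obstacle here; the only minor point to be careful about is justifying the interchange that underlies $\sum_{j\ge 0} j\,2^{-j} = 2$, which follows from the standard identity $\sum_{j\ge 0} j x^j = x/(1-x)^2$ at $x=1/2$ (absolute convergence on $|x|<1$ makes the rearrangement trivial). Alternatively, one could avoid computing $\sum j\,2^{-j}$ exactly and just bound $j+n+1 \le (n+1)(j+1)$ inside the sum, giving
\[
	\sum_{k=n}^\infty 2^{-k}(k+1) \le 2^{-n}(n+1)\sum_{j=0}^\infty (j+1)\,2^{-j} = 4\cdot 2^{-n}(n+1),
\]
which yields $C=4$ in one line without any exact summation.
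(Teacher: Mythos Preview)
Your proof is correct and, in fact, cleaner than the paper's. The paper takes a different route: it observes that $f(x)=2^{-x}(x+1)$ is decreasing for $x\ge1$, bounds the tail $\sum_{k\ge n+1}$ by the integral $\int_n^\infty 2^{-x}(x+1)\,dx$, evaluates this via integration by parts, and obtains $C=2+2/\log 2\approx 4.89$ (after adjusting to cover $n=0$). Your index shift $j=k-n$ avoids calculus entirely, reduces everything to the two standard geometric-type sums $\sum 2^{-j}$ and $\sum j\,2^{-j}$, and yields the sharp constant $C=4$ (indeed, at $n=0$ the left side equals $4$). The alternative one-line bound $j+n+1\le(n+1)(j+1)$ that you mention is especially efficient. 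Either version is a perfectly acceptable replacement for the paper's argument; yours is shorter and gives a better constant, while the paper's integral-comparison method is a more generic technique that would adapt to sequences where the sums are not computable in closed form.
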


\begin{proof}
	Let $f(x) = 2^{-x}(x+1)$. Then $f'(x) = 2^{-x}[1-(x+1)\log2] < 0$ for $x\ge1$ (since $\log2>1/2$), so $f(x)$ is decreasing for $x\ge1$. Hence, for any $n\ge1$, we have
	\[
		\sum_{k=n+1}^\infty 2^{-k}(k+1)
		\le \int_n^\infty 2^{-x}(x+1)\,dx
		= \frac{2^{-n}(n+1)}{\log2} + \frac{2^{-n}}{\log^2 2}
		\le \frac{2}{\log2}\,2^{-n}(n+1).
	\]
	Here we used integration by parts and the fact that $1/\log2 < 2$. Thus, \eqref{eq_series} holds with $C = 1+(2/\log2)$ for $n\ge1$, so it holds with $C = 2+(2/\log2)$ for $n\ge0$.
\end{proof}

The following proposition clarifies the missing case in Theorem \ref{thm_mono_1}.

\begin{proposition}
	\label{prop_dec}
	Let $w$ be a decreasing weight on $\D$ with $0<w(0)<\infty$. Then $w \in B_\infty$.
\end{proposition}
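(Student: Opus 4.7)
The plan is to reduce the two-dimensional problem to a one-dimensional statement on $(0,1]$, using the dictionary in Lemmas \ref{lem_dict} and \ref{lem_equiv}. Write $w(z) = f(1-|z|^2)$ for a measurable $f : (0,1] \to [0,\infty]$. Since $w$ is decreasing in $|z|$ and the substitution $t = 1-|z|^2$ is order-reversing on $[0,1]$, $f$ is an \emph{increasing} function on $(0,1]$. The hypothesis $0<w(0)<\infty$ gives $f(1)<\infty$, so by monotonicity $f$ is bounded, hence integrable; in particular $w$ is integrable on $\D$, so the $B_\infty$ condition makes sense.

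By Lemma \ref{lem_equiv}, it suffices to show that $f$ satisfies \ref{P7} on $(0,1]$ with the basis of intervals $(0,x]$; concretely, I need a constant $C>0$ such that
\[
	\int_0^x M(f 1_{(0,x]}) \le C \int_0^x f
\]
for every $x \in (0,1]$.

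The key step is to exploit monotonicity of $f$ to compute $M(f 1_{(0,x]})$ explicitly. By Lemma \ref{lem_dict}(c), $M(f 1_{(0,x]})(t) = \sup_{t \le y \le x} \frac{1}{y} \int_0^y f$ for $t \in (0,x]$. Because $f$ is increasing, the running average $y \mapsto \frac{1}{y}\int_0^y f$ is itself increasing on $(0,1]$ (its derivative is $\frac{1}{y}(f(y) - \frac{1}{y}\int_0^y f)\ge 0$, as $f(y)$ dominates the average of $f$ over $(0,y]$). Therefore the supremum is attained at the right endpoint $y=x$, and $M(f 1_{(0,x]})$ equals the constant $\tfrac{1}{x}\int_0^x f$ on $(0,x]$. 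Integrating over $(0,x]$ yields $\int_0^x M(f 1_{(0,x]}) = \int_0^x f$, so \ref{P7} holds with the optimal constant $C=1$.

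I do not anticipate any real obstacle here: once the one-dimensional reduction is in place, the proof is immediate from the monotonicity of $f$ and of its running average. (Alternatively, one can argue directly on $\D$: since $w$ is radial, Lemma \ref{lem_sup} gives $M(w 1_{Q_I})(z) = \sup_{J \subset I,\,z\in Q_J} w_{Q_J}$, and the same monotonicity consideration forces the supremum to be attained at $J = I$, yielding $M(w 1_{Q_I}) \equiv w_{Q_I}$ on $Q_I$ and hence $\int_{Q_I} M(w 1_{Q_I}) = w(Q_I)$.)
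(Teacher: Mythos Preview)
Your proof is correct and follows essentially the same route as the paper's: reduce to the one-dimensional weight $f$ via Lemma~\ref{lem_dict}/\ref{lem_equiv}, observe that $f$ is increasing so its running average is increasing, and conclude that $M(f1_{(0,x]})\equiv \tfrac{1}{x}\int_0^x f$ on $(0,x]$, giving \ref{P7} with constant $C=1$. The only cosmetic difference is that the paper cites parts (a) and (d) of Lemma~\ref{lem_dict} directly rather than going through Lemma~\ref{lem_equiv}.
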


\begin{proof}
	We have $w(z)=g(|z|)$, where $g:[0,1)\to(0,\infty)$ is decreasing. In particular, $w$ is bounded, hence integrable.  Let $f(t)=g(\sqrt{1-t})$ so that $w(z)=f(1-|z|^2)$ and $f:(0,1]\to(0,\infty)$ is increasing. Given $I\subset\T$, let $x=|I|(2-|I|)$. The function $y \mapsto \frac{1}{y} \int_0^y f$ is increasing, so $M(f1_{(0,x]})(t) = \frac{1}{x} \int_0^x f$ for all $t\in(0,x]$ by Lemma \ref{lem_dict} part (c). Hence $\int_0^x M(f1_{(0,x]}) = \int_0^x f$. By Lemma \ref{lem_dict} parts (a) and (d), it follows that $\int_{Q_I} M(w1_{Q_I}) = \int_{Q_I} w$.
\end{proof}

To deduce a self-improvement property for monotonic $B_p$ weights, we will need the following theorem, the proof of which is an adaptation of the proof of the self-improvement property of $A_p$ weights \cite{D}*{Corollary 7.6}.

\begin{theorem}
	\label{thm_self}
	Let $w$ be a weight on $\D$ which is almost constant on top halves.
	\begin{enumerate}[label=\upshape(\alph*)]
		\item If $w \in B_p$ for some $1<p<\infty$, then $w \in B_q$ for some $1<q<p$.
		\item If $w \in B_p$ for some $1 \le p < \infty$, then $w^{1+\varepsilon} \in B_p$ for some $\varepsilon>0$.
	\end{enumerate}
\end{theorem}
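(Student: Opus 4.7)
The plan is to adapt the classical proof of self-improvement for $A_p$ weights, as given in \cite{D}*{Corollary 7.6}, with Theorem \ref{thm_equiv_1} supplying the reverse H\"older inequality that is otherwise unavailable in the $B_p$ setting. A trivial but crucial observation is that the $AC$ property is preserved under $w \mapsto w^s$ for every $s \in \mathbb{R}$, so each real power of $w$ becomes entitled to a reverse H\"older inequality as soon as it is shown to lie in some $B_r$ class.

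For part (a), I first record the standard duality underlying the $A_p$ theory: a direct manipulation of the defining integral inequality, together with the identity $(1-p')(1-p)=1$, gives
\[
	w \in B_p \iff w^{1-p'} \in B_{p'}.
\]
Since $w^{1-p'}$ also lies in $AC$, Theorem \ref{thm_equiv_1} furnishes $s>1$ and $C>0$ with
\[
	\left(\frac{1}{|Q_I|}\int_{Q_I} w^{(1-p')s}\right)^{1/s} \le \frac{C}{|Q_I|}\int_{Q_I} w^{1-p'}
\]
for every $I \subset \T$. I then define $q\in(1,p)$ by $q' - 1 = s(p' - 1)$, so that $(1-p')s = 1-q'$ and $s(q-1) = p-1$. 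Substituting the reverse H\"older bound (raised to the $s$-th power) into the second factor of the $B_q$ expression $\bigl(\frac{1}{|Q_I|}\int_{Q_I} w\bigr)\bigl(\frac{1}{|Q_I|}\int_{Q_I} w^{1-q'}\bigr)^{q-1}$ reduces it to a constant multiple of the $B_p$ expression for $w$, yielding $w\in B_q$.

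For part (b) with $1<p<\infty$, both $w$ and $w^{1-p'}$ lie in $AC$ and in the appropriate $B_r$ class, so Theorem \ref{thm_equiv_1} gives reverse H\"older with exponents $1+\delta_1$ and $1+\delta_2$ respectively. Setting $\varepsilon=\min(\delta_1,\delta_2)$ and descending to the common exponent $1+\varepsilon$ via Jensen, I multiply the two bounds (raised to appropriate powers) to control the $B_p$ integrand for $w^{1+\varepsilon}$ by the $(1+\varepsilon)$-th power of the $B_p$ integrand for $w$, which is finite by hypothesis. The endpoint $p=1$ requires slightly different input: the inclusion $B_1\subset B_q$ for every $q>1$ follows elementarily from $\int_{Q_I} w^{1-q'} \le |Q_I|(\essinf_{Q_I} w)^{1-q'}$ combined with $w_{Q_I}\le C\essinf_{Q_I} w$. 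Thus reverse H\"older applies to $w$, giving
\[
	\bigl((w^{1+\varepsilon})_{Q_I}\bigr)^{1/(1+\varepsilon)} \le Cw_{Q_I} \le C'\essinf_{Q_I} w,
\]
and raising to the $(1+\varepsilon)$-th power yields $w^{1+\varepsilon}\in B_1$.

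I do not expect a serious obstacle, since the only tool missing from the classical $A_p$ argument is reverse H\"older, and the hypothesis $w\in AC$ supplies it via Theorem \ref{thm_equiv_1}. The main bookkeeping is in verifying the duality $w\in B_p\iff w^{1-p'}\in B_{p'}$ and in choosing $\varepsilon$ small enough that the reverse H\"older inequalities for both $w$ and $w^{1-p'}$ can be applied simultaneously with the common exponent $1+\varepsilon$.
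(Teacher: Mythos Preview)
Your proposal is correct and follows essentially the same approach as the paper's proof: apply Theorem \ref{thm_equiv_1} to obtain reverse H\"older for $w^{1-p'}$ (and, in part (b), also for $w$), then feed these bounds back into the $B_q$ or $B_p$ defining inequality exactly as in the classical $A_p$ argument. Your treatment is in fact slightly more explicit than the paper's in two places---the reduction to a common reverse H\"older exponent via Jensen in part (b), and the invocation of $B_1\subset B_q$ to justify reverse H\"older when $p=1$---but these are the same steps the paper takes tacitly.
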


\begin{proof}
	(a) The weight $w^{1-p'}$ belongs to $B_{p'}$ and is almost constant on top halves. By Theorem \ref{thm_equiv_1}, this weight has the reverse H\"older property, i.e.\ there exist $r>1$ and $C>0$ such that
	\begin{equation}
		\label{eq_self_1}
		\left(\frac{1}{|Q_I|} \int_{Q_I} w^{(1-p')r}\right)^{1/r}
		\le \frac{C}{|Q_I|} \int_{Q_I} w^{1-p'}
	\end{equation}
	for all $I\subset\T$. Let $q = ((p-1)/r) + 1$. Then $1<q<p$ and $q' = (p'-1)r + 1$, so
	\[
		\left(\frac{1}{|Q_I|} \int_{Q_I} w^{1-q'}\right)^{q-1}
		\le \left(\frac{C}{|Q_I|} \int_{Q_I} w^{1-p'}\right)^{p-1}
	\]
	for all $I\subset\T$. Since $w \in B_p$, this implies that $w \in B_q$.
	
	(b) First suppose $p>1$. Then $w$ and $w^{1-p'}$ both have the reverse H\"older property by Theorem \ref{thm_equiv_1}, so there exist $r>1$ and $C>0$ such that
	\begin{equation}
		\label{eq_self_2}
		\left(\frac{1}{|Q_I|} \int_{Q_I} w^r\right)^{1/r}
		\le \frac{C}{|Q_I|} \int_{Q_I} w
	\end{equation}
	for all $I\subset\T$ and \eqref{eq_self_1} holds for all $I\subset\T$. Since $w \in B_p$, this implies that $w^r \in B_p$.
	
	Now, suppose $p=1$. Then $w$ has the reverse H\"older property by Theorem \ref{thm_equiv_1}, i.e.\ there exist $r>1$ and $C>0$ such that \eqref{eq_self_2} holds for all $I\subset\T$. Since $w \in B_1$ and
	\[
		(\essinf_{Q_I} w)^r = \essinf_{Q_I}(w^r)
	\]
	for all $I\subset\T$, this implies that $w^r \in B_1$.
\end{proof}

\section{Proof of Theorem \ref{thm_equiv_2}}
\label{sec_equiv}

Let us collect a few facts about arbitrary weights on $\D$.

\begin{proposition}
	\label{prop_arb_1}
	For arbitrary weights on $\D$, the following implications hold:
	\begin{enumerate}[label=\upshape(\alph*)]
		\item $\PP 1 \implies \PP 2 \implies \PP 5 \implies \PP 4$.
		\item $\PP 8 \implies \PP 3 \implies \PP 6 \implies \PP 4$.
		\item $\PP 4 \implies \PP 7$.
	\end{enumerate}
\end{proposition}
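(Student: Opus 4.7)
The plan is to establish each of the six implications directly; all are classical in $A_\infty$ theory, and analogues for general bases are already proved in~\cite{DMO1}, so one could alternatively simply cite that reference.

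For chain (a): \PP1 $\Rightarrow$ \PP2 follows by applying Jensen's inequality to $\log(w^{1-p'})$ inside the \PP1 hypothesis and using $(p-1)(p'-1)=1$. For \PP2 $\Rightarrow$ \PP5, I use scale-invariance to normalize $m(w;B)=1$; then $\mu(\{w>1\})\le\mu(B)/2$, while $\int_{\{w\le1\}}\log w\le0$, and applying Jensen to $\int_{\{w>1\}}\log w$ bounds $\exp(\mu(B)^{-1}\int_B\log w)$ by a constant multiple of $\sqrt{w_B}$, which combined with \PP2 forces $w_B$ to be bounded. Finally \PP5 $\Rightarrow$ \PP4 follows via the equivalent formulation \PP{4'} from~\cite{DMO1}: if $w_B\le C\,m(w;B)$, then $\{w\le w_B/(2C)\}\subset\{w<m(w;B)\}$ has measure at most $\mu(B)/2$.

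For chain (b): \PP8 $\Rightarrow$ \PP3 uses the layer-cake identity $\int_B w^q\,d\mu=(q-1)\int_0^\infty t^{q-2}w(\{w>t\})\,dt$. The low part ($t\le w_B$) is bounded by $\mu(B)\,w_B^q$, while the high part, after applying \PP8 and substituting $s=\beta t$, is bounded by $C(q-1)\beta^{-q}q^{-1}\int_B w^q$; since this prefactor tends to $0$ as $q\to1^+$, it can be absorbed for $q$ sufficiently close to $1$. Next, \PP3 $\Rightarrow$ \PP6 uses $\log^+ s\le s^{q-1}/(q-1)$ to obtain $\int_B w\log^+(w/w_B)\le(q-1)^{-1}w_B^{1-q}\int_B w^q$, which is then bounded by $C\,w(B)$ via \PP3. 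For \PP6 $\Rightarrow$ \PP4, given $E\subset B$ with $\mu(E)<\alpha\mu(B)$, split $w(E)\le\lambda w_B\mu(E)+w(\{w>\lambda w_B\})$; the second summand is bounded by $Cw(B)/\log\lambda$ via \PP6 (since $\log^+(w/w_B)\ge\log\lambda$ on $\{w>\lambda w_B\}$), and choosing $\lambda$ large and $\alpha$ small forces $w(E)<w(B)/2$.

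The main obstacle is (c): \PP4 $\Rightarrow$ \PP7. Here I work with \PP{4'} and the operator $M_Iw$ of Lemma~\ref{lem_sup}, which reduces the task to showing $\int_{Q_I}M_Iw\,d\mu\le C\,w(Q_I)$. By layer cake, the levels $\lambda\le w_{Q_I}$ contribute at most $w(Q_I)$. For $\lambda>w_{Q_I}$, a Calder\'on--Zygmund--Vitali type selection produces a pairwise disjoint family $\{Q_j\}$ of sub-Carleson squares of $Q_I$ with $w_{Q_j}>\lambda$ whose union covers $\{z\in Q_I:M_Iw(z)>\lambda\}$ up to a uniform overlap constant. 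Applying \PP{4'} inside each $Q_j$ yields $\mu(\{w>\alpha\lambda\}\cap Q_j)\ge(1-\beta)\mu(Q_j)$; summing over $j$ and then integrating in $\lambda$ via the substitution $s=\alpha\lambda$ produces the required bound, after which Lemma~\ref{lem_sup} converts it to $\int_{Q_I}M(w1_{Q_I})\,d\mu\le C\,w(Q_I)$. The delicate point is that Carleson squares do not form a dyadic tree, so a straightforward maximality argument is unavailable; the Vitali-type selection (or, equivalently, passing to a dyadic grid of Carleson squares at the cost of a constant factor) is what makes the stopping-time argument go through.
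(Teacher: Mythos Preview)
Your proposal is correct and follows essentially the same route as the paper: the paper simply cites \cite{DMO1}*{Theorem~4.1} for parts (a) and (b) and \cite{DMO1}*{Theorem~6.1} (adapted via Lemma~\ref{lem_sup}) for part (c), while you unpack those arguments explicitly. The direct proofs you sketch are the standard ones from \cite{DMO1}, and your use of Lemma~\ref{lem_sup} to reduce $M(w1_{Q_I})$ to the localized operator $M_Iw$ before running the Vitali/stopping-time covering is exactly the adaptation the paper alludes to.
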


\begin{proof}
	Parts (a) and (b) follow immediately from Theorem 4.1 in \cite{DMO1}. Part (c) can be proved in the same way as Theorem 6.1 in \cite{DMO1}, which states that \ref{P4} implies \ref{P7} for the basis of Carleson cubes in the upper half-space. The proof of the two-dimensional case can easily be adapted to the basis of Carleson squares in the unit disc, thanks to Lemma \ref{lem_sup}.
\end{proof}

\begin{proposition}
	\label{prop_arb_2}
	For arbitrary weights on $\D$, the following inclusions hold:
	\begin{enumerate}[label=\upshape(\alph*)]
		\item For any $1<p<q<\infty$, we have $B_1 \subset B_p \subset B_q \subset RJ$.
		\item We have $RJ \subset B_\infty$ and $RH \subset B_\infty$.
	\end{enumerate}
\end{proposition}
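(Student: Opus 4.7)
My plan is to establish \textup{(a)} by direct computations using H\"older's and Jensen's inequalities, and to derive \textup{(b)} by chaining implications already supplied by Proposition \ref{prop_arb_1}, exploiting the identifications $RJ = \PP 2$, $RH = \PP 3$, and $B_\infty = \PP 7$ noted in the paper.

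For the chain $B_1 \subset B_p \subset B_q \subset RJ$ in \textup{(a)}, I would proceed stepwise. For $B_1 \subset B_p$, the $B_1$ estimate gives an a.e.\ lower bound $w \ge w_{Q_I}/C$ on $Q_I$, which translates to an a.e.\ upper bound $w^{1-p'} \le C^{p'-1} w_{Q_I}^{1-p'}$; averaging and raising to the power $p-1$, the identity $(p'-1)(p-1)=1$ collapses the product of averages to a constant. For $B_p \subset B_q$ with $p<q$, I would rewrite $w^{1-q'}$ as $(w^{1-p'})^{s}$ with $s=(q'-1)/(p'-1) \in (0,1)$, apply Jensen's inequality for the concave map $x \mapsto x^s$, and then raise the resulting bound to the power $q-1$, relying on the identity $s(q-1)=p-1$ to reduce the $B_q$ constant to the $B_p$ constant. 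Finally, for $B_q \subset RJ$, Jensen's inequality for $\log$ turns the average of $w^{1-q'}$ into $\exp\bigl((1-q')\,\tfrac{1}{|Q_I|}\int_{Q_I}\log w\bigr)$; the identity $(1-q')(q-1)=-1$ then makes the $B_q$ bound read precisely as the reverse Jensen inequality.

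For part \textup{(b)}, I would simply invoke Proposition \ref{prop_arb_1} twice. The chain $\PP 2 \implies \PP 5 \implies \PP 4 \implies \PP 7$, obtained from parts \textup{(a)} and \textup{(c)} of that proposition, gives $RJ \subset B_\infty$; and the chain $\PP 3 \implies \PP 6 \implies \PP 4 \implies \PP 7$, obtained from parts \textup{(b)} and \textup{(c)}, gives $RH \subset B_\infty$.

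No step in this plan presents a genuine obstacle; the argument is essentially bookkeeping. The only place that demands a moment of care is the exponent arithmetic in \textup{(a)}—specifically the identities $(p'-1)(p-1)=1$ and $s(q-1)=p-1$—which must match up cleanly so that the constants telescope down to exactly the desired form.
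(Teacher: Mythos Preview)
Your proposal is correct and follows essentially the same route as the paper: the paper dispatches the first two inclusions in (a) by citing the standard $A_p$ argument (which is exactly the H\"older/Jensen computation you outline), and it obtains the last inclusion in (a) together with all of (b) by invoking Proposition~\ref{prop_arb_1}, just as you do for (b). The only cosmetic difference is that for $B_q \subset RJ$ you write out the Jensen step explicitly rather than citing $\PP1 \Rightarrow \PP2$ from Proposition~\ref{prop_arb_1}, but that implication is proved in \cite{DMO1} by precisely the same argument.
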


\begin{proof}
	The first two inclusions in part (a) can be proved in the same way as the corresponding inclusions for $A_p$ weights (see e.g.\ \cite{D}*{Proposition 7.2}). The last inclusion in part (a) and the two inclusions in part (b) follow immediately from Proposition \ref{prop_arb_1}.
\end{proof}

The missing piece in the proof of Theorem \ref{thm_equiv_2} is provided by the following proposition, whose proof is inspired by that of Theorem IV in \cite{CF}.

\begin{proposition}
	\label{prop_P4P8}
	Let $w$ be a weight on $\D$ which is almost constant on top halves. If $w\in\PP4$, then $w\in\PP8$.
\end{proposition}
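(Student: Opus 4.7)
The plan is to adapt the classical Calder\'on--Zygmund stopping time argument from \cite{CF}, substituting the $AC$ hypothesis for the doubling property of Lebesgue measure. Fix $I \subset \T$ and $\lambda > w_{Q_I}$, and perform a dyadic stopping-time decomposition of $Q_I$: starting from $I$, repeatedly bisect to build the natural dyadic tree of sub-intervals of $I$, and extract the family $\{Q_{J_k}\}_k$ of pairwise disjoint Carleson sub-squares of $Q_I$ that are maximal with respect to the property $w_{Q_{J_k}} > \lambda$. Each parent $Q_{\tilde J_k}$ in the tree satisfies $w_{Q_{\tilde J_k}} \le \lambda$ by maximality, and the easy estimate $|Q_{\tilde J_k}| \le 4|Q_{J_k}|$ (computable directly from $|Q_J| = |J|^2(2-|J|)$) yields the crucial upper bound $w(Q_{J_k}) \le 4\lambda|Q_{J_k}|$.

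Next, I show that $w \le 4C_0\lambda$ a.e.\ on $Q_I \setminus \bigsqcup_k Q_{J_k}$, where $C_0$ is the $AC$ constant. This is where the side condition is indispensable. Writing $Q_I$ as the essentially disjoint union $\bigsqcup_J T_J$ of top halves with $J$ ranging over the dyadic tree rooted at $I$, a.e.\ point $z \in Q_I \setminus \bigsqcup_k Q_{J_k}$ lies in some $T_J$ with $w_{Q_J} \le \lambda$. The bound $|Q_J| \le 4|T_J|$ forces $w_{T_J} \le 4\lambda$, and almost constancy on $T_J$ then upgrades this to the pointwise bound $w(z) \le C_0 w_{T_J} \le 4C_0\lambda$. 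Hence $\{z \in Q_I : w(z) > 4C_0\lambda\} \subseteq \bigsqcup_k Q_{J_k}$ up to a set of measure zero.

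To conclude, I apply the equivalent form \PP{4'} of \PP{4} (see \cite{DMO1}) on each stopping square. The resulting constants $\alpha_0,\beta_0 \in (0,1)$ satisfy $|\{w \le \alpha_0 w_{Q_{J_k}}\} \cap Q_{J_k}| \le \beta_0 |Q_{J_k}|$, and since $w_{Q_{J_k}} > \lambda$ this gives $|Q_{J_k}| \le (1-\beta_0)^{-1} |\{w > \alpha_0\lambda\} \cap Q_{J_k}|$. Summing over $k$ and combining with the mass bound on each stopping square,
\[
	w\bigl(\{w > 4C_0\lambda\} \cap Q_I\bigr) \le \frac{4\lambda}{1-\beta_0}\,\bigl|\{w > \alpha_0\lambda\} \cap Q_I\bigr|.
\]
Setting $\mu = 4C_0\lambda$ recovers \PP{8} for every $\mu > 4C_0 w_{Q_I}$, with $\beta = \alpha_0/(4C_0)$ and $C = (C_0(1-\beta_0))^{-1}$. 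The remaining range $w_{Q_I} < \mu \le 4C_0 w_{Q_I}$ is handled directly: the estimate $w(\{w>\mu\} \cap Q_I) \le w(Q_I) \le \mu|Q_I|$, combined with \PP{4'} applied to $Q_I$ itself (to bound $|Q_I|$ by a fixed multiple of $|\{w > \beta\mu\} \cap Q_I|$), produces the same shape of inequality after possibly enlarging $C$.

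The main obstacle is the pointwise control of $w$ on the complement of the stopping region. In the Euclidean setting, this is immediate from Lebesgue differentiation for cubes, but for Carleson squares the squares containing a point $z$ do not shrink to $z$, so Lebesgue differentiation is not directly available. The $AC$ hypothesis bridges precisely this gap, converting the averaged smallness of $w$ on an enclosing top half into a pointwise bound almost everywhere; the remaining bookkeeping is a careful tracking of the constants (the factor of $4$ lost in passing from $Q_J$ to $T_J$ and from $Q_J$ to its dyadic parent, and the factor $C_0$ from almost constancy) so that they assemble into a single $\beta$ and $C$ valid for all $\mu > w_{Q_I}$.
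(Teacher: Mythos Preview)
Your proof is correct and follows essentially the same approach as the paper's: a dyadic Calder\'on--Zygmund stopping-time decomposition inside $Q_I$, pointwise control of $w$ off the stopping region via the $AC$ hypothesis and the bound $|Q_J|\le 4|T_J|$, and \PP{4'} applied on each selected square to produce the measure lower bound. The only organizational difference is that the paper runs the stopping time at the rescaled level $\lambda/(4C)$ (so the root $I$ may itself be selected, handled as their Case~1), whereas you stop at level $\lambda$ and treat the range $w_{Q_I}<\mu\le 4C_0\,w_{Q_I}$ separately at the end; the two bookkeeping choices are equivalent.
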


\begin{proof}
	Recall that \ref{P4} is equivalent to \ref{P4'}. Applying the function $t \mapsto \mu(B)-t$ to both sides of the inequality in \ref{P4'}, we see that there exist $\alpha,\beta\in(0,1)$ such that, for every $I\subset\T$,
	\begin{equation}
		\label{eq_P4P8_1}
		\alpha|Q_I| \le |\{z \in Q_I : w(z) > \beta w_{Q_I}\}|.
	\end{equation}
	Since $w \in AC$, there exists $C \ge 1$ such that $w(z) \le C w_{T_I}$ for every $I\subset\T$ and a.e.\ $z \in T_I$. For any $I\subset\T$, we have $w(T_I) \le w(Q_I)$ and $|Q_I| \le 4|T_I|$, so $w_{T_I} \le 4w_{Q_I}$ and hence $w(z) \le 4Cw_{Q_I}$ for a.e.\ $z \in T_I$.
	
	Given $I\subset\T$ and $\lambda>w_{Q_I}$, let $\DD(I)$ be the set of all dyadic descendants of $I$. Then $Q_I = \bigsqcup_{J \in \DD(I)} T_J$. For $z \in Q_I$, define the dyadic maximal function
	\[
		M_{I,d}w(z) = \sup_{J \in \DD(I)\,:\,z \in Q_J} w_{Q_J}.
	\]
	Note that $M_{I,d}w$ is constant on $T_J$ for all $J \in \DD(I)$. For each $J \in \DD(I)$ and a.e.\ $z \in T_J$, we have $w(z) \le 4Cw_{Q_J}$ and $w_{Q_J} \le M_{I,d}w(z)$, so $w(z) \le 4CM_{I,d}w(z)$. Consequently, this last inequality holds for a.e.\ $z \in Q_I$. It follows that
	\[
		w(\{z \in Q_I : w(z)>\lambda\}) \le w(\{z \in Q_I : M_{I,d}w(z)>\lambda'\}),
	\]
	where $\lambda'=\lambda/(4C)$.
	
	Let $\LL$ be the set of all maximal intervals $L \in \DD(I)$ such that $w_{Q_L}>\lambda'$. Then we have the following decomposition:
	\begin{equation}
		\label{eq_P4P8_2}
		\{z \in Q_I : M_{I,d}w(z)>\lambda'\} = \bigsqcup_{L \in \LL} Q_L.
	\end{equation}
	
	Case 1: Suppose $I \in \LL$. Then $\LL = \{I\}$, so \eqref{eq_P4P8_2} gives
	\begin{align*}
		w(\{z \in Q_I : M_{I,d}w(z)>\lambda'\})
		&= w(Q_I)
		\le \lambda |Q_I| \\
		&\le \lambda \alpha^{-1} |\{z \in Q_I : w(z) > \beta w_{Q_I}\}| \\
		&\le \lambda \alpha^{-1} |\{z \in Q_I : w(z) > \beta \lambda'\}| \\
		&= \lambda \alpha^{-1} |\{z \in Q_I : w(z) > \beta' \lambda\}|,
	\end{align*}
	where $\beta' = \beta/(4C)$. Here we used $\lambda > w_{Q_I}$, \eqref{eq_P4P8_1}, and $w_{Q_I} > \lambda'$.
	
	Case 2: Suppose $I \notin \LL$. Then, for every $L \in \LL$, the dyadic parent $K$ of $L$ does not belong to $\LL$, i.e.\ $w_{Q_K} \le \lambda'$. Since $w(Q_L) \le w(Q_K)$ and $|Q_K| \le 4|Q_L|$, we have $w_{Q_L} \le 4w_{Q_K}$, so $w_{Q_L} \le 4\lambda'$. By \eqref{eq_P4P8_2},
	\begin{align*}
		w(\{z \in Q_I : M_{I,d}w(z)>\lambda'\})
		&= \sum_{L \in \LL} w(Q_L)
		\le 4\lambda' \sum_{L \in \LL} |Q_L| \\
		&\le 4\lambda'\alpha^{-1} \sum_{L \in \LL} |\{z \in Q_L : w(z) > \beta w_{Q_L}\}| \\
		&\le 4\lambda'\alpha^{-1} |\{z \in Q_I : w(z) > \beta w_{Q_L}\}| \\
		&\le 4\lambda'\alpha^{-1} |\{z \in Q_I : w(z) > \beta\lambda'\}| \\
		&\le \lambda\alpha^{-1} |\{z \in Q_I : w(z)>\beta'\lambda\}|,
	\end{align*}
	where $\beta' = \beta/(4C)$. Here we used $w_{Q_L} \le 4\lambda'$, \eqref{eq_P4P8_1}, $w_{Q_L}>\lambda'$, and $C\ge1$.
\end{proof}

Now we are ready to prove Theorem \ref{thm_equiv_2}.

\begin{proof}[Proof of Theorem \textup{\ref{thm_equiv_2}}]
	Theorem \ref{thm_equiv_1} states that, for weights on $\D$ which are almost constant on top halves, \ref{P1}, \ref{P2}, \ref{P3} and \ref{P7} are equivalent. Parts (a) and (c) of Proposition \ref{prop_arb_1} allow us to add \ref{P4} and \ref{P5} to this list. Part (b) of Proposition \ref{prop_arb_1} and Proposition \ref{prop_P4P8} allow us to add \ref{P6} and \ref{P8} to this list.
\end{proof}

\section{Proof of Theorem \ref{thm_mono_1}}
\label{sec_mono}

Theorem \ref{thm_mono_1} is an immediate consequence of Proposition \ref{prop_arb_1} and Theorem \ref{thm_mono_2}, so it suffices to prove Theorem \ref{thm_mono_2}. We will need the following four lemmas.

\begin{lemma}
	\label{lem_P7_1}
	Let $u : (0, 1) \to (1, \infty)$ be a function such that $\lim_{t \to 0} u(t) = \infty$ and such that, for some $b>0$ and all $0<t<1$, we have $tu(t) \le b$. Then, for every $C > 0$, there exists $0 < \delta < 1$ such that $a \coloneq u(\delta)$ satisfies the following: For every function $f : (0, a) \to (0, \infty)$ which is integrable, decreasing and satisfies $f(1/2) = 1$, if $f(1) < \delta$, then
	\[
		\int_0^a \left( \frac{1}{y} \int_0^y f \right)\,dy > C \int_0^a f.
	\]
\end{lemma}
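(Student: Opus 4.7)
The plan is to exchange the order of integration, so the iterated integral becomes a single integral weighted by $\log(a/t)$, and then to estimate the numerator and denominator of the ratio
\[
	R \coloneq \frac{\int_0^a f(t)\log(a/t)\,dt}{\int_0^a f(t)\,dt}.
\]
By Fubini's theorem (using that $\log(a/t)>0$ for $0<t<a$),
\[
	\int_0^a \frac{1}{y}\int_0^y f(t)\,dt\,dy = \int_0^a f(t) \int_t^a \frac{dy}{y}\,dt = \int_0^a f(t)\log(a/t)\,dt,
\]
so the conclusion is equivalent to $R>C$.

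Because $f$ is decreasing with $f(1/2)=1$, I have $f\ge 1$ on $(0,1/2]$ and $f\le 1$ on $[1/2,a)$; and the additional hypothesis $f(1)<\delta$ gives $f\le \delta$ on $[1,a)$. For the numerator I would discard the nonnegative contribution from $(1/2,a)$ and use $\log(a/t)\ge \log(2a)\ge \log a$ on $(0,1/2)$ (which requires $a>1$) to get
\[
	\int_0^a f(t)\log(a/t)\,dt \ge (\log a) \int_0^{1/2} f.
\]
For the denominator I would split into three pieces; the middle piece satisfies $\int_{1/2}^1 f \le 1/2$, and for the tail I would use the hypothesis $tu(t)\le b$, which gives $\delta a = \delta u(\delta) \le b$ and hence
\[
	\int_1^a f \le \delta(a-1) \le b.
\]
Thus $\int_0^a f \le \int_0^{1/2} f + \tfrac{1}{2} + b$.

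Combining these bounds and using that the map $x\mapsto x/(x+c)$ is increasing in $x>0$ for $c>0$, together with $\int_0^{1/2} f \ge 1/2$ (again from $f\ge 1$ on $(0,1/2]$), I obtain
\[
	R \ge (\log a)\cdot\frac{\int_0^{1/2} f}{\int_0^{1/2} f + \tfrac{1}{2} + b} \ge \frac{\log a}{2(1+b)}.
\]
To finish, given $C>0$, I would use $\lim_{t\to 0}u(t)=\infty$ to pick $\delta\in(0,1)$ small enough that $a\coloneq u(\delta) > e^{2C(1+b)}$, which forces $R>C$.

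The main conceptual point (and the only genuine difficulty) is recognizing the role of the side condition $tu(t)\le b$: it is precisely what prevents the tail $\int_1^a f$ from blowing up with $a$, since the pointwise bound $f\le\delta$ on $(1,a)$ only becomes useful when multiplied by an interval length controlled by $b/\delta$. Once this is in place, everything else is straightforward bookkeeping, and no delicate estimate on the unbounded portion of $f$ near $0$ is needed, because $\int_0^{1/2} f$ appears on both sides and cancels favourably through the monotonicity of $x/(x+c)$.
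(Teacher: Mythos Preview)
Your proof is correct and follows essentially the same approach as the paper's: both bound the numerator below by a quantity of order $(\log a)\int_0^{1/2}f$ and the denominator above by $2(1+b)\int_0^{1/2}f$, using $f\ge1$ on $(0,1/2]$, $f\le1$ on $[1/2,1]$, and the key observation $\delta a\le b$ to control the tail. The only cosmetic difference is that you first apply Fubini to rewrite the iterated integral as $\int_0^a f(t)\log(a/t)\,dt$, whereas the paper works directly with the monotonicity of $y\mapsto\frac{1}{y}\int_0^y f$; both routes yield the same final bound $R\ge \frac{\log a}{2(1+b)}$ (the paper gets the marginally sharper $\frac{1+\log(2a)}{2(1+b)}$, which is irrelevant here).
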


\begin{proof}
	Choose $0<\delta<1$ small enough that $a \coloneq u(\delta)$ satisfies $1+\log(2a) > 2(1+b)C$. Suppose $f:(0,a)\to(0,\infty)$ is integrable and decreasing, $f(1/2)=1$, and $f(1)<\delta$. Then the function $y \mapsto \frac{1}{y} \int_0^y f$ is decreasing, so
	\[
		\int_0^a \biggl( \frac{1}{y} \int_0^y f \biggr)\,dy
		\ge \int_0^{1/2} \biggl( \frac{1}{1/2} \int_0^{1/2} f \biggr)\,dy
		+ \int_{1/2}^a \biggl( \frac{1}{y} \int_0^{1/2} f \biggr)\,dy
		= (1+\log(2a)) \int_0^{1/2} f.
	\]
	Since $\int_0^{1/2} f \ge 1/2$ and $(a-1)f(1) < a\delta \le b$, we have
	\[
		\int_0^a f
		\le \int_0^{1/2} f + \frac{1}{2}\,f\left(\frac{1}{2}\right) + (a-1) f(1)
		\le 2(1+b) \int_0^{1/2} f.
	\]
	Together, these two estimates imply that
	\[
		\frac{\int_0^a (\frac{1}{y} \int_0^y f)\,dy}{\int_0^a f}
		\ge \frac{1+\log(2a)}{2(1+b)}
		> C. \qedhere
	\]
\end{proof}

\begin{lemma}
	\label{lem_P7_2}
	Let $f : (0, 1) \to (0, \infty)$ be integrable, decreasing and bounded below by some $b > 0$. Suppose there exists $C > 0$ such that, for all $0<x<1$,
	\[
		\int_0^x \left( \frac{1}{y} \int_0^y f \right)\,dy \le C \int_0^x f.
	\]
	Then there exists $A > 0$ such that $f(x/2) \le A f(x)$ for all $0 < x < 1$.
\end{lemma}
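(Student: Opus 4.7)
The plan is to perform a dyadic analysis, reducing the problem to bounding the ratios $A_{k+1}/A_k$, where $A_k \coloneq f(2^{-k})$ for $k \ge 0$. Since $f$ is decreasing, $(A_k)$ is increasing; the heart of the argument will be to bound $A_{n+1}/A_n$ uniformly for $n$ sufficiently large, and then use monotonicity together with the lower bound $f \ge b$ to handle the remaining non-dyadic and ``large-scale'' values of $x$.

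First, I would apply the hypothesis at $x = 2^{-m}$ for each $m \ge 1$, decomposing $(0, 2^{-m}]$ into the dyadic intervals $(2^{-k-1}, 2^{-k}]$ with $k \ge m$. On each such interval $f(t) \in [A_k, A_{k+1}]$ and $\log(2^{-m}/t) \ge (k-m)\log 2$. Bounding the LHS of the hypothesis from below via the former and the RHS from above via the latter, and then substituting $j = k - m$, the hypothesis becomes
\[
\log 2 \sum_{j \ge 1} j\,2^{-j} A_{j+m} \le 2C \sum_{j \ge 1} 2^{-j} A_{j+m}.
\]
Now I choose $N$ with $N\log 2 > 2C$ (for instance $N = \lceil 2C/\log 2\rceil + 1$) and split the sum at $j = N$. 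Using that $(A_{j+m})$ is increasing, the $j \ge N$ contribution is at least $A_{m+N} \cdot 2^{-N+2}\log 2$ (via the identity $\sum_{j \ge N} j\,2^{-j} = 2^{-N+1}(N+1)$, in the spirit of Lemma \ref{lem_series}), while the $j < N$ contribution is at most $2CA_{m+N-1}$. Rearranging yields
\[
A_{m+N}/A_{m+N-1} \le B \coloneq C\,2^{N-1}/\log 2
\]
for every $m \ge 1$; equivalently, $f(y/2)/f(y) \le B$ at every dyadic scale $y = 2^{-n}$ with $n \ge N$.

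Finally I would promote this to a uniform bound. For a non-dyadic $x \in (2^{-n-1}, 2^{-n})$ with $n \ge N$, monotonicity of $f$ gives $f(x/2)/f(x) \le A_{n+2}/A_n$, and two applications of the previous step yield $A_{n+2}/A_n \le B^2$. For the finitely many ``large'' scales $x \in [2^{-N}, 1)$, the estimate $f(x/2) \le A_{N+1}$ (a finite number depending on the specific $f$) combined with $f(x) \ge b > 0$ gives $f(x/2)/f(x) \le A_{N+1}/b$. Setting $A = \max(B^2, A_{N+1}/b)$ completes the proof. The main obstacle will be cleanly extracting the dyadic sum inequality from the hypothesis and verifying that the constants conspire in the right direction after one splits at $j = N$; once that is in place, the remainder is routine geometric-series manipulation and careful index bookkeeping.
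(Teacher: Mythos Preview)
Your approach is genuinely different from the paper's and, at heart, sound. The paper argues by contradiction: assuming the conclusion fails, it produces points $x_n$ with $f(x_n/2)/f(x_n)$ arbitrarily large, builds an auxiliary function $u$, and invokes Lemma~\ref{lem_P7_1} on a rescaled copy of $f$ to violate the hypothesis. Your argument is direct: rewrite the hypothesis at $x=2^{-m}$ (via Fubini) as the dyadic inequality
\[
\log 2 \sum_{j\ge 1} j\,2^{-j} A_{j+m} \;\le\; 2C \sum_{j\ge 1} 2^{-j} A_{j+m},
\]
then extract a uniform bound on $A_{m+N}/A_{m+N-1}$ and patch up the remaining $x$ by monotonicity and the lower bound $f\ge b$. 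This is more elementary and self-contained; it bypasses Lemma~\ref{lem_P7_1} entirely.

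There is, however, a real (if easily repaired) gap at the ``split at $j=N$'' step. As written, you bound the $j\ge N$ part of the \emph{left} side below by $A_{m+N}\cdot 2^{-N+2}\log 2$ and the $j<N$ part of the \emph{right} side above by $2C A_{m+N-1}$, and then ``rearrange''. But the $j\ge N$ part of the right side, namely $2C\sum_{j\ge N}2^{-j}A_{j+m}$, has not been controlled, and it can be large since $A_{j+m}\to\infty$ is not excluded. The fix is exactly what your choice $N\log 2>2C$ is designed for: for $j\ge N$ one has $j\log 2>2C$, so the tail of the right side can be \emph{absorbed} into the tail of the left. Concretely, from
\[
\sum_{j\ge 1}(j\log 2-2C)\,2^{-j}A_{j+m}\le 0
\]
one gets
\[
\sum_{j\ge N}(j\log 2-2C)\,2^{-j}A_{j+m}\;\le\;\sum_{j=1}^{N-1}(2C-j\log 2)\,2^{-j}A_{j+m}\;\le\;2C\,A_{m+N-1},
\]
and the left side is $\ge A_{m+N}\cdot 2^{1-N}\bigl((N+1)\log 2-2C\bigr)$. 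This yields $A_{m+N}/A_{m+N-1}\le B'$ for a constant $B'$ depending only on $C$ and $N$ (your stated value $B=C\,2^{N-1}/\log 2$ is not quite what falls out). After this correction, your endgame---bounding $f(x/2)/f(x)\le A_{n+2}/A_n\le B'^2$ for $x\in(2^{-n-1},2^{-n})$ with $n\ge N$, and using $f(x/2)\le A_{N+1}$, $f(x)\ge b$ for $x\in[2^{-N},1)$---is correct.
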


\begin{proof}
	Suppose not. Then, for every $n \in \mathbb N$, there exists $0 < x_n < 1$ such that $f(x_n/2) > A_n f(x_n)$, where $A_n = 2^n b^{-1} f(2^{-n-1})$. We have $f(x/2) \le f(2^{-n-1}) = 2^{-n} A_n b \le A_n f(x)$ for all $x \ge 2^{-n}$, so $x_n < 2^{-n}$. Note that $1 < A_1 < A_2 < \cdots$ and $A_n \to \infty$. Hence we may define $u : (0, 1] \to [1, \infty)$ as follows: Set $u(1) = 1$ and $u(A_n^{-1}) = 2^n$ for all $n \ge 1$, and let $u$ be linear on each intervening interval. Then $u$ is strictly decreasing and $\lim_{t \to 0} u(t) = \infty$. Note the following:
	\begin{enumerate}[label=(\roman*)]
		\item For $A_1^{-1} \le t \le 1$, we have $tu(t) \le u(A_1^{-1}) = 2$.
		\item For $A_{n+1}^{-1} \le t \le A_n^{-1}$, we have $tu(t) \le A_n^{-1} u(A_{n+1}^{-1}) = 2b/f(2^{-n-1}) \le 2$.
	\end{enumerate}
	Thus, $tu(t) \le 2$ for all $0 < t \le 1$.
	
	Now, let $\delta$ be as given by Lemma \ref{lem_P7_1}, and let $a = u(\delta)$. Choose $n \in \mathbb N$ large enough that $A_n > 1/\delta$. Let $g(t) = f(x_n t)/f(x_n/2)$ for $0 < t < a$. (Note that $x_n a < 1$ since $x_n < 2^{-n}$ and $a < u(A_n^{-1}) = 2^n$.) Then $g(1) < A_n^{-1} < \delta$, so
	\[
		C < \frac{\int_0^a (\frac{1}{y} \int_0^y g)\,dy}{\int_0^a g}
		= \frac{\int_0^a (\frac{1}{x_ny} \int_0^{x_ny} f)\,dy}{\frac{1}{x_n} \int_0^{x_na} f}
		= \frac{\int_0^{x_na} (\frac{1}{y} \int_0^y f)\,dy}{\int_0^{x_na} f}.
	\]
	This is a contradiction.
\end{proof}

\begin{lemma}
	\label{lem_P4_1}
	For every $\alpha,\beta \in (0,1)$, there exists $0<\delta<1$ such that, for every function $f : (0, 2) \to (0, \infty)$ which is increasing and satisfies $f(1) = 1$, if $f(1/2) < \delta$, then there exists $0<x<2$ such that
	\[
		\int_{(1-\alpha)x}^x f > \beta \int_0^x f.
	\]
\end{lemma}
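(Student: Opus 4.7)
The plan is to argue by contradiction. Suppose that for every $x \in (0, 2)$ we have $\int_{(1-\alpha)x}^x f \le \beta \int_0^x f$. Writing $F(x) = \int_0^x f$, this is equivalent to $F((1-\alpha)x) \ge (1-\beta) F(x)$ for all $x \in (0,2)$, and a straightforward induction yields $F((1-\alpha)^n x) \ge (1-\beta)^n F(x)$ for every $n \ge 0$ and $x \in (0, 2)$. I plan to iterate enough times to push the argument into the region $(0, 1/2]$ where $f$ is controlled by $\delta$, and then combine this with the lower bound $F(x) \ge x - 1$ near $x = 2$ to force a contradiction.

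Specifically, I would set $N = \lceil \log 4 / \lvert \log(1 - \alpha)\rvert \rceil$, so that $(1 - \alpha)^N \le 1/4$. For any $x \in [1, 2)$, this gives $(1 - \alpha)^N x \le 1/2$, and the monotonicity of $f$ together with $f(1/2) < \delta$ yields $F((1 - \alpha)^N x) \le (1 - \alpha)^N x \cdot f(1/2) < \delta/2$. On the other hand, $f \ge f(1) = 1$ on $[1, x)$ forces $F(x) \ge x - 1$. Substituting both estimates into the iterated inequality gives $(1 - \beta)^N (x - 1) < \delta/2$. Letting $x \to 2^-$ then yields $(1 - \beta)^N \le \delta/2$. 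If one sets $\delta \coloneq (1 - \beta)^N \in (0, 1)$ at the outset, this forces $(1-\beta)^N \le (1-\beta)^N/2$, the required contradiction.

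The hard part will be choosing $N$ correctly: it needs to be large enough that $(1-\alpha)^N x$ lands in $(0, 1/2]$ (so that $f$ is bounded by $\delta$), yet the argument must still allow $x$ to be taken close to $2$ so that the lower bound $F(x) \ge x - 1$ is non-trivial. The particular value $N = \lceil \log 4 / \lvert \log(1 - \alpha) \rvert \rceil$ achieves both demands uniformly in $f$, and the remaining estimates are straightforward consequences of monotonicity. No case distinction between $\alpha < \beta$ and $\alpha \ge \beta$ is needed; the same single choice of $\delta$ works for every pair $(\alpha, \beta) \in (0,1)^2$.
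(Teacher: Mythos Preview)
Your argument is correct and takes a genuinely different route from the paper's. The paper partitions $[1/2,1]$ into $n$ equal pieces (with $2/n\le\alpha$), uses the pigeonhole principle to find a subinterval on which $f$ jumps by a factor exceeding $\delta^{-1/n}$, and then exhibits a specific $x$ (the right endpoint of a slightly enlarged subinterval) for which the inequality $\int_{(1-\alpha)x}^x f>\beta\int_0^x f$ holds directly. Your approach instead assumes the inequality fails for all $x$, rewrites this as the functional inequality $F((1-\alpha)x)\ge(1-\beta)F(x)$ for $F(x)=\int_0^x f$, iterates it $N=\lceil\log 4/|\log(1-\alpha)|\rceil$ times to push the argument into $(0,1/2]$, and then combines the upper bound $F((1-\alpha)^N x)<\delta/2$ (from $f\le f(1/2)<\delta$ there) with the lower bound $F(x)\ge x-1$ (from $f\ge1$ on $[1,x)$) to force a contradiction as $x\to 2^-$. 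Your method is arguably cleaner: it avoids the pigeonhole step and the somewhat delicate choice of $\delta$ satisfying $1+2n\sqrt[n]{\delta}<1/\beta$, replacing it with the explicit value $\delta=(1-\beta)^N$. The paper's argument, on the other hand, is constructive in that it actually locates the point $x$, which is not needed for the application (Lemma~\ref{lem_P4_2} only uses the contrapositive) but may be of independent interest.
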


\begin{proof}
	Choose $n \in \mathbb N$ large enough that $2/n \le \alpha$, and choose $0<\delta<1$ small enough that $1+2n\sqrt[n]{\delta} < 1/\beta$. Suppose $f : (0,2) \to (0,\infty)$ is increasing, $f(1)=1$, and $f(1/2)<\delta$. For $i = 0, 1, \dots, n+1$, let $t_i = (n+i)/(2n)$. Then $\prod_{i=1}^n [f(t_i)/f(t_{i-1})] = f(t_n)/f(t_0) > 1/\delta$, so there exists $1 \le i \le n$ such that $f(t_i)/f(t_{i-1}) > 1/\sqrt[n]{\delta}$. Let $x = t_{i+1}$ and $y = t_{i-1}$. Then $1/2 \le y < x \le 3/2$ and $x-y = 1/n \le \alpha x$, so $\int_y^x f \le \int_{(1-\alpha)x}^x f$. Note that $y<1$. We have
	\[
		\int_0^y f
		\le yf(y)
		\le \sqrt[n]{\delta} f(t_i)
		\le \frac{\sqrt[n]{\delta}}{x-t_i} \int_{t_i}^x f
		\le 2n \sqrt[n]{\delta} \int_y^x f,
	\]
	so $\int_0^x f \le (1+2n\sqrt[n]{\delta}) \int_y^x f$. It follows that
	\[
		\frac{\int_{(1-\alpha)x}^x f}{\int_0^x f}
		\ge \frac{1}{1+2n\sqrt[n]{\delta}}
		> \beta. \qedhere
	\]
\end{proof}

\begin{lemma}
	\label{lem_P4_2}
	Let $f : (0, 1) \to (0, \infty)$ be increasing. Suppose there exist $\alpha,\beta \in (0,1)$ such that, for all $0<x<1$,
	\[
		\int_{(1-\alpha)x}^x f
		\le \beta \int_0^x f.
	\]
	Then there exists $A > 0$ such that $f(x) \le A f(x/2)$ for all $0 < x < 1/2$.
\end{lemma}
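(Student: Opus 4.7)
The proof follows the same contradiction template as Lemma \ref{lem_P7_2}, with Lemma \ref{lem_P4_1} taking the role that Lemma \ref{lem_P7_1} plays there. It should actually be simpler, because Lemma \ref{lem_P4_1} imposes no auxiliary tail condition analogous to $tu(t) \le b$, so no decreasing function $u$ needs to be constructed.

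The plan is to argue by contradiction. Suppose no constant $A$ works. Then for every $n \in \mathbb{N}$ one can pick $x_n \in (0, 1/2)$ with $f(x_n)/f(x_n/2) > n$, equivalently $f(x_n/2)/f(x_n) < 1/n$. Let $\delta \in (0,1)$ be the constant furnished by Lemma \ref{lem_P4_1} for the specific $\alpha, \beta$ appearing in the hypothesis, and fix an $n$ with $1/n < \delta$. I would then rescale by defining $g : (0, 2) \to (0, \infty)$ by $g(t) = f(x_n t)/f(x_n)$. Since $x_n < 1/2$, the argument $x_n t$ remains in $(0, 1)$ for all $t \in (0, 2)$, so $g$ is well-defined. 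It is increasing because $f$ is increasing, it satisfies $g(1) = 1$, and $g(1/2) = f(x_n/2)/f(x_n) < \delta$. All the hypotheses of Lemma \ref{lem_P4_1} are thus met.

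Lemma \ref{lem_P4_1} then produces some $x \in (0, 2)$ such that $\int_{(1-\alpha)x}^{x} g > \beta \int_0^x g$. The final step is to unwind the rescaling via the substitution $u = x_n t$; both sides of this inequality pick up a common factor of $1/(x_n f(x_n))$, so setting $y = x_n x$ yields
\[
    \int_{(1-\alpha)y}^{y} f > \beta \int_0^y f.
\]
Since $x_n < 1/2$ and $x < 2$, we have $y \in (0, 1)$, so this inequality contradicts the hypothesis of the lemma, completing the argument. The only step involving any work is the rescaling bookkeeping, and even that is routine; the substantive content has been packaged into Lemma \ref{lem_P4_1}.
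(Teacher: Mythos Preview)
Your proof is correct and follows essentially the same rescaling argument as the paper: define $g(t)=f(xt)/f(x)$ and invoke Lemma~\ref{lem_P4_1}. The only difference is cosmetic---the paper argues directly (for every $0<x<1/2$ the hypothesis forces $g(1/2)\ge\delta$, hence $A=1/\delta$ works), whereas you wrap the same step in a contradiction; both are fine.
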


\begin{proof}
	Let $\delta$ be as given by Lemma \ref{lem_P4_1}. Given $0 < x < 1/2$, let $g(t) = f(xt)/f(x)$ for $0 < t < 2$. Then, for all $0<y<2$,
	\[
		\frac{\int_{(1-\alpha)y}^y g}{\int_0^y g}
		= \frac{\int_{(1-\alpha)xy}^{xy} f}{\int_0^{xy} f}
		\le \beta,
	\]
	so $g(1/2) \ge \delta$, i.e.\ $f(x) \le (1/\delta) f(x/2)$.
\end{proof}

Now we are ready to prove Theorem \ref{thm_mono_2}.

\begin{proof}[Proof of Theorem \textup{\ref{thm_mono_2}}]
	Since $w$ is monotonic and $0<w(0)<\infty$, we have $w(z) = g(|z|)$, where $g : [0,1) \to (0,\infty)$ is monotonic. Let $f(t) = g(\sqrt{1-t})$. Then $f : (0,1] \to (0,\infty)$ is monotonic and $g(r) = f(1-r^2)$, so $w(z) = f(1-|z|^2)$.
	
	Case 1: Suppose $w$ is increasing and $w\in\PP7$. By Lemma \ref{lem_equiv}, $f\in\PP7$, i.e.\ there exists $C>0$ such that $\int_0^x M(f1_{(0,x]}) \le C \int_0^x f$ for all $x \in (0,1]$. Since $f$ is decreasing, so is the function $y \mapsto \frac{1}{y} \int_0^y f$. By Lemma \ref{lem_dict} part (c), for all $x \in (0,1]$,
	\[
		\int_0^x \left(\frac{1}{t} \int_0^t f\right)\,dt \le C \int_0^x f.
	\]
	For $0<t<1$, let $h(t) = f(t/2)$. Then, for all $0<x<1$,
	\[
		\int_0^x \left( \frac{1}{y} \int_0^y h \right)\,dy
		\le C \int_0^x h.
	\]
	The function $h$ is bounded below by a positive number, so Lemma \ref{lem_P7_2} gives $A>0$ such that $h(t/2) \le A h(t)$ for all $0<t<1$. Then $f(t/2) \le A f(t)$ for all $0<t<1/2$. For all $0<r<1/4$, we have $0 < r(2-r) < 1/2$ and hence
	\[
		g\left(1-\frac{r}{2}\right)
		= f\left(r\left(1-\frac{r}{4}\right)\right)
		\le f\left(r\left(1-\frac{r}{2}\right)\right)
		\le A f(r(2-r))
		= A g(1-r).
	\]
	Thus, $w$ is almost constant on top halves $T_I$ with $|I| < 1/4$. By Lemma \ref{lem_AC}, $w$ is almost constant on all top halves.
	
	Case 2: Suppose $w$ is decreasing and $w \in \PP 4$. By Lemma \ref{lem_equiv}, $f \in \PP 4$, i.e.\ there exist $\alpha,\beta \in (0,1)$ such that, for any $x \in (0,1]$ and any measurable set $E \subset (0,x]$, if $|E| \le \alpha x$, then $\int_E f \le \beta \int_0^x f$. In particular, for all $x \in (0,1]$,
	\[
		\int_{(1-\alpha)x}^x f \le \beta \int_0^x f.
	\]
	Since $f$ is increasing, Lemma \ref{lem_P4_2} gives $A>0$ such that $f(x) \le A f(x/2)$ for all $0<x<1/2$. For all $0<r<1/4$, we have $0<r(2-r)<1/2$ and hence
	\[
		g(1-r)
		= f(r(2-r))
		\le A f\left(r\left(1-\frac{r}{2}\right)\right)
		\le A f\left(r\left(1-\frac{r}{4}\right)\right)
		= A g\left(1-\frac{r}{2}\right).
	\]
	Thus, $w$ is almost constant on top halves $T_I$ with $|I| < 1/4$. By Lemma \ref{lem_AC}, $w$ is almost constant on all top halves.
\end{proof}

We conclude this section with a few corollaries of Theorems \ref{thm_mono_1} and \ref{thm_mono_2}.

\begin{corollary}
	\label{cor_1}
	Let $w$ be a weight on $\D$ such that $0<w(0)<\infty$.
	\begin{enumerate}[label=\upshape(\alph*)]
		\item If $w$ is increasing, then \textup{\ref{P1}} - \textup{\ref{P8}} are equivalent.
		\item If $w$ is decreasing, then \textup{\ref{P1}} - \textup{\ref{P6}}, \textup{\ref{P8}}, and the condition $w \in AC$ are equivalent.
	\end{enumerate}
\end{corollary}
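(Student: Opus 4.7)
The plan is to combine four previously established results: Theorem \ref{thm_equiv_2}, Theorem \ref{thm_mono_2}, Proposition \ref{prop_arb_1}, and Proposition \ref{prop_dec}. No new argument is needed---the corollary is a direct assembly of existing pieces.

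For part (a), I would assume $w$ is increasing with $0<w(0)<\infty$ and run a three-step chain. First, Proposition \ref{prop_arb_1} shows that each of \ref{P1}--\ref{P6} and \ref{P8} implies \ref{P4}, and \ref{P4} implies \ref{P7}; hence any one of \ref{P1}--\ref{P8} forces \ref{P7}. Second, the increasing case of Theorem \ref{thm_mono_2} upgrades \ref{P7} to $w \in AC$. Third, since $w$ is now known to be in $AC$ and to satisfy \ref{P7}, Theorem \ref{thm_equiv_2} gives all of \ref{P1}--\ref{P8}. Thus any one of the eight conditions implies all the others.

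For part (b), I would assume $w$ is decreasing with $0<w(0)<\infty$. Here Proposition \ref{prop_dec} already gives \ref{P7} for free, which is precisely why \ref{P7} has to be dropped from the list of equivalences---it holds trivially and so cannot distinguish any sub-collection of such weights. If any of \ref{P1}--\ref{P6} or \ref{P8} holds, Proposition \ref{prop_arb_1} yields \ref{P4}, and then the decreasing case of Theorem \ref{thm_mono_2} gives $w \in AC$. Conversely, if $w \in AC$, then combining this with the automatic \ref{P7} and applying Theorem \ref{thm_equiv_2} yields all of \ref{P1}--\ref{P8}, and in particular \ref{P1}--\ref{P6} and \ref{P8}. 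There is no genuine obstacle anywhere in this proof; the only conceptual point worth flagging is the reason \ref{P7} must be omitted in part (b), namely that it is automatic rather than equivalent-but-false.
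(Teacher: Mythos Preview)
Your proposal is correct and follows essentially the same route as the paper's own proof: reduce to \ref{P7} (in part (a)) or \ref{P4} (in part (b)) via Proposition \ref{prop_arb_1}, invoke the appropriate case of Theorem \ref{thm_mono_2} to obtain $w\in AC$, and then apply Theorem \ref{thm_equiv_2}; in part (b) the converse uses Proposition \ref{prop_dec} exactly as you describe. Your additional remark explaining why \ref{P7} must be excluded in part (b) is a nice clarification but not strictly part of the proof.
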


\begin{proof}
	(a) If $w$ satisfies one of \ref{P1} - \ref{P8}, then $w\in\PP7$ by Proposition \ref{prop_arb_1}, so $w \in AC$ by Theorem \ref{thm_mono_2}, and it follows by Theorem \ref{thm_equiv_2} that $w$ satisfies all of \ref{P1} - \ref{P8}.
	
	(b) If $w$ satisfies one of \ref{P1} - \ref{P6} or \ref{P8}, then $w\in\PP4$ by Proposition \ref{prop_arb_1}, so $w \in AC$ by Theorem \ref{thm_mono_2}. Conversely, if $w \in AC$, then, since $w\in\PP7$ by Proposition \ref{prop_dec}, it follows by Theorem \ref{thm_equiv_2} that $w$ satisfies all of \ref{P1} - \ref{P8}.
\end{proof}

\begin{remark}
	\label{rmk_cor}
	Example \ref{ex_10} shows that the condition $w \in AC$ cannot be included among the equivalent conditions in Corollary \ref{cor_1}(a). Example \ref{ex_8} shows that \ref{P7} cannot be included among the equivalent conditions in Corollary \ref{cor_1}(b).
\end{remark}

\begin{corollary}
	\label{cor_2}
	Let $w$ be a weight on $\D$ such that $0<w(0)<\infty$. Consider the following conditions:
	\begin{enumerate}[label=\upshape(\alph*)]
		\item $w$ is almost constant on top halves.
		\item $w$ has the reverse H\"older property.
		\item $w$ has the reverse Jensen property.
		\item $w \in B_p$ for some $1<p<\infty$.
		\item $w \in B_\infty$.
	\end{enumerate}
	If $w$ is increasing, then conditions \textup{(b)} - \textup{(e)} are equivalent. If $w$ is decreasing, then conditions \textup{(a)} - \textup{(d)} are equivalent.
\end{corollary}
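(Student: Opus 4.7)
The plan is to deduce this corollary immediately from Corollary \ref{cor_1} by identifying each of conditions (a)--(e) with one of the properties \ref{P1}--\ref{P8} or the condition $w \in AC$ introduced in Definitions \ref{def_AC} and \ref{def_P1-P8}. Explicitly, (a) is exactly $w \in AC$; (b) is \ref{P3}; (c) is \ref{P2}; (d) is \ref{P1} (which, by the convention noted after Definition \ref{def_P1-P8} and in Figure \ref{fig_map_2}, coincides with $\bigcup_{1<p<\infty} B_p$); and (e) is \ref{P7}, which on $\D$ is the definition of $B_\infty$.

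For the increasing case, Corollary \ref{cor_1}(a) asserts that, under the assumption $0 < w(0) < \infty$, all of \ref{P1}--\ref{P8} are equivalent. Restricting to the four properties \ref{P1}, \ref{P2}, \ref{P3}, \ref{P7} yields the equivalence of (b), (c), (d), (e). For the decreasing case, Corollary \ref{cor_1}(b) asserts that, under the same assumption, \ref{P1}--\ref{P6}, \ref{P8}, and $w \in AC$ are all equivalent. Restricting to $w \in AC$, \ref{P1}, \ref{P2}, \ref{P3} yields the equivalence of (a), (b), (c), (d).

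There is no real obstacle, since the substantive work has already been done in establishing Theorems \ref{thm_equiv_2} and \ref{thm_mono_2}, Proposition \ref{prop_dec}, and hence Corollary \ref{cor_1}. One may remark that condition (a) is not included in the equivalence list for increasing weights (cf.\ Example \ref{ex_10}), and condition (e) is not included for decreasing weights because, by Proposition \ref{prop_dec}, every decreasing weight with $0<w(0)<\infty$ automatically belongs to $B_\infty$, so (e) is strictly weaker than (a)--(d) in that setting.
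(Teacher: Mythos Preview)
Your proof is correct and follows exactly the paper's primary approach: the paper also states that this ``follows immediately from Corollary \ref{cor_1}'' (and notes as an alternative that it can be deduced from Theorems \ref{thm_equiv_1} and \ref{thm_mono_1} together with Proposition \ref{prop_dec}). Your identification of conditions (a)--(e) with $AC$, \ref{P3}, \ref{P2}, \ref{P1}, \ref{P7} and the subsequent restriction of Corollary \ref{cor_1} is precisely what is needed.
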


\begin{proof}
	This follows immediately from Corollary \ref{cor_1}. Alternatively, it can easily be deduced from Theorems \ref{thm_equiv_1} and \ref{thm_mono_1} and Proposition \ref{prop_dec}.
\end{proof}

\begin{corollary}
	\label{cor_3}
	Let $w$ be a monotonic weight on $\D$ such that $0<w(0)<\infty$.
	\begin{enumerate}[label=\upshape(\alph*)]
		\item If $w \in B_p$ for some $1<p<\infty$, then $w \in B_q$ for some $1<q<p$.
		\item If $w \in B_p$ for some $1 \le p < \infty$, then $w^{1+\varepsilon} \in B_p$ for some $\varepsilon>0$.
	\end{enumerate}
\end{corollary}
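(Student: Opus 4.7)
The plan is to reduce Corollary \ref{cor_3} to a direct application of Theorem \ref{thm_self}, using Theorem \ref{thm_mono_1} to supply the side hypothesis that $w$ is almost constant on top halves. Recall that, by our convention, calling $w$ monotonic means $w$ is radial with a monotonic radial profile, so the hypotheses of Theorem \ref{thm_mono_1} are met as soon as $w$ belongs to one of the classes in conditions (a)--(d) of Theorem \ref{thm_equiv_1} (and when $w$ is decreasing, condition (d) is automatic by Proposition \ref{prop_dec} and need not be verified separately).

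For part (a), the assumption $w \in B_p$ for some $1<p<\infty$ is exactly condition (a) of Theorem \ref{thm_equiv_1}. Thus Theorem \ref{thm_mono_1} applies regardless of whether $w$ is increasing or decreasing and produces $w \in AC$. Applying Theorem \ref{thm_self}(a) then yields $1<q<p$ with $w \in B_q$, which is what we wanted.

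For part (b) with $1<p<\infty$, the same chain works verbatim: $w \in B_p$ gives $w \in AC$ by Theorem \ref{thm_mono_1}, and Theorem \ref{thm_self}(b) then supplies the $\varepsilon>0$ for which $w^{1+\varepsilon}\in B_p$. The one subtlety is the boundary case $p=1$, since Theorem \ref{thm_mono_1} is phrased in terms of conditions (a)--(d) of Theorem \ref{thm_equiv_1}, which do not explicitly include $B_1$. Here I would invoke Proposition \ref{prop_arb_2}(a) to obtain $w \in B_1 \subset B_2$, deduce $w \in AC$ from Theorem \ref{thm_mono_1} applied to $w \in B_2$, and then appeal to the $p=1$ branch of Theorem \ref{thm_self}(b) to conclude $w^{1+\varepsilon}\in B_1$.

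There is no genuine obstacle: all of the real work is already contained in Theorems \ref{thm_mono_1} and \ref{thm_self}, and the content of Corollary \ref{cor_3} is simply that, under monotonicity and $0<w(0)<\infty$, membership in any $B_p$ class is strong enough to promote the weight into $AC$, after which the standard self-improvement theorem for $AC$ weights takes over.
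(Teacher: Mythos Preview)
Your proposal is correct and follows exactly the paper's approach: the paper's proof is the single sentence ``This follows immediately from Theorems \ref{thm_mono_1} and \ref{thm_self}.'' Your only addition is the explicit treatment of the $p=1$ case via the inclusion $B_1 \subset B_2$ from Proposition \ref{prop_arb_2}(a), which the paper leaves implicit.
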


\begin{proof}
	This follows immediately from Theorems \ref{thm_mono_1} and \ref{thm_self}.
\end{proof}

\section{Counterexamples}
\label{sec_counter}

We begin by constructing a family of radial weights depending on a choice of sequences $(a_k)$ and $(b_k)$ of real numbers.

\begin{example}
	\label{ex_main}
	Let $(a_k)_{k=0}^\infty \subset [0,1]$ and $(b_k)_{k=0}^\infty \subset (0,\infty)$. Define $f:(0,1]\to(0,\infty)$ by
	\[
		f(t) =
		\begin{cases}
			1 & \text{if } 2^{-k-1}(1+a_k)<t\le2^{-k} \\
			b_k & \text{if } 2^{-k-1}<t\le2^{-k-1}(1+a_k)
		\end{cases}
	\]
	for all $k \ge 0$. Define $w:\D\to(0,\infty)$ by $w(z) = f(1-|z|^2)$. Note that, if $0 < a_k < 1$ for all $k \ge 0$ and either $b_k \to \infty$ or $b_k \to 0$, then $w \notin AC$.
	
	The following estimates will be useful: Suppose $\varphi : (0,\infty)\to\R$ is a measurable function such that $\int_0^1 (\varphi \circ f)$ is defined. Then, for any $n \ge 0$, we have
	\[
		\int_0^{2^{-n}} (\varphi \circ f)
		= \sum_{k=n}^\infty \int_{2^{-k-1}}^{2^{-k}} (\varphi \circ f)
		= \sum_{k=n}^\infty 2^{-k-1}(a_k \varphi(b_k) + (1-a_k) \varphi(1)).
	\]
	For any $0 < x \le 1$, we have $2^{-n-1} < x \le 2^{-n}$ for some $n \ge 0$. If $\varphi(1) \ge 0$ and $\varphi(b_k) \ge 0$ for all $k\ge0$, then
	\begin{align}
		\frac{1}{x} \int_0^x (\varphi \circ f)
		&\le 2^{n+1} \int_0^{2^{-n}} (\varphi \circ f)
		\le 2\varphi(1) + \sum_{k=n}^\infty 2^{n-k} a_k \varphi(b_k), \label{eq_1} \\
		\frac{1}{x} \int_0^x (\varphi \circ f)
		&\ge 2^n \int_0^{2^{-n-1}} (\varphi \circ f)
		\ge \frac{1-A}{2}\,\varphi(1) + \frac{1}{4} \sum_{k=n+1}^{\infty} 2^{n+1-k} a_k \varphi(b_k), \label{eq_2}
	\end{align}
	where $A = \sup_{k\ge1} a_k$. In particular, $\int_0^1 f \le 2 + \sum_{k=0}^\infty 2^{-k} a_k b_k$, so, if there is a polynomial $p(x)$ such that $a_k b_k \le p(k)$ for all $k \ge 0$, then $f$ is integrable and, by Lemma \ref{lem_dict} part (a), $w$ is integrable.
\end{example}

\begin{remark}
	\label{rmk_smooth}
	Example \ref{ex_main} can easily be modified to obtain a smooth weight $w$ satisfying similar estimates. For example, one can redefine $f$ on each of the intervals ${[2^{-k-1},2^{-k-1}(1+a_k)]}$ so that it is smooth on this interval, equal to $1$ on the end sixths of the interval, and equal to $b_k$ on the middle third of the interval.
\end{remark}

Now, we show how the sequences $(a_k)$ and $(b_k)$ can be chosen to obtain the desired counterexamples. In Examples \ref{ex_1} - \ref{ex_7}, we have $a_k b_k \le k+1$ for all $k\ge0$, so $w$ is integrable.

\begin{example}[$w\in\PP{1}\setminus\PP{6}$ and $w \in B_1 \setminus RH$]
	\label{ex_1}
	In Example \ref{ex_main}, let $b_0 \ge 1$, $b_k \nearrow \infty$, and $a_k = 1/b_k$, so that $a_0 \le 1$ and $a_k \searrow 0$.
	
	Proof of $w\in\PP{1}$ and $w \in B_1$: Given $x \in (0,1]$, we have $2^{-n-1} < x \le 2^{-n}$ for some $n\ge0$. Since $a_k b_k \le 1$ for all $k\ge0$, \eqref{eq_1} gives
	\[
		\frac{1}{x} \int_0^x f
		\le 2 + \sum_{k=n}^\infty 2^{n-k} a_k b_k
		\le 4.
	\]
	Since $f \ge 1$, we have $\essinf_{(0,x]} f \ge 1$. Thus, $f \in A_1$. By Lemma \ref{lem_equiv}, $w \in B_1$. By Proposition \ref{prop_arb_2}, $w\in\PP{1}$.
	
	Proof of $w\notin\PP{6}$ and $w \notin RH$: Given $n \ge 0$, let $x = 2^{-n}$. Since $\frac{1}{x} \int_0^x f \le 4$, \eqref{eq_2} gives
	\begin{align*}
		\frac{1}{x} \int_0^x f \log^+ \biggl(\frac{f}{\frac{1}{x} \int_0^x f}\biggr)
		&\ge \frac{1}{x} \int_0^x f \log^+ \biggl(\frac{f}{4}\biggr)
		\ge \frac{a_{n+1} b_{n+1} \log^+(b_{n+1}/4)}{4} \\
		&= \frac{\log^+(b_{n+1}/4)}{4}
		\xrightarrow[n\to\infty]{} \infty.
	\end{align*}
	Thus, $f \notin \PP 6$. By Lemma \ref{lem_equiv}, $w \notin \PP 6$. By Proposition \ref{prop_arb_1}, $w \notin RH$.
\end{example}

\begin{example}[$w\in\PP{8}\setminus\PP{5}$ and $w \in RH \setminus RJ$]
	\label{ex_2}
	In Example \ref{ex_main}, let $b_0 = 1$, $b_k \nearrow \infty$, and $b_k/b_{k-1} \le (k+1)/k$ (e.g.\ $b_k = k+1$), and let $a_k = 1/4$. By induction, $b_k \le k+1$.
	
	Proof of $w\in\PP{8}$ and $w \in RH$: Let $x \in (0,1]$ and $\lambda > \frac{1}{x} \int_0^x f$ be given. Then $2^{-n-1} < x \le 2^{-n}$ for some $n \ge 0$. Since $f \ge 1$, we have $\frac{1}{x} \int_0^x f \ge 1$. This implies that $\lambda > 1$, so $b_m \le \lambda < b_{m+1}$ for some $m \ge 0$. Let $N = \max(m,n)$. By \eqref{eq_2},
	\[
		b_{m+1}
		> \frac{1}{x} \int_0^x f
		\ge \frac{a_{n+1} b_{n+1}}{4}
		= \frac{b_{n+1}}{16}.
	\]
	Hence $b_{N+1} \le 16 b_{m+1}$. Let $C$ be as given by Lemma \ref{lem_series}. For any $k \ge N$, we have
	\[
		\frac{b_k}{b_N}
		= \prod_{j=N+1}^k \frac{b_j}{b_{j-1}}
		\le \prod_{j=N+1}^k \frac{j+1}{j}
		= \frac{k+1}{N+1}.
	\]
	For any $M\ge0$, let $E_M = \bigsqcup_{k=M}^\infty (2^{-k-1},2^{-k-1}(1+a_k)]$. Note that
	\begin{align*}
		(0,x] \cap \{f>\lambda\}
		&\subset (0,2^{-n}] \cap \{f>b_m\}
		\subset (0,2^{-N}] \cap \{f>1\}
		\subset E_N, \\
		(0,x] \cap \{f>\lambda\}
		&\supset (0,2^{-n-1}] \cap \{f \ge b_{m+1}\}
		\supset (0,2^{-N-1}] \cap \{f \ge b_{N+1}\}
		\supset E_{N+1}.
	\end{align*}
	Using all this information, we estimate
	\begin{align*}
		f((0,x] \cap \{f>\lambda\})
		&\le \sum_{k=N}^\infty 2^{-k-1} a_k b_k
		\le \frac{b_N}{8(N+1)} \sum_{k=N}^\infty 2^{-k}(k+1)
		\le \frac{C}{8} \cdot 2^{-N} b_N, \\
		|(0,x] \cap \{f>\lambda\}|
		&\ge \sum_{k=N+1}^\infty 2^{-k-1} a_k
		= \frac{1}{8} \cdot 2^{-N}.
	\end{align*}
	It follows that
	\[
		\frac{f((0,x] \cap \{f>\lambda\})}{\lambda|(0,x] \cap \{f>\lambda\}|}
		\le \frac{Cb_N}{\lambda}
		\le \frac{16Cb_{m+1}}{b_m}
		\le \frac{16C(m+2)}{m+1}
		\le 32C.
	\]
	Thus, $f\in\PP{8}$. By Lemma \ref{lem_equiv}, $w\in\PP{8}$. By Proposition \ref{prop_arb_1}, $w \in RH$.
	
	Proof of $w\notin\PP{5}$ and $w \notin RJ$: Given $n \ge 0$, let $x = 2^{-n}$. Since $f=1$ on at least $3/4$ of $(2^{-k-1},2^{-k}]$ for all $k \ge 0$, we have $f=1$ on at least $3/4$ of $(0,2^{-n}]$, so $m(f;(0,x]) = 1$. However, $\frac{1}{x} \int_0^x f \ge b_{n+1}/16 \to \infty$ as $n\to\infty$. Thus, $f\notin\PP{5}$. By Lemma \ref{lem_equiv}, $w\notin\PP{5}$. By Proposition \ref{prop_arb_1}, $w \notin RJ$.
\end{example}

\begin{example}[$w\in\PP{8}\cap\PP{5}\setminus\PP{2}$ and $w \in RH \setminus RJ$]
	\label{ex_3}
	In Example \ref{ex_main}, let $b_0 = 1$, $b_k \searrow 0$, and $a_k = 1/\sqrt{16-\log b_k}$, so that $a_0 = 1/4$, $a_k \searrow 0$, and $b_k = \exp(16-(1/a_k^2))$.
	
	Proof of $w\in\PP{8}$ and $w \in RH$: Let $x\in(0,1]$ and $\lambda > \frac{1}{x} \int_0^x f$ be given. Then $2^{-n-1} < x \le 2^{-n}$ for some $n \ge 0$. If $\lambda \ge 1$, then $\{f>\lambda\} = \varnothing$ (since $f \le 1$). Suppose $\lambda<1$. Since $f \le 1$, we have $f((0,x] \cap \{f>\lambda\}) \le f((0,2^{-n}]) \le 2^{-n}$. Since $\sup_{k \ge 1} a_k \le 1/2$, we have $\frac{1}{x} \int_0^x f \ge 1/4$ by \eqref{eq_2}, so $\lambda>1/4$. Since
	\[
		(0,x] \cap \{f>\lambda\}
		\supset (0,2^{-n-1}] \cap \{f=1\}
		\supset \bigsqcup_{k=n+1}^\infty (2^{-k-1}(1+a_k),2^{-k}],
	\]
	we have $|(0,x] \cap \{f>\lambda\}| \ge \sum_{k=n+1}^\infty 2^{-k-1} (1-a_k) \ge (1/4) \cdot 2^{-n}$. Thus, $f\in\PP{8}$ with $\beta=1$ and $C=16$. By Lemma \ref{lem_equiv}, $w\in\PP{8}$. By Proposition \ref{prop_arb_1}, $w \in RH$.
	
	Proof of $w\in\PP{5}$: For each $k\ge0$, we have $a_k \le 1/4$, so $f=1$ on at least $3/4$ of $(2^{-k-1},2^{-k}]$. Hence, for each $n\ge0$, we have $f=1$ on at least $3/4$ of $(0,2^{-n}]$. Given $x \in (0,1]$, we have $2^{-n-1} < x \le 2^{-n}$ for some $n \ge 0$. We claim that $f=1$ on at least $3/5$ of $(0,x]$. Indeed, in the worst case, we have $x = 2^{-n-1}(1+a_n)$, and the proportion of the interval $(0,x]$ on which $f=1$ is at least
	\[
		\frac{(3/4)\cdot2^{-n-1}}{2^{-n-1}(1+a_n)}
		\ge \frac{(3/4)}{1+(1/4)}
		= \frac{3}{5}.
	\]
	It follows that $m(f;(0,x]) = 1$. Since $f \le 1$, we have $\frac{1}{x} \int_0^x f \le 1$. Thus, $f\in\PP{5}$. By Lemma \ref{lem_equiv}, $w\in\PP{5}$.
	
	Proof of $w\notin\PP{2}$, i.e.\ $w \notin RJ$: Given $n \ge 0$, let $x=2^{-n}$. Since $f \le 1$, we have $\frac{1}{x} \int_0^x \log(f^{-1}) \ge a_{n+1} \log(b_{n+1}^{-1}) / 4$ by \eqref{eq_2}. Since $\frac{1}{x} \int_0^x f \ge 1/4$, it follows that
	\[
		\left(\frac{1}{x} \int_0^x f\right) \exp\left(\frac{1}{x} \int_0^x \log(f^{-1})\right)
		\ge \frac{1}{4} \exp\left(\frac{1}{4a_{n+1}}-4a_{n+1}\right)
		\xrightarrow[n\to\infty]{} \infty.
	\]
	Thus, $f\notin\PP{2}$. By Lemma \ref{lem_equiv}, $w\notin\PP{2}$.
\end{example}

\begin{example}[$w\in\PP{8}\cap\PP{2}\setminus\PP{1}$ and $w \in RH \cap RJ \setminus (\bigcup_{1<p<\infty} B_p)$]
	\label{ex_4}
	In Example \ref{ex_main}, let $b_0 = 1$, $b_k \searrow 0$, and $a_k = 1/(4-\log{b_k})$, so that $a_0 = 1/4$, $a_k \searrow 0$, and $b_k = \exp(4-(1/a_k))$.
	
	Proof of $w\in\PP{8}$ and $w \in RH$: This is the same as in Example \ref{ex_3}.
	
	Proof of $w\in\PP{2}$, i.e.\ $w \in RJ$: Given $x \in (0,1]$, we have $2^{-n-1} < x \le 2^{-n}$ for some $n\ge0$. Since $f \le 1$, we have $\frac{1}{x} \int_0^x f \le 1$. Since $a_k \log(b_k^{-1}) \le 1$ for all $k\ge0$, \eqref{eq_1} gives
	\[
		\frac{1}{x} \int_0^x \log(f^{-1})
		\le \sum_{k=n}^\infty 2^{n-k} a_k \log(b_k^{-1})
		\le 2.
	\]
	Thus, $f\in\PP{2}$. By Lemma \ref{lem_equiv}, $w\in\PP{2}$.
	
	Proof of $w\notin\PP{1}$, i.e.\ $w \notin \bigcup_{1<p<\infty} B_p$: Given $1<p<\infty$ and $n\ge0$, let $x=2^{-n}$. We have $\frac{1}{x} \int_0^x f \ge 1/4$ and $\frac{1}{x} \int_0^x f^{1-p'} \ge a_{n+1} b_{n+1}^{1-p'} / 4$ by \eqref{eq_2}, so
	\[
		\left(\frac{1}{x} \int_0^x f\right) \left(\frac{1}{x} \int_0^x f^{1-p'}\right)^{p-1}
		\ge \frac{\exp((1/a_{n+1})-4)}{4^p (1/a_{n+1})^{p-1}}
		\xrightarrow[n\to\infty]{} \infty.
	\]
	Thus, $f\notin\PP{1}$. By Lemma \ref{lem_equiv}, $w\notin\PP{1}$.
\end{example}

\begin{example}[$w\in\PP{1}\cap\PP{6}\setminus\PP{3}$ and $w \in B_1 \setminus RH$]
	\label{ex_5}
	In Example \ref{ex_main}, let $b_0 \ge 1$, $b_k \nearrow \infty$, and $a_k = 1/(b_k(1+\log b_k))$, so that $a_0 \le 1$ and $a_k \searrow 0$.
	
	Proof of $w\in\PP{1}$ and $w \in B_1$: This is the same as in Example \ref{ex_1}.
	
	Proof of $w\in\PP{6}$: Given $x \in (0,1]$, we have $2^{-n-1} < x \le 2^{-n}$ for some $n \ge 0$. Since $f \ge 1$, we have $\frac{1}{x} \int_0^x f \ge 1$. By \eqref{eq_1},
	\begin{align*}
		\frac{1}{x} \int_0^x f \log^+\biggl(\frac{f}{\frac{1}{x} \int_0^x f}\biggr)
		&\le \frac{1}{x} \int_0^x f \log f
		\le \sum_{k=n}^\infty 2^{n-k} a_k b_k \log b_k
		\le \sum_{k=n}^\infty 2^{n-k}
		= 2.
	\end{align*}
	Thus, $f\in\PP{6}$. By Lemma \ref{lem_equiv}, $w\in\PP{6}$.
	
	Proof of $w\notin\PP{3}$, i.e.\ $w \notin RH$: Given $1<q<\infty$ and $n\ge0$, let $x=2^{-n}$. By \eqref{eq_2}, $\frac{1}{x} \int_0^x f^q \ge a_{n+1} b_{n+1}^q / 4$. Since $\frac{1}{x} \int_0^x f \le 4$, it follows that
	\[
		\frac{(\frac{1}{x} \int_0^x f^q)^{1/q}}{\frac{1}{x} \int_0^x f}
		\ge \frac{b_{n+1}^{1-(1/q)}}{4^{1+(1/q)}(1+\log b_{n+1})^{1/q}}
		\xrightarrow[n\to\infty]{} \infty.
	\]
	Thus, $f\notin\PP{3}$. By Lemma \ref{lem_equiv}, $w\notin\PP{3}$.
\end{example}

\begin{example}[$w\in\PP{1}\cap\PP{3}\setminus\PP{8}$ and $w \in B_1 \cap RH \setminus AC$]
	\label{ex_6}
	In Example \ref{ex_main}, let $b_0 \ge 1$, $b_k \nearrow \infty$, and $a_k = 2^{-b_k}$, so that $a_0 \le 1/2$ and $a_k \searrow 0$.
	
	Proof of $w\in\PP{1}$ and $w \in B_1$: This is the same as in Example \ref{ex_1}.
	
	Proof of $w\in\PP{3}$, i.e.\ $w \in RH$: For any $1<q<\infty$, we have $a_k b_k^q \to 0$ as $k\to\infty$, so there exists $C_q>0$ such that $a_k b_k^q \le C_q$ for all $k \ge 0$. Given $x \in (0,1]$, we have $2^{-n-1} < x \le 2^{-n}$ for some $n \ge 0$. By \eqref{eq_1},
	\[
		\frac{1}{x} \int_0^x f^q
		\le 2 + \sum_{k=n}^\infty 2^{n-k} a_k b_k^q
		\le 2(C_q+1).
	\]
	Since $f \ge 1$, we have $\frac{1}{x} \int_0^x f \ge 1$. Thus, $f\in\PP{3}$. By Lemma \ref{lem_equiv}, $w\in\PP{3}$.
	
	Proof of $w\notin\PP{8}$: Given $\beta,C>0$, choose $n$ large enough that $\sqrt{b_n} > \max(4,1/\beta,2C)$ and let $x=2^{-n}$ and $\lambda=\sqrt{b_n}$. Then $\frac{1}{x} \int_0^x f \le 4 < \lambda$ and $\beta \lambda > 1$. Since $\lambda>1$, we have $b_n>\lambda$. Let $E_n = \bigsqcup_{k=n}^\infty (2^{-k-1},2^{-k-1}(1+a_k)]$. Then
	\begin{align*}
		(0,x] \cap \{f>\lambda\}
		&\supset (0,2^{-n}] \cap \{f \ge b_n\}
		\supset E_n, \\
		(0,x] \cap \{f>\beta\lambda\}
		&\subset (0,2^{-n}] \cap \{f>1\}
		\subset E_n.
	\end{align*}
	Hence
	\begin{align*}
		f((0,x] \cap \{f>\lambda\})
		&\ge \sum_{k=n}^\infty 2^{-k-1} a_k b_k
		\ge 2^{-n-1} a_n b_n, \\
		|(0,x] \cap \{f>\beta\lambda\}|
		&\le \sum_{k=n}^\infty 2^{-k-1} a_k
		\le a_n \sum_{k=n}^\infty 2^{-k-1}
		= 2^{-n} a_n.
	\end{align*}
	It follows that
	\[
		\frac{f((0,x] \cap \{f>\lambda\})}{\lambda |(0,x] \cap \{f>\beta\lambda\}|}
		\ge \frac{\sqrt{b_n}}{2}
		> C.
	\]
	Thus, $f\notin\PP{8}$. By Lemma \ref{lem_equiv}, $w\notin\PP{8}$.
\end{example}

\begin{example}[$w\in\PP{1}\cap\PP{8}\setminus AC$ and $w \in RH \cap (\bigcap_{1<p<\infty}{B_p}) \setminus B_1$]
	\label{ex_7}
	In Example \ref{ex_main}, let $b_0 \le 1$, $b_k \searrow 0$, and $a_k = 2^{-1/b_k}$, so that $a_0 \le 1/2$ and $a_k \searrow 0$.
	
	Proof of $w\in\PP{8}$ and $w \in RH$: This is the same as in Example \ref{ex_3}.
	
	Proof of $w\in\PP{1}$ and $w \in \bigcap_{1<p<\infty} B_p$: Since $f \le 1$, we have $\frac{1}{x} \int_0^x f \le 1$ for all $x \in (0,1]$. For any $q>0$, we have $a_k b_k^{-q} \to 0$ as $k\to\infty$, so there exists $C_q>0$ such that $a_k b_k^{-q} \le C_q$ for all $k \ge 0$. By \eqref{eq_1}, $\frac{1}{x} \int_0^x f^{-q} \le 2(C_q+1)$ for all $x\in(0,1]$. Thus, $f \in A_p$ for all $1<p<\infty$. By Lemma \ref{lem_equiv}, $w \in B_p$ for all $1<p<\infty$. In particular, $w\in\PP{1}$.
	
	Proof of $w \notin B_1$: Since $\essinf_{(0,1]} f = 0$, we have $f \notin A_1$. By Lemma \ref{lem_equiv}, $w \notin B_1$.
\end{example}

\begin{remark}
	\label{rmk_AC}
	Example \ref{ex_7} shows that, even if a weight $w$ on $\D$ satisfies all of the properties \ref{P1} - \ref{P8}, it still need not be almost constant on top halves.
\end{remark}

In the remainder of this section, we present counterexamples which are not based on the construction in Example \ref{ex_main}.

\begin{example}[$w\in\PP{7}\setminus\PP{4}$]
	\label{ex_8}
	Let $(b_k)_{k=0}^\infty$ be a sequence of real numbers such that $b_0=1$ and $b_k\nearrow\infty$. Let $w(z)=f(1-|z|)$, where $f(t)=1/(\prod_{k=1}^n b_k)$ for $2^{-n-1} < t \le 2^{-n}$ and $n\ge0$. Then $w$ is decreasing, $w(0)=1$, and $w \notin AC$, so $w \in \PP{7}$ by Proposition \ref{prop_dec}, but $w \notin \PP{4}$ by Theorem \ref{thm_mono_2}.
\end{example}

The following remark contains one of the key results of this paper.

\begin{remark}
	\label{rmk_osc}
	In Examples \ref{ex_1} - \ref{ex_7} and \ref{ex_8}, the sequence $(b_k)$ may be chosen to grow or decay arbitrarily slowly. Thus, for a radial weight $w$ on $\D$ which is essentially bounded above and below by finite positive numbers on each disc $\{|z| \le R\}$ with $0<R<1$, if
	\[
		\sup_{I\subset\T\,:\,|I|>\epsilon}
		\frac{\esssup_{T_I}w}{\essinf_{T_I}w}
		\xrightarrow[\epsilon\to0]{} \infty,
	\]
	no matter how slowly, then each of the equivalences in Theorem \ref{thm_equiv_2} can fail. Therefore, the assumption in Theorem \ref{thm_equiv_2} regarding the oscillation of $w$ on top halves is the best possible.
\end{remark}

\begin{example}[$w \in AC \cap B_\infty \setminus B_1$]
	\label{ex_9}
	Let $w(z)=f(1-|z|)$, where $f(t)=2^{-n}$ for $2^{-n-1} < t \le 2^{-n}$ and $n\ge0$. Then $w$ is decreasing, $w(0)=1$, and $w \in AC$, but $w \notin B_1$ since $\essinf_\D w = 0$. By Proposition \ref{prop_dec}, $w \in B_\infty$.
\end{example}

\begin{example}[$w \in AC \setminus B_\infty$]
	\label{ex_10}
	Let $w(z)=f(1-|z|)$, where $f(t)=2^n$ for $2^{-n-1} < t \le 2^{-n}$ and $n\ge0$. Then $w$ is increasing, $w(0)=1$, and $w \in AC$, but $w \notin B_\infty$ since $w$ is not integrable:
	\[
		\int_\D w
		= \sum_{n=0}^\infty 2^n ((1-2^{-n-1})^2 - (1-2^{-n})^2)
		\ge \sum_{n=0}^\infty (1-2^{-n})
		= \infty. \qedhere
	\]
\end{example}

The last example in this section shows that the assumption $0<w(0)<\infty$ cannot be removed from Theorem \ref{thm_mono_1}.

\begin{example}
	\label{ex_centre}
	Let $w(z)=|z|^{2r}$, where $r>-1$ and $r\ne 0$. If $r>0$ (resp.\ $r<0$), then $w$ is increasing (resp.\ decreasing) and $w(0)=0$ (resp.\ $w(0)=\infty$). We show that $w \in \bigcup_{1<p<\infty} B_p$ and $w \in RH$ but $w \notin AC$. Note that $w$ is integrable on $\D$ since $r>-1$. Let $f(t) = (1-t)^r$ so that $w(z)=f(1-|z|^2)$. For any $s>-1$, let
	\[
		F_s(x) =
		\begin{cases}
			\frac{1}{x} \int_0^x (1-t)^s\,dt & \text{for~} 0<x\le1, \\
			1 & \text{for~} x=0.
		\end{cases}
	\]
	Then $F_s$ is a continuous function from $[0,1]$ to $(0,\infty)$ (since $\int_0^1 t^s\,dt < \infty$), so there exist constants $0<c_s<C_s<\infty$ such that $c_s \le F_s(x) \le C_s$ for all $0 \le x \le 1$.
	
	Proof of $w \in \bigcup_{1<p<\infty} B_p$: If $r<0$, then $\frac{1}{x} \int_0^x f \le C_r$ and $\essinf_{(0,x]}f \ge 1$ for all $x\in(0,1]$, so $f \in A_1$ and, by Lemma \ref{lem_equiv}, $w \in B_1$. If $r>0$, then, for any $p>r+1$, we have $(1-p')r>-1$ and hence
	\[
		\left(\frac{1}{x} \int_0^x f\right) \left(\frac{1}{x} \int_0^x f^{1-p'}\right)^{p-1}
		\le C_r C_{(1-p')r}^{p-1}
	\]
	for all $x\in(0,1]$, so $f \in A_p$ and hence $w \in B_p$. But note that, for $p=r+1$, we have $(1-p')r=-1$ and hence $\int_0^1 f^{1-p'} = \int_0^1 t^{-1}\,dt = \infty$, so $f \notin A_{r+1}$ and hence $w \notin B_{r+1}$.
	
	Proof of $w \in RH$: Fix $q>1$; if $r<0$, choose $q$ close enough to $1$ that $qr>-1$. Then
	\[
		\frac{(\frac{1}{x} \int_0^x f^q)^{1/q}}{\frac{1}{x} \int_0^x f}
		\le \frac{C_{qr}^{1/q}}{c_r}
	\]
	for all $x\in(0,1]$, so $f \in RH$ and, by Lemma \ref{lem_equiv}, $w \in RH$.
	
	Proof of $w \notin AC$: Since $w(z)$ tends to either $0$ or $\infty$ as $z\to0$, the weight $w$ is not almost constant on the top half $T_I = \{|z|<1/2\}$ corresponding to $I=\T$.
\end{example}

\begin{remark}
	\label{rmk_B_p_B_q}
	In Example \ref{ex_centre}, we have shown that, for every $1<p<\infty$, there exists a weight $w$ on $\D$ such that $w \in B_q$ for all $p<q<\infty$ but $w \notin B_p$. Thus, $B_p \subsetneq \bigcap_{p<q<\infty} B_q$ for all $1<p<\infty$. In particular, for any $1<p<q<\infty$, we have $B_p \subsetneq B_q$.
\end{remark}

\section{Further \texorpdfstring{$A_\infty$}{A\_infinity} conditions}
\label{sec_further}

Theorem \ref{thm_equiv_1} establishes the equivalence of a number of $A_\infty$ conditions for weights on $\D$ which are almost constant on top halves. The full version of this theorem (see \cite{APR1}) contains two more $A_\infty$ conditions:

\begin{enumerate}
	\item[(e)] For every $\beta\in(0,1)$, there exists $\alpha\in(0,1)$ such that, for every $I\subset\T$ and every measurable set $E \subset Q_I$,
	\[
		|E|<\alpha |Q_I| \implies w(E)<\beta w(Q_I).
	\]
	\item[(f)] There exists $C>0$ such that, for every $I\subset\T$,
	\[
		\int_{Q_I} w \log\left(e+\frac{w}{w_{Q_I}}\right) \le C \int_{Q_I} w.
	\]
\end{enumerate}

In this section, we establish the precise relationships between these two conditions and \ref{P1} - \ref{P8} for arbitrary weights on $\D$. First, we show that condition (f) is always equivalent to \ref{P6}.

\begin{lemma}
	\label{lem_P6}
	Let $w$ be a weight on a measure space $(X,\mu)$ equipped with a basis $\BB$. Then $w\in\PP6$ if and only if there exists $C>0$ such that, for every $B\in\BB$,
	\[
		\int_B w \log\left(e+\frac{w}{w_B}\right)\,d\mu \le Cw(B).
	\]
\end{lemma}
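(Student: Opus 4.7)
The plan is to reduce the equivalence to an elementary pointwise comparison between the functions $\log(e+x)$ and $\log^+(x)$ on $[0,\infty)$, and then integrate against the measure $w\,d\mu$. Since the defining inequality in \ref{P6} and the inequality in the statement differ only in their integrand (and not in any constant depending on $B$), no measure-theoretic machinery is needed; everything follows once we control these two functions up to additive/multiplicative constants.

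For the direction ($\impliedby$), I would observe that $\log^+(x)\le\log(e+x)$ for every $x\ge 0$: if $x\le 1$ the left side is $0$ and the right side is $\ge\log e=1$; if $x>1$ then $\log^+(x)=\log x\le\log(e+x)$. Multiplying by $w$ and integrating over $B$ gives
\[
\int_B w\log^+\!\bigl(w/w_B\bigr)\,d\mu \le \int_B w\log\bigl(e+w/w_B\bigr)\,d\mu \le C w(B),
\]
which is exactly \ref{P6}.

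For the direction ($\implies$), I would establish the pointwise bound $\log(e+x)\le 2+\log^+(x)$ for all $x\ge 0$. Indeed, $\log(e+x)=1+\log(1+x/e)\le 1+\log(1+x)$, and $\log(1+x)\le 1+\log^+(x)$ (if $x\le 1$ then $\log(1+x)\le\log 2\le 1$; if $x>1$ then $\log(1+x)\le\log(2x)=\log 2+\log x\le 1+\log^+ x$). Applying this with $x=w/w_B$, multiplying by $w$, integrating over $B$ and using $w\in\PP6$ yields
\[
\int_B w\log\bigl(e+w/w_B\bigr)\,d\mu \le 2\int_B w\,d\mu + \int_B w\log^+\!\bigl(w/w_B\bigr)\,d\mu \le (2+C)w(B),
\]
with the constant $C$ furnished by \ref{P6}.

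There is no real obstacle here: the argument is a routine pointwise comparison followed by integration, and the constants do not depend on $B$, so the characterizing constant in the equivalent formulation can be taken as $\max(C,2+C)=2+C$ in one direction and $C$ in the other. The only minor care required is to verify the pointwise bound $\log(e+x)\le 2+\log^+(x)$ cleanly, which I would handle by the two-case split indicated above.
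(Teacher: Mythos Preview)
Your proof is correct and follows essentially the same approach as the paper: both directions reduce to the pointwise comparisons $\log^+(x)\le\log(e+x)$ and $\log(e+x)\le c+\log^+(x)$ on $[0,\infty)$, followed by integration against $w\,d\mu$. The paper derives the second inequality via the subadditivity estimate $\log(1+x+y)\le\log(1+x)+\log(1+y)$ (obtaining the slightly sharper constant $1+\log 2$ in place of your $2$), but the argument is otherwise the same.
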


\begin{proof}
	$(\impliedby)$ Simply note that, for any $x\ge0$,
	\[
		\log^+(x) \le \max(\log x,1) \le \log(e+x).
	\]
	$(\implies)$ From the concavity of $\log$, it can easily be deduced that, for any $x,y\ge0$,
	\begin{equation}
		\label{eq_log}
		\log(1+x+y) \le \log(1+x) + \log(1+y).
	\end{equation}
	Taking $y=e-1$ in \eqref{eq_log}, we get $\log(e+x) \le 1 + \log(1+x)$ for all $x\ge0$. Hence, for any $B\in\BB$,
	\[
		\int_B w \log\left(e+\frac{w}{w_B}\right)
		\le \int_B w + \int_B w \log\left(1+\frac{w}{w_B}\right).
	\]
	Taking $y=1$ in \eqref{eq_log}, we get $\log(1+x) \le \log 2 + \log x$ for all $x\ge1$. Hence, for any $B\in\BB$,
	\begin{align*}
		\int_B w \log\left(1+\frac{w}{w_B}\right)
		&\le \int_{B\cap\{w\le w_B\}} w \log2 + \int_{B\cap\{w>w_B\}} w \left(\log2+\log\left(\frac{w}{w_B}\right)\right) \\
		&= (\log2) \int_B w + \int_B w \log^+\left(\frac{w}{w_B}\right). \qedhere
	\end{align*}
\end{proof}

Now, we turn our attention to condition (e).

\begin{definition}
	\label{def_P4ab}
	Let $w$ be a weight on a measure space $(X,\mu)$ equipped with a basis $\BB$. We define the following two properties, which are both variations on \ref{P4}:
	\begin{enumerate}
		\item[\namedlabel{P4a}{\bfseries(P4a)}] For every $\beta\in(0,1)$, there exists $\alpha\in(0,1)$ such that, for every $B\in\BB$ and every measurable set $E \subset B$,
		\[
			\mu(E)<\alpha\mu(B) \implies w(E)<\beta w(B).
		\]
		\item[\namedlabel{P4b}{\bfseries(P4b)}] For every $\alpha\in(0,1)$, there exists $\beta\in(0,1)$ such that, for every $B\in\BB$ and every measurable set $E \subset B$,
		\[
			\mu(E)<\alpha\mu(B) \implies w(E)<\beta w(B).
		\]
	\end{enumerate}
\end{definition}

Note that \ref{P4a} is precisely the generalization of condition (e) to arbitrary measure spaces. We are also interested in the similar property \ref{P4b}, which appears in \cite{GR}*{Section IV.2} as a step in the proof of the reverse H\"older inequality for $A_p$ weights on $\R^n$.

It is easy to see that \ref{P4a} lies between \ref{P3} and \ref{P4}, and \ref{P4b} lies between \ref{P1} and \ref{P4}. (Recall that \ref{P1} and \ref{P3} are equivalent to \ref{P1'} and \ref{P3'}, respectively, and see Lemma \ref{lem_P4} below.) However, this observation is not enough to determine precisely where in Figure \ref{fig_map_2} properties \ref{P4a} and \ref{P4b} belong. It turns out that \ref{P4a} lies between \ref{P6} and \ref{P4}, and \ref{P4b} lies between \ref{P2} and \ref{P5} (see Figure \ref{fig_map_P4}), as we now show.

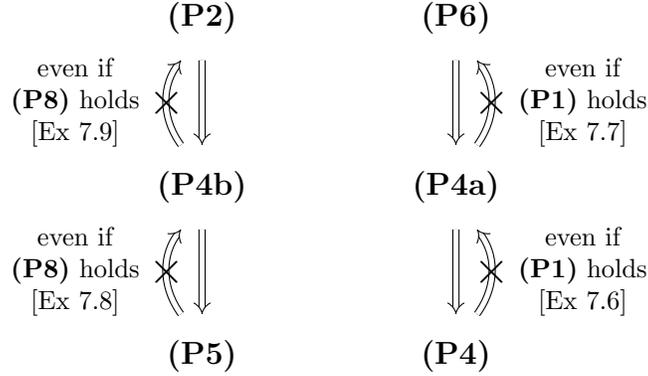
\begin{figure}
	\begin{tikzpicture}[scale=1.125]
		\def \cross {\Large $\times$};
		\tikzset{arrow/.style={-implies, double equal sign distance}};
		\tikzset{txt/.style={font=\Small, align=center}};
		
		\draw (2,8) node {\ref{P2}};
		\draw (2,6) node {\ref{P4b}};
		\draw (2,4) node {\ref{P5}};
		\draw (5,8) node {\ref{P6}};
		\draw (5,6) node {\ref{P4a}};
		\draw (5,4) node {\ref{P4}};
		
		\draw[arrow] (2,7.5) to (2,6.5);
		\draw[arrow] (2,5.5) to (2,4.5);
		\draw[arrow] (5,7.5) to (5,6.5);
		\draw[arrow] (5,5.5) to (5,4.5);
		
		\draw[arrow] (1.75,6.5) to [bend left=36pt] node {\cross}
		node[txt, left=6pt] {even if \\ \ref{P8} holds \\ {[Ex \ref{ex_P4_4}]}} (1.75,7.5);
		\draw[arrow] (1.75,4.5) to [bend left=36pt] node {\cross}
		node[txt, left=6pt] {even if \\ \ref{P8} holds \\ {[Ex \ref{ex_P4_3}]}} (1.75,5.5);
		\draw[arrow] (5.25,6.5) to [bend right=36pt] node {\cross}
		node[txt, right=6pt] {even if \\ \ref{P1} holds \\ {[Ex \ref{ex_P4_2}]}} (5.25,7.5);
		\draw[arrow] (5.25,4.5) to [bend right=36pt] node {\cross}
		node[txt, right=6pt] {even if \\ \ref{P1} holds \\ {[Ex \ref{ex_P4_1}]}} (5.25,5.5);
	\end{tikzpicture}
	\caption{Relationships between \ref{P4a} and \ref{P1} - \ref{P8}, and between \ref{P4b} and \ref{P1} - \ref{P8}, for arbitrary weights on the unit disc.}
	\label{fig_map_P4}
\end{figure}

The proof of the following lemma is based on known ideas (see e.g.\ \cite{G}*{Theorem 7.3.3}); we include it for the convenience of the reader.

\begin{lemma}
	\label{lem_P4}
	Let $w$ be a weight on a measure space $(X,\mu)$ equipped with a basis $\BB$.
	\begin{enumerate}[label=\upshape(\alph*)]
		\item $w\in\PP4$ if and only if there exist $\alpha,\beta\in(0,1)$ such that, for every $B\in\BB$ and every measurable set $E \subset B$,
		\begin{equation}
			\label{eq_P4_alt}
			w(E) < \alpha w(B) \implies \mu(E) < \beta \mu(B).
		\end{equation}
		\item $w\in\PP{4a}$ if and only if, for every $\alpha\in(0,1)$, there exists $\beta\in(0,1)$ such that, for every $B\in\BB$ and every measurable set $E \subset B$, \eqref{eq_P4_alt} holds.
		\item $w\in\PP{4b}$ if and only if, for every $\beta\in(0,1)$, there exists $\alpha\in(0,1)$ such that, for every $B\in\BB$ and every measurable set $E \subset B$, \eqref{eq_P4_alt} holds.
	\end{enumerate}
\end{lemma}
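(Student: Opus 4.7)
The plan is to prove all three parts by a single complementation argument. Fix $\alpha,\beta\in(0,1)$. For any $B\in\BB$ and measurable $E\subset B$, set $F=B\setminus E$, so that $\mu(F)=\mu(B)-\mu(E)$ and $w(F)=w(B)-w(E)$. Applying the implication ``$\mu(E)<\alpha\mu(B)\implies w(E)<\beta w(B)$'' to $F$ in place of $E$ and then taking the contrapositive, one finds that this implication (asserted for all $E\subset B$ and all $B\in\BB$) is equivalent to
\[
w(E)\le(1-\beta)w(B)\implies\mu(E)\le(1-\alpha)\mu(B),\qquad\text{for all }E\subset B,\;B\in\BB.
\]
This identity, together with its mirror image obtained by interchanging the roles of $\mu$ and $w$, will be the engine of the proof.

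To bridge the non-strict form above and the strict form \eqref{eq_P4_alt}, I will use the positive slack available in the parameters. If $w(E)<\alpha'w(B)$ for some $\alpha'\le1-\beta$, then $w(E)\le(1-\beta)w(B)$, so the non-strict implication gives $\mu(E)\le(1-\alpha)\mu(B)<\beta'\mu(B)$ for any $\beta'>1-\alpha$, in particular for $\beta'=1-\alpha/2$. Consequently, the original implication with parameters $(\alpha,\beta)$ yields \eqref{eq_P4_alt} with parameters $(1-\beta,\,1-\alpha/2)$, and symmetrically the reverse passage from \eqref{eq_P4_alt} with $(\alpha',\beta')$ yields the original implication with parameters $(1-\beta',\,1-\alpha'/2)$.

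Part (a) is now immediate, since no quantifier chasing is needed. For parts (b) and (c), I will track the quantifier patterns, noting that in both the original form and the swapped form \eqref{eq_P4_alt}, the pattern of \ref{P4a} reads ``for every parameter on the $w$-side of the implication, there exists one on the $\mu$-side'' (since the $w$-side is the conclusion in the original but the hypothesis in the swapped version), and analogously for \ref{P4b}. For (b), given a target $\alpha'\in(0,1)$ for the swapped form, I choose $\beta=1-\alpha'$ in \ref{P4a} to obtain $\alpha$, and return $\beta'=1-\alpha/2$; conversely, given a target $\beta$ for \ref{P4a}, I choose $\alpha'=1-\beta/2$ (leaving genuine room since $\alpha'>1-\beta$) in the swapped form to obtain $\beta'$, and return $\alpha=1-\beta'$, noting that then $w(E)\le(1-\alpha')w(B)=(\beta/2)w(B)<\beta w(B)$. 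Part (c) is proved in the mirror-image fashion, with the roles of the two parameters swapped throughout.

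The main obstacle is bookkeeping: ensuring that every parameter produced lies in $(0,1)$ and that the strict-to-non-strict transitions do not degenerate. Both concerns are handled uniformly by the halving trick $1-\alpha\mapsto1-\alpha/2$, which suffices because $\alpha,\beta\in(0,1)$ and $0<\mu(B),w(B)<\infty$ (the former by the definition of a basis, the latter by the integrability hypothesis on $w$ under which \ref{P4}, \ref{P4a}, \ref{P4b} are defined).
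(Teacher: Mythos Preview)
Your proof is correct and follows essentially the same approach as the paper's: both use the complementation $F=B\setminus E$ together with the contrapositive to pass between the strict $\mu$-first implication and the non-strict $w$-first one, and then absorb the strict/non-strict mismatch with slack in the parameters. The paper proves only part (a) in detail (choosing generic $\tilde\alpha<1-\beta$ and $\tilde\beta>1-\alpha$ rather than your explicit $1-\alpha/2$) and simply asserts that (b) and (c) are similar, whereas you carry out the quantifier bookkeeping for (b) explicitly; this is a fine elaboration rather than a different method.
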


\begin{proof}
	We prove (a) only; the proofs of (b) and (c) are similar. Suppose $w\in\PP4$. Let $\alpha$ and $\beta$ be as in $\PP4$. Given $B\in\BB$ and a measurable set $E \subset B$, let $F = B \setminus E$. Then $F$ is a measurable subset of $B$, so
	\[
		\mu(F) < \alpha \mu(B) \implies w(F) < \beta w(B).
	\]
	Since $\mu(F) = \mu(B) - \mu(E)$ and $w(F) = w(B) - w(E)$, this can be rewritten as
	\[
		\mu(E) > (1-\alpha) \mu(B) \implies w(E) > (1-\beta) w(B).
	\]
	The contrapositive statement is
	\[
		w(E) \le (1-\beta) w(B) \implies \mu(E) \le (1-\alpha) \mu(B).
	\]
	Choose $0<\tilde{\alpha}<1-\beta$ and $1-\alpha<\tilde{\beta}<1$. Then \eqref{eq_P4_alt} holds with $\alpha$ and $\beta$ replaced by $\tilde{\alpha}$ and $\tilde{\beta}$, respectively. The converse can be proved similarly.
\end{proof}

\begin{proposition}
	\label{prop_P4}
	For weights on a measure space equipped with a basis, the following implications hold:
	\begin{enumerate}[label=\upshape(\alph*)]
		\item $\PP6 \implies \PP{4a} \implies \PP4$.
		\item $\PP2 \implies \PP{4b} \implies \PP5$.
	\end{enumerate}
\end{proposition}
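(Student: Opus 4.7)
The plan is to dispatch the four implications in increasing order of difficulty. The implication $\PP{4a} \implies \PP4$ is immediate: fix any $\beta \in (0,1)$ and take the corresponding $\alpha$ from $\PP{4a}$. For $\PP{4b} \implies \PP5$, apply $\PP{4b}$ with $\alpha = 3/4$ to the set $E = \{x \in B : w(x) > m(w;B)\}$, whose measure is at most $\mu(B)/2$ by definition of the median. This yields some $\beta \in (0,1)$ with $w(E) < \beta w(B)$, hence $w(B \setminus E) > (1-\beta)w(B)$. Since $w \le m(w;B)$ on $B \setminus E$, we also have $w(B\setminus E) \le m(w;B)\mu(B)$, and combining the two bounds gives $w_B \le (1-\beta)^{-1} m(w;B)$.

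For $\PP6 \implies \PP{4a}$, the standard level-set truncation works. For any $B \in \BB$, $E \subset B$, and $\lambda > 1$, split $\int_E w\,d\mu$ according to whether $w \le \lambda w_B$ or $w > \lambda w_B$. The small part is at most $\lambda w_B \mu(E)$; on the large part, $\log(w/w_B) > \log\lambda$, so $\PP6$ gives the bound $\frac{C}{\log\lambda} w(B)$. The resulting two-parameter inequality
\[
    \frac{w(E)}{w(B)} \le \lambda\,\frac{\mu(E)}{\mu(B)} + \frac{C}{\log\lambda}
\]
yields $\PP{4a}$: given $\beta \in (0,1)$, first choose $\lambda$ large enough that the second term is below $\beta/2$, then choose $\alpha = \beta/(2\lambda)$.

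The main obstacle is $\PP2 \implies \PP{4b}$. Given $\alpha \in (0,1)$ and $E \subset B$ with $0 < \mu(E) < \alpha\mu(B)$ (the case $\mu(E)=0$ is trivial), set $F = B \setminus E$, $t = \mu(E)/\mu(B) \in (0,\alpha)$, and $u = w(F)/w(B) \in (0,1]$; it suffices to bound $u$ below by a constant $\beta'$ depending only on $\alpha$ and the $\PP2$-constant $C$, since then $\PP{4b}$ holds with $\beta = 1 - \beta'$. Splitting the log-average across $E$ and $F$ and applying Jensen's inequality on each piece gives
\[
    \frac{1}{\mu(B)}\int_B \log w\,d\mu
    \le t\log\frac{w(E)}{\mu(E)} + (1-t)\log\frac{w(F)}{\mu(F)}
    = \log w_B + t\log\frac{1-u}{t} + (1-t)\log\frac{u}{1-t}.
\]
Combining this with the reverse bound $\log w_B - \log C \le \frac{1}{\mu(B)}\int_B \log w\,d\mu$ supplied by $\PP2$, the term $\log w_B$ cancels and we are left with
\[
    -\log C \le t\log(1-u) + (1-t)\log u - t\log t - (1-t)\log(1-t).
\]
The binary entropy satisfies $-t\log t - (1-t)\log(1-t) \le \log 2$, and $t\log(1-u) \le 0$, so $(1-t)\log u \ge -\log(2C)$. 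Since $1-t > 1-\alpha$, this forces $u \ge (2C)^{-1/(1-\alpha)} =: \beta' > 0$, completing the proof. The difficulty in this final step is the need to extract a quantitative bound from the reverse Jensen property; the key observation is that the two applications of ordinary Jensen on $E$ and $F$ combine cleanly with $\PP2$, with the $\log w_B$ terms canceling to leave a clean inequality in $t$ and $u$ alone.
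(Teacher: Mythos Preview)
Your proof is correct and rests on the same ideas as the paper's. For $\PP{4a}\Rightarrow\PP4$ and $\PP{4b}\Rightarrow\PP5$ your arguments coincide with the paper's verbatim. For $\PP6\Rightarrow\PP{4a}$ the paper merely points to the proof of $\PP6\Rightarrow\PP4$ in \cite{DMO1}, while you write out the standard level-set truncation explicitly; this is almost certainly the same argument. For $\PP2\Rightarrow\PP{4b}$ there is a small route difference: the paper quotes Hru\v{s}\v{c}ev's inequality $\tfrac{\mu(E)}{\mu(B)}\log\bigl(\tfrac{w(B)}{w(E)}-1\bigr)\le\log(2C)$ from \cite{H} and then passes through the dual characterization of $\PP{4b}$ in Lemma~\ref{lem_P4}(c), whereas you apply Jensen on $E$ and $B\setminus E$ directly, cancel the $\log w_B$ terms, and bound $u=w(B\setminus E)/w(B)$ from below, obtaining $\PP{4b}$ in its original form without the lemma. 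Both arguments are the same Jensen-on-pieces computation underneath; yours is a bit more self-contained, while the paper's citation-based version is shorter on the page.
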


\begin{proof}
	(a) The proof of $\PP6 \implies \PP4$ in Theorem 4.1 in \cite{DMO1} can easily be modified to yield a proof of the first implication. The second implication is trivial.
	
	(b) First, suppose $w\in\PP2$. Let $C$ be as in $\PP2$. As in the proof of Theorem 1 in \cite{H}, it can be shown that, for any $B\in\BB$ and any $E \subset B$ with $0<\mu(E)<\mu(B)$, we have
	\[
		\frac{\mu(E)}{\mu(B)} \log\left(\frac{w(B)}{w(E)}-1\right)
		\le \log(2C).
	\]
	Given $\beta\in(0,1)$, choose $\alpha\in(0,1/2)$ small enough that $\beta \log((1/\alpha)-1) > \log(2C)$. Then \eqref{eq_P4_alt} holds for all $B\in\BB$ and all $E \subset B$, so $w\in\PP{4b}$ by Lemma \ref{lem_P4}.
	
	Now, suppose $w\in\PP{4b}$. For $\alpha=3/4$, choose $\beta$ such that the implication in \ref{P4b} holds. Given $B\in\BB$, let $m=m(w;B)$. By the definition of the median, $\mu(B \cap \{w>m\}) < \alpha \mu(B)$, so $w(B \cap \{w>m\}) < \beta w(B)$. Hence $(1-\beta)w(B) < w(B \cap \{w \le m\}) \le m \mu(B)$, so $w_B \le m/(1-\beta)$. Thus, $w\in\PP5$.
\end{proof}

The reader probably wonders if any of the implications in Proposition \ref{prop_P4} can be reversed. It turns out that all four reverse implications are false. Moreover, all four counterexamples can be taken to be weights on $\D$. To prove this, we need the following lemma, whose proof is essentially the same as that of Lemma \ref{lem_equiv} (with $(\star)=\PP4$) and will therefore be omitted.

\begin{lemma}
	\label{lem_equiv_P4}
	Let $w$ and $f$ be as in Lemma \textup{\ref{lem_dict}}, i.e.\ $w(z)=f(1-|z|^2)$. Then, for $(\star)=\PP{4a}$ and $(\star)=\PP{4b}$, we have $w \in (\star) \iff f \in (\star)$.
\end{lemma}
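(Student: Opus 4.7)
The plan is to adapt the proof of Lemma \ref{lem_equiv} in the case $(\star)=\PP4$, which already establishes the key fact that the constants $\alpha$ and $\beta$ transfer \emph{unchanged} between $w$ and $f$. More precisely, that proof shows the following: for any fixed pair $(\alpha,\beta)\in(0,1)^2$, the implication ``$|A|<\alpha|Q_I|\implies w(A)<\beta w(Q_I)$ for every $I\subset\T$ and every measurable $A\subset Q_I$'' is equivalent to ``$|E|<\alpha|(0,x]|\implies f(E)<\beta f((0,x])$ for every $x\in(0,1]$ and every measurable $E\subset(0,x]$''. Since \PP{4a} and \PP{4b} differ from \PP4 only in the order of the existential/universal quantifiers over $(\alpha,\beta)$, this one-to-one correspondence of pairs transfers both properties immediately.

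For concreteness, to prove $w\in\PP{4a}\Leftrightarrow f\in\PP{4a}$, I would fix $\beta\in(0,1)$. For the forward direction, let $\alpha\in(0,1)$ be supplied by \PP{4a} for $w$; given $x\in(0,1]$ and a measurable set $E\subset(0,x]$ with $|E|<\alpha|(0,x]|$, choose $I\subset\T$ with $|I|=1-\sqrt{1-x}$ and set $A=\{z\in Q_I:1-|z|^2\in E\}$. Lemma \ref{lem_dict}(f) gives $|A|<\alpha|Q_I|$, so $w(A)<\beta w(Q_I)$, and Lemma \ref{lem_dict}(f) again yields $f(E)<\beta f((0,x])$. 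For the converse, let $\alpha$ be the constant supplied by \PP{4a} for $f$ at this $\beta$; given $I\subset\T$ and $A\subset Q_I$ with $|A|<\alpha|Q_I|$, I would construct the radial rearrangement $A'\subset Q_I$ exactly as in the converse part of the proof of Lemma \ref{lem_equiv}: choose $E\subset(0,x]$ with $|E|=|A|/|I|$ and $\inf_E f\ge\sup_{(0,x]\setminus E}f$, and set $A'=\{z\in Q_I:1-|z|^2\in E\}$, so that $|A'|=|A|$ and $w(A')\ge w(A)$. Then $|E|<\alpha|(0,x]|$, hence $f(E)<\beta f((0,x])$, hence $w(A)\le w(A')<\beta w(Q_I)$.

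The identical argument, with the phrases ``for every $\beta$'' and ``for every $\alpha$'' interchanged in the outer quantifier, handles \PP{4b}. There is no substantial obstacle: all the geometric content already lives in the proof of Lemma \ref{lem_equiv}, and the only new observation is that the constants pass between the two weights faithfully, so any quantifier pattern placed in front of the implication is preserved by the translation.
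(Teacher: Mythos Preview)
Your proposal is correct and follows exactly the approach the paper intends: the paper omits the proof, stating that it ``is essentially the same as that of Lemma \ref{lem_equiv} (with $(\star)=\PP4$),'' and your argument spells out precisely this, observing that the pair $(\alpha,\beta)$ transfers unchanged between $w$ and $f$ so that any quantifier pattern over $(\alpha,\beta)$ is preserved.
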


In each of the examples below, for $n\ge0$, we define $E_n = \bigsqcup_{k=n}^\infty (2^{-k-1},2^{-k-1}(1+a_k)]$, so that $E_n \subset (0,2^{-n}]$ with $|E_n| = \sum_{k=n}^\infty 2^{-k-1}a_k$ and $f(E_n) = \sum_{k=n}^\infty 2^{-k-1}a_kb_k$, where $f$ is the function defined in Example \ref{ex_main}.

\begin{example}[$w\in\PP1\cap\PP4\setminus\PP{4a}$]
	\label{ex_P4_1}
	Let $w$ be as in Example \ref{ex_1}. Then $w\in\PP1$ and hence $w\in\PP4$.
	
	Proof of $w\notin\PP{4a}$: Note that $|E_n| \le 2^{-n}a_n$ (since $a_k \searrow 0$) and $f(E_n) = 2^{-n}$ (since $a_kb_k=1$). Also, $f((0,2^{-n}] \setminus E_n) = |(0,2^{-n}] \setminus E_n| \le 2^{-n}$, so $f((0,2^{-n}]) \le 2^{1-n}$. Let $\beta=1/4$. Given $\alpha\in(0,1)$, choose $n$ such that $a_n<\alpha$. Then $|E_n|<\alpha|(0,2^{-n}]|$, but $f(E_n)>\beta f((0,2^{-n}])$, so $f\notin\PP{4a}$. By Lemma \ref{lem_equiv_P4}, $w\notin\PP{4a}$.
\end{example}

\begin{example}[$w\in\PP1\cap\PP{4a}\setminus\PP6$]
	\label{ex_P4_2}
	In Example \ref{ex_main}, let $b_0 \ge e$, $b_k \nearrow \infty$, and $a_k = 1/(b_k(1+\log\log{b_k}))$. As in Example \ref{ex_1}, $w\in\PP1$ and $w\notin\PP6$.
	
	Proof of $w\in\PP{4a}$: Note that $|E_n| \ge 2^{-n-1}a_n$ and $f(E_n) \le 2^{-n}a_nb_n$ (since $a_kb_k\searrow0$). Given $\beta\in(0,1)$, first choose $N$ such that $a_{N+1}b_{N+1}<\beta/4$ and then choose $0<\alpha<\min(2^{-N-2}a_{N+1},\beta/4)$. Let $x\in(0,1]$ and $E \subset (0,x]$ be given. Then $2^{-n-1}<x\le 2^{-n}$ for some $n\ge0$. Since $f\ge1$, we have $f((0,x]) \ge x$. Suppose $|E|<\alpha |(0,x]|$.
	
	Case 1: Suppose $n \le N$. Then $|E| \le 2^{-N-2}a_{N+1} \le |E_{N+1}|$. Since $f(s) \ge f(t)$ for $s \in E_{N+1}$ and $t \notin E_{N+1}$, this implies that
	\[
		f(E) \le f(E_{N+1})
		\le 2^{-n-1}a_{N+1}b_{N+1}
		< \beta x
		\le \beta f((0,x]).
	\]
	
	Case 2: Suppose $n \ge N+1$. Since $f\ge1$ on $E_n$ and $f=1$ on $(0,2^{-n}] \setminus E_n$, and since $|E| \le \alpha 2^{-n}$, we have
	\[
		f(E) \le f(E_n) + \alpha 2^{-n}
		\le 2^{-n}a_{N+1}b_{N+1} + \alpha 2^{-n}
		< 2^{-n-1} \beta
		\le \beta f((0,x]).
	\]
	
	In either case, $f(E) < \beta f((0,x])$. Thus, $f\in\PP{4a}$. By Lemma \ref{lem_equiv_P4}, $w\in\PP{4a}$.
\end{example}

\begin{example}[$w\in\PP8\cap\PP5\setminus\PP{4b}$]
	\label{ex_P4_3}
	In Example \ref{ex_main}, let $b_0\le1$, $b_k\searrow0$, and $a_k=1/4$. As in Example \ref{ex_3}, $w\in\PP8$ and $w\in\PP5$.
	
	Proof of $w\notin\PP{4b}$: Note that $|E_n| = 2^{-n-2}$ and $f(E_n) \le 2^{-n-2}b_n$. Also, $f((0,2^{-n}]) \ge f((0,2^{-n}] \setminus E_n) = |(0,2^{-n}] \setminus E_n| = 3\cdot2^{-n-2}$. Let $\alpha=7/8$. Given $\beta\in(0,1)$, choose $n$ such that $b_n<3(1-\beta)$. Then $|E_n| > (1-\alpha)|(0,2^{-n}]|$, but $f(E_n) < (1-\beta) f((0,2^{-n}])$. Let $F_n = (0,2^{-n}] \setminus E_n$. Then $|F_n| < \alpha|(0,2^{-n}]|$, but $f(F_n) > \beta f((0,2^{-n}])$. Thus, $f\notin\PP{4b}$. By Lemma \ref{lem_equiv_P4}, $w\notin\PP{4b}$.
\end{example}

\begin{example}[$w\in\PP8\cap\PP{4b}\setminus\PP2$]
	\label{ex_P4_4}
	Let $w$ be as in Example \ref{ex_3}. Then $w\in\PP8$ and $w\notin\PP2$.
	
	Proof of $w\in\PP{4b}$: Note that $|E_n| \le 2^{-n}a_n$ (since $a_k\searrow0$) and $f(E_n) \ge 2^{-n-1}a_nb_n$. Given $\beta\in(0,1)$, choose $N$ such that $a_N < \beta/4$ and choose $0<\alpha<2^{-N-1}a_Nb_N$. Let $x\in(0,1]$ and $E \subset (0,x]$ be given. Then $2^{-n-1} < x \le 2^{-n}$ for some $n\ge0$. Since $f\le1$, we have $f((0,x]) \le x$. Suppose $|E| \ge \beta |(0,x]|$.
	
	Case 1: Suppose $n \le N$. Then $|E| \ge 2^{-n-1} \beta \ge 2^{-N}a_N \ge |E_N|$. Since $f(s) \le f(t)$ for $s \in E_N$ and $t \notin E_N$, this implies that
	\[
		f(E) \ge f(E_N)
		\ge 2^{-N-1}a_Nb_N
		\ge \alpha
		\ge \alpha f((0,x]).
	\]
	
	Case 2: Suppose $n \ge N+1$. Then $|E| \ge 2^{1-n}a_N \ge |E_n| + 2^{-n}a_N$. Since $f\le1$ on $E_n$ and $f=1$ on $(0,2^{-n}] \setminus E_n$, we have
	\[
		f(E) \ge f(E_n) + 2^{-n}a_N
		\ge 2^{-n}a_N
		\ge 2^{-n}\alpha
		\ge \alpha f((0,x]).
	\]
	
	In either case, $f(E) \ge \alpha f((0,x])$. Thus, $f(E)<\alpha f((0,x]) \implies |E|<\beta|(0,x]|$. By Lemma \ref{lem_P4}, $f\in\PP{4b}$. By Lemma \ref{lem_equiv_P4}, $w\in\PP{4b}$.
\end{example}

Together, Theorem \ref{thm_equiv_2} and Proposition \ref{prop_P4} imply that \ref{P1} - \ref{P8}, \ref{P4a} and \ref{P4b} are equivalent for weights on $\D$ which are almost constant on top halves. Comparing this result with Examples \ref{ex_P4_1} - \ref{ex_P4_4}, we see once again that weights which are almost constant on top halves are much more well-behaved than weights which are not. Furthermore, in each of these examples, the sequence $(b_k)$ can be taken to tend to $0$ or $\infty$ arbitrarily slowly, so being almost constant on top halves is indeed the sharp condition for this good behaviour.

\section{Acknowledgements}

I would like to sincerely thank my supervisor Prof.\ Ignacio Uriarte-Tuero for suggesting the topic and for his support and guidance throughout this research project. I am also grateful to Prof.\ Alberto Dayan, Prof.\ Adri\'an Llinares, and Prof.\ Karl-Mikael Perfekt for permitting me to use an adapted version of their diagram of a Carleson square. In addition, I thank the anonymous referee for his/her comments and suggestions.

This research was partially supported by an NSERC Undergraduate Student Research Award.

\begin{bibsection}
\begin{biblist}

\bib{APR1}{article}{
	author={Aleman, Alexandru},
	author={Pott, Sandra},
	author={Reguera, Mar\'ia Carmen},
	title={Characterizations of a limiting class $B_\infty$ of B\'ekoll\'e-Bonami weights},
	journal={Rev. Mat. Iberoam.},
	volume={35},
	date={2019},
	number={6},
	pages={1677--1692},
}

\bib{APR2}{article}{
	author={Aleman, Alexandru},
	author={Pott, Sandra},
	author={Reguera, Mar\'ia Carmen},
	title={Sarason conjecture on the Bergman space},
	journal={Int. Math. Res. Not.},
	volume={2017},
	number={14},
	pages={4320--4349},
}

\bib{BB}{article}{
	author={B\'ekoll\'e, David},
	author={Bonami, Aline},
	title={In\'egalit\'es \`a poids pour le noyau de Bergman},
	language={French},
	journal={C. R. Acad. Sci. Paris S\'er. A},
	volume={286},
	date={1978},
	number={18},
	pages={775--778},
}

\bib{B}{article}{
	author={Borichev, Alexander},
	title={On the B\'ekoll\'e-Bonami condition},
	journal={Math. Ann.},
	volume={328},
	date={2004},
	number={3},
	pages={389--398},
}

\bib{CF}{article}{
	author={Coifman, Ronald R.},
	author={Fefferman, Charles},
	title={Weighted norm inequalities for maximal functions and singular integrals},
	journal={Studia Math.},
	volume={51},
	date={1974},
	pages={241--250},
}

\bib{DLP}{article}{
	author={Dayan, Alberto},
	author={Llinares, Adri\'an},
	author={Perfekt, Karl-Mikael},
	title={Restrictions of B\'ekoll\'e-Bonami weights and Bloch functions},
	journal={arXiv preprint},
	date={2023},
	eprint={https://arxiv.org/abs/2308.04859},
}

\bib{D}{book}{
	author={Duoandikoetxea, Javier},
	title={Fourier analysis},
	series={Graduate Studies in Mathematics},
	volume={29},
	publisher={American Mathematical Society, Providence},
	date={2001},
}

\bib{DMO2}{article}{
	author={Duoandikoetxea, Javier},
	author={Mart\'in-Reyes, Francisco J.},
	author={Ombrosi, Sheldy},
	title={Calder\'on weights as Muckenhoupt weights},
	journal={Indiana Univ. Math. J.},
	volume={62},
	date={2013},
	number={3},
	pages={891--910},
}

\bib{DMO1}{article}{
	author={Duoandikoetxea, Javier},
	author={Mart\'in-Reyes, Francisco J.},
	author={Ombrosi, Sheldy},
	title={On the $A_\infty$ conditions for general bases},
	journal={Math. Z.},
	volume={282},
	date={2016},
	number={3-4},
	pages={955--972},
}

\bib{F}{article}{
	author={Fujii, Nobuhiko},
	title={Weighted bounded mean oscillation and singular integrals},
	journal={Math. Japon.},
	volume={22},
	date={1977/78},
	number={5},
	pages={529--534},
}

\bib{GR}{book}{
	author={Garc\'ia-Cuerva, Jos\'e},
	author={Rubio de Francia, Jos\'e L.},
	title={Weighted norm inequalities and related topics},
	series={North-Holland Mathematics Studies},
	volume={116},
	publisher={North-Holland Publishing Company, Amsterdam},
	date={1985},
}

\bib{G}{book}{
	author={Grafakos, Loukas},
	title={Classical Fourier analysis},
	series={Graduate Texts in Mathematics},
	volume={249},
	edition={3},
	publisher={Springer, New York},
	date={2014},
}

\bib{HKZ}{book}{
	author={Hedenmalm, Haakan},
	author={Korenblum, Boris},
	author={Zhu, Kehe},
	title={Theory of Bergman spaces},
	series={Graduate Texts in Mathematics},
	volume={199},
	publisher={Springer-Verlag, New York},
	date={2000},
}

\bib{H}{article}{
	author={Hru\v s\v cev, Sergei V.},
	title={A description of weights satisfying the $A_{\infty}$ condition of Muckenhoupt},
	journal={Proc. Amer. Math. Soc.},
	volume={90},
	date={1984},
	number={2},
	pages={253--257},
}

\bib{HMW}{article}{
	author={Hunt, Richard},
	author={Muckenhoupt, Benjamin},
	author={Wheeden, Richard},
	title={Weighted norm inequalities for the conjugate function and Hilbert transform},
	journal={Trans. Amer. Math. Soc.},
	volume={176},
	date={1973},
	pages={227--251},
}

\bib{K}{article}{
	author={Kosz, Dariusz},
	title={$A_\infty$ condition for general bases revisited: complete classification of definitions},
	journal={Proc. Amer. Math. Soc.},
	volume={150},
	date={2022},
	number={9},
	pages={3831--3839},
}

\bib{LN}{article}{
	author={Limani, Adem},
	author={Nicolau, Artur},
	title={Bloch functions and B\'ekoll\'e-Bonami weights},
	journal={Indiana Univ. Math. J.},
	volume={72},
	date={2023},
	number={2},
	pages={381--407},
}

\bib{M}{article}{
	author={Muckenhoupt, Benjamin},
	title={The equivalence of two conditions for weight functions},
	journal={Studia Math.},
	volume={49},
	date={1973/74},
	pages={101--106},
}

\bib{MP}{article}{
	author={Mudarra, Carlos},
	author={Perfekt, Karl-Mikael},
	title={Characterizations for arbitrary B\'ekoll\'e-Bonami weights},
	journal={arXiv preprint},
	date={2025},
	eprint={https://arxiv.org/abs/2506.05993},
}

\bib{ST}{book}{
	author={Str\"omberg, Jan-Olov},
	author={Torchinsky, Alberto},
	title={Weighted Hardy spaces},
	series={Lecture Notes in Mathematics},
	volume={1381},
	publisher={Springer-Verlag, Berlin},
	date={1989},
}

\bib{W}{article}{
	author={Wilson, J. Michael},
	title={Weighted inequalities for the dyadic square function without dyadic $A_\infty$},
	journal={Duke Math. J.},
	volume={55},
	date={1987},
	number={1},
	pages={19--50},
}

\end{biblist}
\end{bibsection}

\end{document}